\documentclass[reqno]{amsart}
\usepackage{latexsym,amsmath,amssymb,amscd}
\usepackage[all]{xy}
\usepackage{tikz-cd}
\usepackage{enumerate}
\usepackage{hyperref} 
\usepackage{mathrsfs}
\usepackage{amsfonts}
\usepackage{setspace}
\usepackage{microtype}

\makeatletter
\@addtoreset{figure}{section}
\def\thefigure{\thesection.\@arabic\c@figure}\def\fps@figure{h,t}
\@addtoreset{table}{bsection}

\def\thetable{\thesection.\@arabic\c@table}
\def\fps@table{h, t}
\@addtoreset{equation}{section}

\makeatother

\makeatletter
\newcommand\@dotsep{4.5}
\def\@tocline#1#2#3#4#5#6#7{\relax
	\ifnum #1>\c@tocdepth 
	\else
	\par \addpenalty\@secpenalty\addvspace{#2}%
	\begingroup \hyphenpenalty\@M
	\@ifempty{#4}{%
		\@tempdima\csname r@tocindent\number#1\endcsname\relax
	}{%
		\@tempdima#4\relax
	}%
	\parindent\z@ \leftskip#3\relax \advance\leftskip\@tempdima\relax
	\rightskip\@pnumwidth plus1em \parfillskip-\@pnumwidth
	#5\leavevmode\hskip-\@tempdima #6\relax
	\leaders\hbox{$\m@th
		\mkern \@dotsep mu\hbox{.}\mkern \@dotsep mu$}\hfill
	\hbox to\@pnumwidth{\@tocpagenum{#7}}\par
	\nobreak
	\endgroup
	\fi}
\makeatother 

\newtheorem{theorem}{Theorem}
\newtheorem{corollary}[theorem]{Corollary}
\newtheorem{definition}[theorem]{Definition}
\newtheorem{example}[theorem]{Example}

\newtheorem{lemma}[theorem]{Lemma}

\newtheorem{proposition}[theorem]{Proposition}
\newtheorem{remark}[theorem]{Remark}

\numberwithin{theorem}{section}
\numberwithin{equation}{section}

\title[]{The $\mathcal{R}$-Shilov boundary for a local operator space}

\author{
  Maria Joi\c ta$^{1}$ \and Gheorghe-Ionu\c{t} \c{S}imon$^{1,*}$ \\
  {\tiny $^1$Department of Mathematics, Faculty of Applied Sciences, University Politehnica of Bucharest, 313 Spl. Independentei, 060042, Bucharest, Romania} \\
}

\thanks{$^*$Corresponding author. Email: \href{mailto:ionutsimon.gh@gmail.com}{ionutsimon.gh@gmail.com}}
\thanks{ Email: \href{mailto:maria.joita@upb.ro}{maria.joita@upb.ro}, \href{mailto:mjoita@fmi.unibuc.ro}{mjoita@fmi.unibuc.ro}, URL: \url{http://sites.google.com/a/g.unibuc.ro/maria-joita}}

\begin{document}

\onehalfspacing
\maketitle

\begin{abstract}
To extend the notion of the injective envelope of a unital operator space to the locally convex case, Dosi in \cite{DD} first introduced the notion of the injective $\mathcal{R}$-envelope for a unital operator space and then defined the injective $\mathcal{R}$-envelope for a unital local operator space as the closure of the injective $\mathcal{R}$-envelope for its bounded part. In this paper, we investigate the existence of the Shilov boundary ideal in
this context, as defined by Arveson in \cite{Arv69}. To do this, by following the conceptual frameworks underlying Hamana's constructions of the injective envelope and the $C^*$-envelope, respectively, we define the notions of the injective $\mathcal{R}$-envelope and the $\mathcal{R}-C^*$-envelope for a unital local operator space.
 Furthermore, we show that the injective $
\mathcal{R}$-envelope construction given by us coincides with the one given by Dosi in \cite{DD}.

\end{abstract}

\vspace{1ex}
\begin{small}

\noindent\textbf{Keywords:} local operator space, locally \( W^* \)-algebras, quantized domain, injective envelope, rigidity.

\noindent\textbf{MSC (2020):} 46L05; 46L07; 46L10; 47L25
\end{small}
\vspace{1.5ex}

\section{Introduction}

A concrete operator space is a subspace of a $C^{\ast }$-algebra. The well-known Gelfand-Naimark theorem states that any unital commutative $C^*$-algebra is $*$-isomorphic to a $C^*$-algebra of continuous functions on a compact Hausdorff space.
Thus, in non-commutative geometry, $C^{\ast }$-algebras are seen as non-commutative analogues of
topological spaces, and operator spaces are viewed as non-commutative analogues of Banach spaces. Many classical
geometric concepts have been reformulated in terms of commutative $C^{\ast }$%
-algebras to obtain generalizations in the non-commutative case.
Arveson \cite{Arv69} introduced the notions of boundary representation for an
operator system (i.e., a unital self-adjoint subspace of a unital $C^{\ast }$
-algebra) as a non-commutative analogue of a Choquet point in the commutative case, and the
Shilov boundary ideal for an operator system as the analogue of the classical Shilov
boundary. The existence of boundary representations remained an open problem for
nearly 40 years. In 2007, Arveson \cite{Arv08} showed that a separable
operator system $\mathcal{S}$ has sufficiently many boundary
representations, and that the Shilov ideal for $\mathcal{S}$ is the intersection
of the kernels of the boundary representations. The full resolution of the existence of the Choquet boundary (also in the non-separable case) is achieved by Davidson-Kennedy \cite{DaKe15}.
 If $\mathcal{S}\subseteq B(%
\mathcal{H})$ is an operator system and $i:\mathcal{S\rightarrow }B(\mathcal{%
K})$ is a complete isometry, then $i\left( \mathcal{S}\right) \subseteq B(%
\mathcal{K})$ is also an operator system with the same matrix order structure as $
\mathcal{S}.$ However, there is generally no canonical relationship between the $C^{\ast }$%
-algebras $C^{\ast }(\mathcal{S})$ and $C^{\ast }(i(\mathcal{S)})$ generated
by $\mathcal{S}$ and $i(\mathcal{S)}$, respectively. 
Hamana \cite{H} showed
that there exists a "minimal " $C^{\ast }$-algebra  generated by an
operator system that is unitally completely order isomorphic to $\mathcal{S}%
.$ This "minimal " $C^{\ast }$-algebra is called the $C^{\ast }$-envelope of 
$\mathcal{S}$, and it is $\ast $-isomorphic to the quotient $C^{\ast }$%
-algebra $C^{\ast }(\mathcal{S})/\mathcal{J},$ where $\mathcal{J}$ is the
Shilov boundary ideal for $\mathcal{S}$ in the sense of Arveson \cite[%
Definition 2.1.3]{Arv69}.

Locally $C^{\ast }$-algebras are generalizations of the notion of $C^{\ast }$%
-algebras. In a locally $C^*$-algebra, the topology is defined by a
family of $C^{\ast }$-seminorms rather than a single $C^{\ast }$-norm. If $%
\mathcal{A}$ is a locally $C^{\ast }$-algebra whose topology is defined by
the family of $C^{\ast }$-seminorms $\{p_{\iota }\}_{\iota \in \Omega }$,
then its bounded part $b(\mathcal{A})=\{a\in \mathcal{A}: \left\Vert
a\right\Vert _{\infty }=\sup \{p_{\iota }\left( a\right) \mid \iota \in \Omega
\}<\infty \}$ is a $C^{\ast }$-algebra that is dense in $\mathcal{A}.$  

A quantized domain is a triple $\{\mathcal{H};\mathcal{E}=\{\mathcal{H}%
_{\iota }\}_{\iota \in \Omega };\mathcal{D}_{\mathcal{E}}=\bigcup\limits_{\iota\in\Omega} \mathcal{H}%
_{\iota }\}$, where $\mathcal{H}$ is a Hilbert space, and $\{\mathcal{H}_{\iota
}\}_{\iota \in \Omega }$ is a family of closed nonzero subspaces of $%
\mathcal{H}$ such that the algebraic sum $\mathcal{D}_{\mathcal{E}}=\bigcup\limits_{\iota\in\Omega} \mathcal{%
H}_{\iota }$ is dense in $\mathcal{H}$. The family  $\{\mathcal{H}_{\iota
}\}_{\iota \in \Omega }$ 
induces a corresponding family of projections $\left\{ P_{\iota }\right\} _{\iota \in \Omega }$ on $\mathcal{H},$ where for each $\iota \in \Omega ,$ $P_{\iota }
$ denotes the orthogonal{\small \ }projection onto $\mathcal{H}_{\iota }$. If $%
\left\{ P_{\iota }\right\} _{\iota \in \Omega }$ is a mutual commuting
family of projections in $B(\mathcal{H})$, we say that $\mathcal{D}_{%
\mathcal{E}}$ is a commutative domain and if $\left\{ P_{\iota }\right\}
_{\iota \in \Omega }$ is an orthogonal family of projections in $B(\mathcal{H%
})$, we say that $\mathcal{D}_{\mathcal{E}}$ is a graded domain. The set
$$
C^{\ast }(\mathcal{D}_{\mathcal{E}}):=\{T\in \mathcal{L}(\mathcal{D}_{
\mathcal{E}}):\ P_{\iota }T\subseteq TP_{\iota },TP_{\iota }=P_{\iota
}TP_{\iota }\in B(\mathcal{H}),\left( \forall \right) \iota \in \Omega \}
$$
is a locally $C^{\ast }$-algebra with the involution given by $
T\mapsto T^{\ast }=\left. T^{\bigstar }\right\vert _{\mathcal{D}_{\mathcal{E}%
}}$, where $T^{\bigstar }$ denotes the adjoint of $T$, and with the topology defined by
the family of $C^{\ast }$-seminorms $\{\left\Vert \cdot \right\Vert _{\iota
}\}_{\iota \in \Omega }$, where $\left\Vert T\right\Vert _{\iota
}=\left\Vert \left. T\right\vert _{\mathcal{H}_{\iota }}\right\Vert =\sup
\{\left\Vert T\left( \xi \right) \right\Vert :\ \xi \in \mathcal{H}_{\iota
},\left\Vert \xi \right\Vert \leq 1\}.$

A locally $C^{\ast }$-algebra $\mathcal{A}$ can be identified with a locally 
$C^{\ast }$-subalgebra of $C^{\ast }(\mathcal{D}_{\mathcal{E}})$ for some
commutative domain $\mathcal{D}_{\mathcal{E}}.$ The locally $C^{\ast }$%
-algebra $C^{\ast }(\mathcal{D}_{\mathcal{E}})$ plays, in a certain sense, the
role of $B(\mathcal{H)}$ in the theory of $C^{\ast }$-algebras. The bounded
part of $C^{\ast }(\mathcal{D}_{%
\mathcal{E}})$, denoted by $b(C^{\ast }(\mathcal{D}_{\mathcal{E}}))$, can be identified with $\{\{P_{\iota }\}_{\iota \in \Omega
}\}^{\prime },$ the commutant in $B(\mathcal{H})$ of the family of projections $\{P_{\iota }\}_{\iota \in \Omega }$.

Effros and Webster \cite{EW} initiated the study of a locally convex version
of operator spaces, known as local operator spaces. In 2008, A. Dosiev \cite%
{D} represented local operator spaces as subspaces of the locally $C^{\ast }$%
-algebra $C^{\ast }(\mathcal{D}_{\mathcal{E}})$ of unbounded operators on a
quantized domain $\mathcal{D}_{\mathcal{E}}$. This result generalizes Ruan's
representation theorem for operator spaces (see, e.g., \cite[Theorem 2.3.5]
{ER}).

A unital local operator space $\mathcal{V}\subseteq  C^{\ast }(\mathcal{D}_{%
\mathcal{E}})$ is injective if, for any unital local operator space $%
\mathcal{W}$ and any unital local operator subspace $\mathcal{W}_{0}\subseteq 
\mathcal{W}$, every unital local completely contractive map from $\mathcal{W}%
_{0}$ to $\mathcal{V}$ extends to a unital local completely contractive map
from $\mathcal{W}$ to $\mathcal{V}$. Dosiev \cite{DD} explores the notion of injectivity
in the locally convex setting, establishing a strong connection with
injectivity in the normed case. If $\mathcal{A}\subseteq C_{\mathcal{E}%
}^{\ast }(\mathcal{D})$ is an injective locally $C^{\ast }$-algebra, then
its bounded part $b(\mathcal{A})$ is an injective $C^{\ast }$-algebra (see 
\cite[Proposition 3.1]{DD}). In general, the converse does not hold: a completely contractive map cannot, in general, be extended to a local completely contractive map.

A locally $W^{\ast }$-algebra (that is, an inverse limit of an inverse
system of $W^{\ast }$-algebras with $w^{\ast }$
-continuous $\ast $-morphisms as connecting maps) can be indentified with a locally von Neumann
algebra $\mathcal{A}$ $\subseteq C^{\ast }(\mathcal{D}_{\mathcal{E}})$ for
some commutative domain $\{\mathcal{H};\mathcal{E=}\{\mathcal{H}_{\iota
}\}_{\iota \in \Omega };\mathcal{D}_{\mathcal{E}}=\bigcup\limits_{\iota\in\Omega} \mathcal{H}_{\iota
}\}$ (see \cite{D1}). Thus, $b(\mathcal{A})$ is a von Neuman algebra and $%
\{P_{\iota }\}_{\iota \in \Omega }\subseteq b(\mathcal{A})\subseteq
\{\{P_{\iota }\}_{\iota \in \Omega }\}^{\prime }.$ Let $\mathcal{R}$ be the
commutative subring of $B(\mathcal{H})$ generated by $\{P_{\iota }\}_{\iota
\in \Omega }\cup \{1_{\mathcal{H}}\}$. Then both $b(\mathcal{A})$ and $\mathcal{%
A}$ carry a natural structure of $\mathcal{R}$-modules. A completely contractive map
on $b(\mathcal{A})$ that is also an $\mathcal{R}$ -morphism extends to a
local completely contractive map on $\mathcal{A}.$ A unital local operator
space $\mathcal{V}\subseteq C^{\ast }(\mathcal{D}_{\mathcal{E}})$ is an
injective local $\mathcal{R}$-module if it is injective and  $\mathcal{%
RV\subseteq V}$ (see \cite{DD}). Dosiev \cite[Proposition 4.1]{DD} shows
that if $C^{\ast }(\mathcal{D}_{\mathcal{E}})$ is injective, then a unital
local operator space $\mathcal{V}\subseteq C^{\ast }(\mathcal{D}_{\mathcal{E}%
})$ is an injective local $\mathcal{R}$-module if and only if $b(\mathcal{V}%
)\subseteq b\left( C^{\ast }(\mathcal{D}_{\mathcal{E}})\right) $ is an
injective\textit{\ }$\mathcal{R}$-module and $\overline{b(\mathcal{V})}=%
\mathcal{V}$ (the closure of $b(\mathcal{V})$ in $C^{\ast }(\mathcal{D}_{%
\mathcal{E}})$). Moreover, if $C^{\ast }(\mathcal{D}_{\mathcal{E}})$ is injective,
then a locally $W^{\ast }$-algebra $\mathcal{A}\subseteq C^{\ast }(\mathcal{D%
}_{\mathcal{E}})$ is injective if and only if $b(\mathcal{V})$ is
injective \cite[Theorem 4.1]{DD}.

The structure of this paper is as follows: In Section 2, we collect some
basic facts about locally $C^{\ast }$-algebras. We refer to \cite{A}, \cite%
{D}, \cite{D1}, \cite{EW}, \cite{F1}, \cite{Fr}, \cite{I}, \cite{P} and \cite%
{S} for more details. Section 3  is divided into two subsections. In Subsection 3.1., we recall the construction of Dosiev 
\cite[Section 3]{DD} of  the injective $\mathcal{R}$-envelope for an operator
space contained in an injective von Neumann algebra. Subsection 3.2 is devoted to the construction of the $\mathcal{R}$-$C^{\ast }$-envelope of a unital operator
space contained in an injective von Neumann algebra. Section 4 consists of two subsections. In Subsection 4.1, we present the
construction of the injective $\mathcal{R}$-envelope of a unital local
operator space, in the sense of Hamana (see Definition \ref{Def-INJ-Env}). 
In addition, in Lemma \ref{Lemma 2}, we establish a locally convex version of
the Hamana-Ruan extension lemma (see \cite[6.1.5]{ER}, \cite{H}, \cite{Ruan}%
). We also show that every unital local operator space has an
injective $\mathcal{R}$-envelope, which is unique up to a unital local
complete order $\mathcal{R}$-isomorphism (Proposition \ref{Coro}). Furthermore, we
show that our definition of the injective $\mathcal{R}$-envelope of a unital
local operator space coincides with the definition given by Dosi in \cite{DD}
(Proposition \ref{Prop. def-equiv}). In Subsection 4.2. we introduce the
concept of the local $\mathcal{R}$-$C^{\ast }$-envelope of a unital local
operator space (Definition \ref{UP-local}) and prove that any unital
local operator space has a (unique) local $\mathcal{R}$-$C^{\ast }$-envelope
(Theorem \ref{Th-C^*-env}, Corollary \ref{Corollary C^*-env}). We also establish a connection between the local $\mathcal{R}$-$C^{\ast }$%
-envelope of a unital local operator space and the $\mathcal{R}$-$C^{\ast }$%
-envelope of a unital operator space, inspired by the one introduced by
Dosi \cite{DD} for the injective $\mathcal{R}$-envelope (see, Remark \ref%
{Remarkk C^*-env}). Finally, in Section 5, we present our main results concerning the existence of the local $\mathcal{R}$-Shilov
boundary ideal of a unital local operator space (Definition %
\ref{Deff Shilov R-ideal}, Theorem \ref{R-Shilov boundary}, and Corollary \ref%
{C. R-Shilov}).

\section{Preliminaries}

\subsection{Locally $C^*$-algebras}

Let $\mathcal{A}$ be a unital $\ast $-algebra with unit denoted by $1_{%
\mathcal{A}}$. A seminorm $p$ on $\mathcal{A}$ is called \textit{%
sub-multiplicative} if $p(1_{\mathcal{A}})=1$ and $p(ab)\leq p(a)p(b)$ for
all $a,b\in \mathcal{A}$. A sub-multiplicative seminorm $p$ on $\mathcal{A}$
is called a \textit{$C^{\ast }$-seminorm} if $p(a^{\ast })=p(a)$ and $%
p(a^{\ast }a)=p(a)^{2}$ for all $a\in \mathcal{A}$.

Let $\left( \Lambda ,\leq \right) $ be a directed poset, and let $\mathcal{P}%
:=\left\{ p_{\lambda }:\lambda \in \Lambda \right\} $ be a family of $%
C^{\ast }$-seminorms defined on the $\ast $-algebra $\mathcal{A}$. We say
that $\mathcal{P}$ is an \textit{upward filtered family} of $C^{\ast }$%
-seminorms if $p_{\lambda _{1}}(a)\leq p_{\lambda _{2}}(a)$ for all $a\in 
\mathcal{A}$, whenever $\lambda _{1}\leq \lambda _{2}\ $in$\ \Lambda .$

\begin{definition}
A locally $C^*$-algebra $\mathcal{A}$ is a complete Hausdorff topological $*$-algebra over $\mathbb{C}$
whose topology is determined by an upward filtered family $\left\lbrace p_{\lambda}: \lambda\in\Lambda  \right\rbrace$ of $C^*$-seminorms defined on $\mathcal{A}$.
\end{definition}

An element $a\in A$ is \textit{bounded} if $\sup\limits\left\lbrace p_{\lambda}(a)\mid \lambda\in\Lambda \right\rbrace<\infty.$ The subset $b(A)=\left\lbrace  a\in A\mid
\Vert a \Vert_{\infty}:=\sup\limits\left\lbrace p_{\lambda}(a)\mid \lambda\in\Lambda \right\rbrace<\infty \right\rbrace$ is a $C^*$-algebra with respect to the $C^*$-norm $\Vert\cdot\Vert_{\infty}.$ Moreover, $b(A)$ is dense in $\mathcal{A}$.

We see that $\mathcal{A}$ can be realized as a projective limit of an inverse family of $C^*$-algebras as follows:  

For each $\lambda \in \Lambda $, let $\mathcal{I}_{\lambda }:=\left\{ a\in
A:p_{\lambda }(a)=0\right\} .$ Clearly, $\mathcal{I}_{\lambda }$ is a closed
two-sided $\ast $-ideal in $\mathcal{A}$ and $\mathcal{A}_{\lambda }:=%
\mathcal{A}/\mathcal{I}_{\lambda }$ is a $C^{\ast }$-algebra with respect to
the norm induced by $p_{\lambda }$ (see \cite{A}). The canonical quotient $%
\ast $-homomorphism from $\mathcal{A}$ to $\mathcal{A}_{\lambda }$ is
denoted by $\pi _{\lambda }^{\mathcal{A}}.$ For each $\lambda _{1},\lambda
_{2}\in \Lambda $ with $\lambda _{1}\leq \lambda _{2}$, there is a canonical
surjective $\ast $-homomorphism $\pi _{\lambda _{2}\lambda _{1}}^{\mathcal{A}%
}:\mathcal{A}_{\lambda _{2}}\rightarrow \mathcal{A}_{\lambda _{1}}$ defined
by $\pi _{\lambda _{2}\lambda _{1}}^{\mathcal{A}}\left( a+\mathcal{I}%
_{\lambda _{2}}\right) =a+\mathcal{I}_{\lambda _{1}}$, for all $a\in 
\mathcal{A}.$ Then $\left\{ \mathcal{A}_{\lambda },\pi _{\lambda _{2}\lambda
_{1}}^{\mathcal{A}}\right\} $ forms an inverse system of $C^{\ast }$%
-algebras, because $\pi _{\lambda _{1}}^{\mathcal{A}}=\pi _{\lambda
_{2}\lambda _{1}}^{\mathcal{A}}\circ \pi _{\lambda _{2}}^{\mathcal{A}}$
whenever $\lambda _{1}\leq \lambda _{2}.$ The projective limit 
\begin{equation*}
\varprojlim\limits_{\lambda }\mathcal{A}_{\lambda }:=\left\{ \{a_{\lambda
}\}_{\lambda \in \Lambda }\in \prod\limits_{\lambda \in \Lambda }\mid \pi
_{\lambda _{2}\lambda _{1}}^{\mathcal{A}}(a_{\lambda _{2}})=a_{\lambda
_{1}}\ \mathit{whenever}\ \lambda _{1}\leq \lambda _{2},\lambda _{1},\lambda
_{2}\in \Lambda \right\}
\end{equation*}%
of the inverse system of $C^{\ast }$-algebras $\left\{ \mathcal{A}_{\lambda
},\pi _{\lambda _{2}\lambda _{1}}^{\mathcal{A}}\right\} $ is a locally $%
C^{\ast }$-algebra that may be identified with $\mathcal{A}$ by the map $%
a\mapsto \left( \pi _{\lambda }^{\mathcal{A}}(a)\right) _{\lambda \in
\Lambda }.$
\medskip

Subsequently, we recall local positivity in $\mathcal{A}$ with respect to the family $\left\lbrace p_{\lambda}: \lambda\in\Lambda  \right\rbrace$ of $C^*$-seminorms, as in \cite{D}.

\begin{definition}
Let $\mathcal{A}$ be a locally $C^*$-algebra. An element $a\in \mathcal{A}$ is called 
\begin{itemize}
\item[i)] local self-adjoint (or local hermitian) if $a=a^*+x$, where $x\in \mathcal{A}$ such that $p_{\lambda}(x)=0$ for some $\lambda\in\Lambda$.
\item[ii)] local positive if $a=b^*b+x$, where $b,x\in \mathcal{A}$ such that $p_{\lambda}(x)=0$ for some $\lambda\in\Lambda.$
\end{itemize}
\end{definition}

In this case, we say that $a$ is $\lambda$-\textit{self-adjoint} (or \textit{$\lambda$-hermitian}) and \textit{$\lambda$-positive}, respectively. We write $a\geq_{\lambda}0$ when $a$ is $\lambda$-positive and $a=_{\lambda}0$ whenever $p_{\lambda}(a)=0.$ Note that $a\in \mathcal{A}$ is local self-adjoint if and only if there exists $\lambda\in\Lambda$ such that $\pi_{\lambda}^{\mathcal{A}}(a)$ is self-adjoint in the $C^*$-algebra $\mathcal{A}_{\lambda}$. Similarly, $a\in \mathcal{A}$ is local positive if and only if there exists $\lambda\in\Lambda$ such that $\pi_{\lambda}^{\mathcal{A}}(a)$ is positive in $\mathcal{A}_{\lambda}.$

\begin{definition}
Let $\mathcal{A}$ and $\mathcal{B}$ be two locally $C^{\ast }$-algebras,
where the topology is given by the families of $C^{\ast }$-seminorms $%
\{p_{\lambda }\}_{\lambda \in \Lambda }$ and $\{q_{\delta }\}_{\delta \in
\Delta }$, respectively. A $\ast $-morphism $\pi :\mathcal{A\rightarrow B}$
is called a \textit{local contractive $\ast $-morphism} if, for every $%
\delta \in \Delta $, there exists $\lambda \in \Lambda $ such that $%
q_{\delta }(\pi (a))\leq p_{\lambda }(a)$, for all $a\in \mathcal{A}.$ If $%
\Delta =\Lambda $ and $q_{\lambda }(\pi (a))=p_{\lambda }(a)$ for all $a\in 
\mathcal{A}$, then $\pi $ is called a \textit{local isometric $\ast $%
-morphism}.

\end{definition}

\subsection{Quantized domains}

A quantized domain is a triple $\{\mathcal{H};\mathcal{E=}\{\mathcal{H}
_{\iota }\}_{\iota \in \Omega };\mathcal{D}_{\mathcal{E}}=\bigcup\limits_{\iota\in\Omega} \mathcal{H}%
_{\iota }\}$, where $\mathcal{H}$ is a Hilbert space, $\{\mathcal{H}_{\iota
}\}_{\iota \in \Omega }$ is a family of closed nonzero subspaces of $%
\mathcal{H}$ and the algebraic sum $\mathcal{D}_{\mathcal{E}}$ is dense in $%
\mathcal{H}$. The family $\{\mathcal{H}_{\iota }\}_{\iota \in \Omega }$
of closed nonzero subspaces of $\mathcal{H}$ induces a corresponding family  $\left\{
P_{\iota }\right\} _{\iota \in \Omega }$ of projections on $\mathcal{H},$
where for each $\iota \in \Omega ,$ $P_{\iota }$ denotes the orthogonal
projection on $\mathcal{H}_{\iota }$. If $\left\{ P_{\iota }\right\} _{\iota
\in \Omega }$ is a mutually commuting family of projections in $B(\mathcal{H})$%
, we say that $\mathcal{D}_{\mathcal{E}}$ is a \textit{commutative domain}.
If $\left\{ P_{\iota }\right\} _{\iota \in \Omega }$ is an orthogonal
family of projections in $B(\mathcal{H})$, we say that $\mathcal{D}_{%
\mathcal{E}}$ is a \textit{graded domain. }

We may assume that $\left( \Omega ,\leq \right) $ is a directed poset and that $
\mathcal{E=}\{\mathcal{H}_{\iota }\}_{\iota \in \Omega }$ is an upward
filtered family of closed subspaces of a Hilbert space $\mathcal{H}$ such that the union space $\mathcal{D}_{%
\mathcal{E}}:=\bigcup\limits_{\iota \in \Omega }\mathcal{H}_{\iota }$ is
dense in $\mathcal{H}$. Indeed, let $\Omega ^{f}=\{\alpha \subseteq \Omega
\mid \alpha $ is finite$\}.$ For each $\alpha \in $ $\Omega ^{f},$ define $\mathcal{H}%
_{\alpha }:=\bigcup\limits_{\iota \in \alpha }\mathcal{H}_{\iota }.$ Then $\{%
\mathcal{H}_{\alpha }\}_{\alpha \in \Omega ^{f}}$ is an upward filtered
family of closed subspaces, and $\mathcal{D}_{\mathcal{E}}:=\bigcup\limits_{%
\alpha \in \Omega ^{f}}\mathcal{H}_{\alpha }.$ If $\Omega $ is countable, we
say that $\lbrace\mathcal{H};\mathcal{E}=\{\mathcal{H}_{\iota }\}_{\iota \in \Omega
};\mathcal{D}_{\mathcal{E}}=\bigcup\limits_{\iota\in\Omega} \mathcal{H}_{\iota }\rbrace$ is a \textit{Fr\'{e}chet quantized domain}.

Let $\left\{ \mathcal{H},\mathcal{E},\mathcal{D}_{\mathcal{E}}\right\} $ be
a quantized domain. The set 
\begin{equation*}
C^{\ast }(\mathcal{D}_{\mathcal{E}}):=\{T\in \mathcal{L}(\mathcal{D}_{%
\mathcal{E}}):\ P_{\iota }T\subseteq TP_{\iota },TP_{\iota }=P_{\iota
}TP_{\iota }\in B(\mathcal{H}), \left( \forall \right) \iota \in \Omega \},
\end{equation*}%
where $\mathcal{L}(\mathcal{D}_{\mathcal{E}})$ denotes the set of all linear
operators on $\mathcal{D}_{\mathcal{E}}$, is a locally $C^{\ast }$-algebra
with  involution defined by
\begin{equation*}
T^{\ast }:=T^{\bigstar }\restriction_{\mathcal{D}_{\mathcal{E}}}\in C^{\ast }(\mathcal{%
D}_{\mathcal{E}})
\end{equation*}%
where $T^{\bigstar }$ is the adjoint operator associated to $T,$ and 
topology induced by the family of $C^{\ast }$-seminorms $\{q_{\iota
}\}_{\iota \in \Omega }$, 
\begin{equation*}
q_{\iota }(T):=\Vert T\upharpoonright _{\mathcal{H}_{\iota }}\Vert _{B(%
\mathcal{H}_{\iota })}.
\end{equation*}

Note that
\begin{itemize}
\item[i)] $b\left( C^*_{\mathcal{E}}(\mathcal{D}) \right)$ is identified with the $C^*$-algebra $$\left\lbrace T\in B(\mathcal{H})\mid P_{\iota}T=TP_{\iota}, (\forall)\ \iota\in\Omega  \right\rbrace$$ via the map $T\mapsto\tilde{T}$, where $\tilde{T}$ is the extension of $T$ to $\mathcal{H}$ (see \cite[Lemma 3.1.]{D1})).
\item[ii)] For each $\iota\in\Omega, \left( C^*_{\mathcal{E}}(\mathcal{D})\right)_{\iota} $ is a $C^*$-subalgebra of $B(\mathcal{H}_{\iota})$ and $$\pi_{\iota}^{C^*_{\mathcal{E}}(\mathcal{D})}\left(  T \right)= T\restriction_{\mathcal{H}_{\iota}}.$$ Moreover, $T\geq_{\iota} 0$ if and only if $T\restriction_{\mathcal{H}_{\iota}}\geq 0$ in $B(\mathcal{H}_{\iota}).$ 
\end{itemize}


 As mentioned in \cite{D}, the locally $C^*$-algebra $C^{\ast }(\mathcal{D}_{\mathcal{E}})$ is considered the natural analog of $B(\mathcal{H})$ in the local operator space theory. However, it should be noted that this algebra works differently than $B(\mathcal{H})$ in the operator space theory, and more careful consideration should be given.

\begin{theorem}[\cite{D1}, Proposition 3.1]
 For any unital locally $C^{\ast }$-algebra $
\mathcal{A}$, there exist a commutative domain $\{\mathcal{H};\mathcal{E};
\mathcal{D}_{\mathcal{E}}\}$ and a local isometric $\ast $-morphism $\pi :
\mathcal{A}\rightarrow C^{\ast }(\mathcal{D}_{\mathcal{E}})$.

\end{theorem}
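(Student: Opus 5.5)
The strategy is to realize $\mathcal{A}$ as the projective limit $\varprojlim_{\lambda}\mathcal{A}_{\lambda}$ of the $C^{\ast}$-algebras $\mathcal{A}_{\lambda}=\mathcal{A}/\mathcal{I}_{\lambda}$ from Section 2. We then represent each $\mathcal{A}_{\lambda}$ faithfully on a Hilbert space and assemble these representations into a nested family of subspaces. Naively taking a direct sum of GNS representations of the $\mathcal{A}_{\lambda}$ produces a graded domain whose subspaces are not upward filtered. The fix is to index by $\lambda\in\Lambda$ and set
\[
\mathcal{H}_{\lambda}:=\bigoplus_{\mu\leq\lambda}\mathcal{K}_{\mu},
\]
where $\rho_{\mu}:\mathcal{A}_{\mu}\rightarrow B(\mathcal{K}_{\mu})$ is a faithful unital representation given by Gelfand--Naimark. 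We also set $\mathcal{H}:=\bigoplus_{\mu\in\Lambda}\mathcal{K}_{\mu}$. Then $\lambda_{1}\leq\lambda_{2}$ implies $\mathcal{H}_{\lambda_{1}}\subseteq\mathcal{H}_{\lambda_{2}}$. The union of the $\mathcal{H}_{\lambda}$ contains every finitely supported vector, since $\Lambda$ is directed, and is therefore dense. The projections $P_{\lambda}$ are diagonal with respect to the decomposition, so they commute, and the domain is commutative.

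Define $\pi(a)$ on $\mathcal{D}_{\mathcal{E}}$ by $\pi(a)(\xi_{\mu})_{\mu}:=(\rho_{\mu}(\pi_{\mu}^{\mathcal{A}}(a))\xi_{\mu})_{\mu}$. This is well defined on each $\mathcal{H}_{\lambda}$ because only finitely many summands are involved in a bounded way. More precisely, on $\mathcal{H}_{\lambda}$ the operator $\pi(a)$ is the direct sum over $\mu\leq\lambda$ of the $\rho_{\mu}(\pi^{\mathcal{A}}_{\mu}(a))$, and each of these has norm $p_{\mu}(a)\leq p_{\lambda}(a)$. Hence $\pi(a)|_{\mathcal{H}_{\lambda}}$ is bounded with norm $\sup_{\mu\leq\lambda}p_{\mu}(a)=p_{\lambda}(a)$, the value being attained at $\mu=\lambda$ by faithfulness of $\rho_{\lambda}$.

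Next we check $\pi(a)\in C^{\ast}(\mathcal{D}_{\mathcal{E}})$. The operator $\pi(a)$ is block diagonal, so it leaves each $\mathcal{H}_{\lambda}$ invariant and commutes with $P_{\lambda}$ on $\mathcal{D}_{\mathcal{E}}$. This gives $P_{\lambda}\pi(a)\subseteq\pi(a)P_{\lambda}$ and $\pi(a)P_{\lambda}=P_{\lambda}\pi(a)P_{\lambda}\in B(\mathcal{H})$. The involution also behaves correctly: the adjoint of a block-diagonal operator is blockwise, so $\pi(a^{\ast})=\pi(a)^{\ast}$ restricted to $\mathcal{D}_{\mathcal{E}}$. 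Since multiplicativity, linearity and unitality hold blockwise, $\pi$ is a unital $\ast$-morphism. The seminorm computation above gives $q_{\lambda}(\pi(a))=p_{\lambda}(a)$ for all $\lambda$, with $\Omega=\Lambda$, so $\pi$ is a local isometric $\ast$-morphism.

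The main point needing care is the bookkeeping in the definition of a local isometric $\ast$-morphism, which requires the index sets to coincide and the seminorms to agree exactly. This is exactly why the domain is built from the nested sums $\bigoplus_{\mu\leq\lambda}$ rather than from the single summand $\mathcal{K}_{\lambda}$. The monotonicity of the upward filtered family of seminorms is what gives equality instead of a mere inequality. A secondary technicality is injectivity: it follows from the Hausdorff property, since $p_{\lambda}(a)=0$ for all $\lambda$ forces $a=0$. If one prefers to avoid arbitrary sums, Dosiev's construction can be quoted instead, but the direct argument above seems self-contained.
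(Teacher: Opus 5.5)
Your proof is correct. The paper gives no argument for this statement: it quotes Dosiev [D1, Proposition 3.1] and uses the result as a black box, so there is no in-text proof to compare with. Your construction is a self-contained substitute for that citation. The citation buys the paper brevity. Your version makes explicit the properties the paper actually relies on: the same index set, exact equality of seminorms, and commuting projections.

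The choice $\mathcal{H}_{\lambda}=\bigoplus_{\mu\leq\lambda}\mathcal{K}_{\mu}$ does what the paper's conventions require:
\begin{itemize}
\item[(i)] The family is upward filtered and indexed by $\Lambda$ itself, so $\Omega=\Lambda$, as the definition of a local isometric $\ast$-morphism demands.
\item[(ii)] The $P_{\lambda}$ are diagonal, hence commute.
\item[(iii)] $\pi(a)P_{\lambda}$ is everywhere defined and bounded, because $P_{\lambda}\mathcal{H}=\mathcal{H}_{\lambda}\subseteq\mathcal{D}_{\mathcal{E}}$.
\item[(iv)] $q_{\lambda}(\pi(a))=\sup_{\mu\leq\lambda}p_{\mu}(a)=p_{\lambda}(a)$, by monotonicity of the seminorms and isometry of the faithful $\rho_{\lambda}$ on $\mathcal{A}_{\lambda}$.
\end{itemize}
Your reason for the nested sums is also right. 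The graded domain $\{\mathcal{K}_{\mu}\}_{\mu\in\Lambda}$ would give $q_{\mu}\circ\pi=p_{\mu}$ as well, but its seminorm family is not upward filtered. Making it filtered by passing to finite subsets of $\Lambda$, as in Subsection 2.2, changes the index set.

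One cosmetic correction: the remark that ``only finitely many summands are involved'' is false for a general directed set, since a fixed $\lambda$ may have infinitely many $\mu\leq\lambda$. Boundedness of $\pi(a)\upharpoonright_{\mathcal{H}_{\lambda}}$ comes from the uniform bound $p_{\mu}(a)\leq p_{\lambda}(a)$ for $\mu\leq\lambda$. You state that bound in the very next sentence, so nothing in the argument depends on finiteness.
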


 This result can be regarded as an unbounded analogue of the Gelfand-Naimark theorem.

\begin{remark}
Let $\{\mathcal{H};\mathcal{E=}\{\mathcal{H}_{\iota }\}_{\iota \in \Omega };%
\mathcal{D}_{\mathcal{E}}=\bigcup\limits_{\iota\in\Omega} \mathcal{H}_{\iota }\}$ be a quantized
domain. Then the triple $\{\mathcal{K};\mathcal{F=}\{\mathcal{H}_{\iota }\}_{\iota \in
\Omega };\mathcal{D}_{\mathcal{F}}=\bigcup\limits_{\iota\in\Omega} \mathcal{H}_{\iota }\},$ where $%
\mathcal{K}=\bigoplus\limits_{\iota\in\Omega} \mathcal{H}_{\iota },$
is a graded\textit{\ } domain, and the map $
i:C^{\ast }(\mathcal{D}_{\mathcal{E}})\rightarrow C^{\ast }(\mathcal{D}_{%
\mathcal{F}})\ $ defined by $i\left( T \right) :=\left( T\restriction_{\mathcal{H}_{\iota}} \right) _{\iota \in \Omega }$ is a local isometric $%
\ast $-morphism (see the proof of  \cite[Proposition 3.2]{DD}).

\end{remark}

\subsection{Local positive maps}

Let $\mathcal{A}$ be a unital locally $C^*$-algebra with the topology defined by the family of $C^*$-seminorms $\lbrace p_{\lambda} \rbrace_{\lambda\in\Lambda}$, and let $\mathcal{B}$ be another unital locally $C^*$-algebra with the topology defined by the family of $C^*$-seminorms $\lbrace q_{\delta} \rbrace_{\delta\in\Delta}$.

 For each $n\in\mathbb{N}$, $M_{n}(\mathcal{A})$ denotes the set of all $n\times n$ matrices over $\mathcal{A}$. Note that $M_{n}(\mathcal{A})$ is a unital locally $C^*$-algebra with the associated family of $C^*$-seminorms denoted by 
$\left\lbrace p_{\lambda}^{n}: \ \lambda\in\Lambda  \right\rbrace$, where $p_{\lambda}^{n}\left( [a_{ij}]_{i,j=1}^{n} \right)=\Vert [\pi_{\lambda}^{\mathcal{A}}(a_{ij})]_{i,j=1}^{n} \Vert_{M_{n}(\mathcal{A}_{n})}$ for all $\lambda\in\Lambda$. Moreover, $M_{n}(\mathcal{A})$ can be identified with $\lim\limits_{\longleftarrow}M_{n}(\mathcal{A}_{\lambda}).$

For each $n\in\mathbb{N}$, the \textit{$n$-amplification} of the linear map $\varphi:\mathcal{A}\rightarrow \mathcal{B}$ is the map $\varphi^{(n)}:M_{n}(\mathcal{A})\rightarrow M_{n}(\mathcal{B})$ defined by $$\varphi^{(n)}\left( [a_{ij}]_{i,j=1}^{n} \right):=\left[ \varphi(a_{ij})  \right]_{i,j=1}^{n}$$  for  all $ \left[ a_{ij} \right]_{i,j=1}^{n}\in M_{n}(\mathcal{A})$.

 A linear map $\varphi:\mathcal{A}\rightarrow \mathcal{B}$ is called 
 \textit{local completely positive} if for each $\delta\in\Delta$, there exists $\lambda\in\Lambda$ such that $[\varphi(a_{ij})]\geq_{\delta} 0$ whenever $[a_{ij}]\geq_{\lambda} 0$ and $[\varphi(a_{ij})]=_{\delta} 0$ if $[a_{ij}]=_{\lambda} 0,$ for all $n\in\mathbb{N}$.

\begin{lemma}\label{Lem}
Let $\mathcal{A}$ and $\mathcal{B}$ be two unital locally $%
C^{\ast }$-algebras, $\varphi :\mathcal{A\rightarrow B}$ and $\mathcal{\psi
:B\rightarrow A}$ be unital local completely positive maps such that $%
\varphi \circ \psi =$id$_{\mathcal{B}}.$ Then $\varphi \left( \psi (b)^{\ast
}\psi (b)\right) =b^{\ast }b$ and $\varphi \left( \psi (b)\psi (b)^{\ast
}\right) =bb^{\ast }$, for all $b\in \mathcal{B}$.
\end{lemma}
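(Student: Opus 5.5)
The plan is to reduce the statement to the classical Schwarz inequality for unital completely positive maps between $C^{\ast}$-algebras, applied at each level $\delta\in\Delta$. The key inequality is the local Kadison–Schwarz inequality: for a unital local completely positive map $\phi$ and any element $a$,
\begin{equation*}
\phi(a)^{\ast}\phi(a)\leq_{\delta}\phi(a^{\ast}a)
\end{equation*}
for each seminorm index $\delta$ of the target. I would first derive this, then apply it twice.

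\emph{Step 1 (local Schwarz inequality).} Fix $\delta\in\Delta$. By local complete positivity of $\varphi$, choose $\lambda\in\Lambda$ such that $\varphi^{(n)}$ sends $\lambda$-positive matrices to $\delta$-positive ones and $\lambda$-null matrices to $\delta$-null ones. The null condition shows that $\varphi$ induces a well-defined linear map
\begin{equation*}
\varphi_{\delta\lambda}:\mathcal{A}_{\lambda}\to\mathcal{B}_{\delta},\qquad \pi_{\lambda}^{\mathcal{A}}(a)\mapsto\pi_{\delta}^{\mathcal{B}}(\varphi(a)).
\end{equation*}
This map is unital and completely positive between $C^{\ast}$-algebras. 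Positivity on $\mathcal{A}_\lambda$ lifts correctly because an element of $M_n(\mathcal{A}_\lambda)$ that is positive comes from a $\lambda$-positive matrix, namely $\pi_\lambda(b)^{\ast}\pi_\lambda(b)$ lifts to $b^{\ast}b$. The classical Kadison–Schwarz inequality for u.c.p.\ maps then gives $\varphi_{\delta\lambda}(x)^{\ast}\varphi_{\delta\lambda}(x)\leq\varphi_{\delta\lambda}(x^{\ast}x)$. Pulled back, this says $\varphi(a)^{\ast}\varphi(a)\leq_{\delta}\varphi(a^{\ast}a)$ for every $a\in\mathcal{A}$. The same argument applies to $\psi$ with the roles of the algebras exchanged.

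\emph{Step 2 (sandwich).} Fix $b\in\mathcal{B}$ and $\delta\in\Delta$. Apply Step 1 to $\varphi$ with $a=\psi(b)$:
\begin{equation*}
b^{\ast}b=\varphi(\psi(b))^{\ast}\varphi(\psi(b))\leq_{\delta}\varphi(\psi(b)^{\ast}\psi(b)).
\end{equation*}
For the reverse inequality, choose $\lambda$ adapted to $\delta$ for $\varphi$ as in Step 1. The Schwarz inequality for $\psi$ at level $\lambda$ gives $\psi(b)^{\ast}\psi(b)\leq_{\lambda}\psi(b^{\ast}b)$. In other words, $\psi(b^{\ast}b)-\psi(b)^{\ast}\psi(b)$ is $\lambda$-positive. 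Since $\varphi$ maps $\lambda$-positive elements to $\delta$-positive elements, and using $\varphi\circ\psi=\mathrm{id}$, we get $\varphi(\psi(b)^{\ast}\psi(b))\leq_{\delta}b^{\ast}b$. Combining the two inequalities in the $C^{\ast}$-algebra $\mathcal{B}_{\delta}$ gives $\pi_{\delta}^{\mathcal{B}}\bigl(\varphi(\psi(b)^{\ast}\psi(b))-b^{\ast}b\bigr)=0$. As $\delta$ is arbitrary and the family $\{q_{\delta}\}$ is Hausdorff, the first equality follows. Replacing $b$ by $b^{\ast}$, and using that unital local completely positive maps are $\ast$-preserving ($\psi(b^{\ast})=\psi(b)^{\ast}$), gives the second equality.

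\emph{Expected obstacle.} The delicate point is Step 1: making sure that the induced map $\varphi_{\delta\lambda}$ is well defined and genuinely completely positive on the quotient $C^{\ast}$-algebras. This is exactly where both clauses of the definition of local complete positivity are needed. The $\ast$-preservation used at the end also needs a check. It follows at each level, since $\varphi_{\delta\lambda}$ is a positive map between $C^{\ast}$-algebras and hence self-adjoint, and Hausdorffness then gives it globally.
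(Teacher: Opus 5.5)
Your proof is correct and is essentially the argument the paper relies on. The paper simply cites Dosiev's local Schwarz inequality \cite[Corollary 5.5]{D} (together with \cite[Theorem 7.2]{D}), which yields the sandwich $b^{\ast}b\le\varphi(\psi(b)^{\ast}\psi(b))\le\varphi(\psi(b^{\ast}b))=b^{\ast}b$. The only difference is that you rederive the Schwarz inequality level by level, via the induced unital completely positive maps $\mathcal{A}_{\lambda}\to\mathcal{B}_{\delta}$, and you track the indices explicitly by choosing $\lambda$ adapted to $\delta$ for $\varphi$ before applying the inequality to $\psi$; this makes your version self-contained rather than citation-based.
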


\begin{proof}
The result follows from \cite[Theorem 7.2]{D} and \cite[Corollary 5.5]{D}.
\end{proof}

The result below may be regarded as a locally convex analogue of \cite[Theorem 2.1]{Gheondea}.

\begin{theorem}[\cite{D}, Corollary 5.5]\label{Mult.Dom}
Let $\varphi :\mathcal{A\rightarrow B}$ be a unital local
completely positive map. Then the following statements hold:

\begin{itemize}
\item[(1)] (The Schwarz Inequality) $\varphi (a)^{\ast }\varphi (a)\leq
\varphi (aa^{\ast })$ for all $a\in \mathcal{A}.$

\item[(2)] (The Multiplicativity Property) Let $a\in \mathcal{A}$. Then

\begin{itemize}
\item[(i)] $\varphi (a)^{\ast }\varphi (a)=\varphi (a^{\ast }a)$ if and only
if $\varphi (ba)=\varphi (b)\varphi (a)$ for all $b\in \mathcal{A}.$

\item[(ii)] $\varphi (a)\varphi (a)^{\ast }=\varphi (aa^{\ast })$ if and
only if $\varphi (ab)=\varphi (a)\varphi (b)$ for all $b\in \mathcal{A}$.
\end{itemize}

\item[(3)] The set $\mathcal{M}_{\varphi }:=\left\{ a\in A\mid \varphi
(a)^{\ast }\varphi (a)=\varphi (a^{\ast }a)\ \mathit{and}\ \varphi
(a)\varphi (a)^{\ast }=\varphi (aa^{\ast })\right\} $ is a unital locally $%
C^{\ast }$-subalgebra of $\mathcal{A}$, and it coincides with the largest
locally $C^{\ast }$-subalgebra $\mathcal{C}$ of $\mathcal{A}$ such that the
restricted map $\varphi \upharpoonright _{\mathcal{C}}:\mathcal{C\rightarrow
B}$ is a unital local contractive $\ast $-morphism. Moreover, $\varphi
(bac)=\varphi (b)\varphi (a)\varphi (c)$ for all $b,c\in \mathcal{M}%
_{\varphi }$ and for all $a\in \mathcal{A}.$
\end{itemize}
\end{theorem}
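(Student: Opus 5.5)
The plan is to reduce everything to the $C^{\ast}$-algebra case, working one seminorm at a time. Write $\mathcal{A}=\varprojlim_{\lambda}\mathcal{A}_{\lambda}$ and $\mathcal{B}=\varprojlim_{\delta}\mathcal{B}_{\delta}$. Fix $\delta\in\Delta$. Since $\varphi$ is local completely positive, there is an index $\lambda\in\Lambda$ such that $[\varphi(a_{ij})]\geq_{\delta}0$ whenever $[a_{ij}]\geq_{\lambda}0$, and $[\varphi(a_{ij})]=_{\delta}0$ whenever $[a_{ij}]=_{\lambda}0$. The second condition shows that $\varphi$ induces a well-defined linear map
\[
\varphi_{\delta\lambda}:\mathcal{A}_{\lambda}\to\mathcal{B}_{\delta},\qquad \varphi_{\delta\lambda}\bigl(\pi_{\lambda}^{\mathcal{A}}(a)\bigr)=\pi_{\delta}^{\mathcal{B}}(\varphi(a)).
\]
The first condition shows that $\varphi_{\delta\lambda}$ is completely positive: every positive matrix in $M_{n}(\mathcal{A}_{\lambda})$ has the form $[\pi^{\mathcal{A}}_{\lambda}(b_{ij})]^{\ast}[\pi^{\mathcal{A}}_{\lambda}(b_{ij})]$, so it lifts to a $\lambda$-positive matrix over $\mathcal{A}$. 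The map is also unital. From here I apply the classical $C^{\ast}$-results (Choi's Schwarz inequality and the multiplicative domain theorem) to $\varphi_{\delta\lambda}$.

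For (1), the classical Schwarz inequality for the unital completely positive map $\varphi_{\delta\lambda}$ gives $\pi_{\delta}^{\mathcal{B}}(\varphi(a))^{\ast}\pi_{\delta}^{\mathcal{B}}(\varphi(a))\leq\pi_{\delta}^{\mathcal{B}}(\varphi(a^{\ast}a))$. Since $\delta$ is arbitrary, the difference is positive in every quotient $\mathcal{B}_{\delta}$, hence positive in $\mathcal{B}$. Positivity in the projective limit is the same as coordinatewise positivity, because the positive cone of $\mathcal{B}$ is closed and $\mathcal{B}=\varprojlim\mathcal{B}_{\delta}$.

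I read the statement's $\varphi(aa^{\ast})$ as matching the pairing $\varphi(a)^{\ast}\varphi(a)\leq\varphi(a^{\ast}a)$, which is the standard form. The other pairing follows by applying the same argument to $a^{\ast}$.

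For (2)(i), suppose $\varphi(a)^{\ast}\varphi(a)=\varphi(a^{\ast}a)$. Then for each $\delta$, the element $\pi_{\lambda}^{\mathcal{A}}(a)$ lies in the right multiplicative domain of $\varphi_{\delta\lambda}$. The classical theorem gives $\varphi_{\delta\lambda}(xy)=\varphi_{\delta\lambda}(x)\varphi_{\delta\lambda}(y)$ with $y=\pi_{\lambda}^{\mathcal{A}}(a)$ and any $x=\pi^{\mathcal{A}}_{\lambda}(b)$. Hence $\pi^{\mathcal{B}}_{\delta}(\varphi(ba)-\varphi(b)\varphi(a))=0$ for all $\delta$, and Hausdorffness gives $\varphi(ba)=\varphi(b)\varphi(a)$. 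The converse is immediate by taking $b=a^{\ast}$. Part (ii) follows by the symmetric argument, or by applying (i) with adjoints, since $\varphi$ is $\ast$-preserving, as each $\varphi_{\delta\lambda}$ is.

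For (3), (2) shows that $\mathcal{M}_{\varphi}$ is exactly the set of $a$ with $\varphi(ba)=\varphi(b)\varphi(a)$ and $\varphi(ab)=\varphi(a)\varphi(b)$ for all $b$. This set is clearly a unital $\ast$-subalgebra. It is closed because $\varphi$ is continuous: for each $\delta$ one has $q_{\delta}(\varphi(x))\leq p_{\lambda}(x)$, since $\varphi_{\delta\lambda}$ is a unital completely positive map and hence contractive. So $\mathcal{M}_{\varphi}$ is a locally $C^{\ast}$-subalgebra, and $\varphi|_{\mathcal{M}_{\varphi}}$ is a unital local contractive $\ast$-morphism by the same estimate.

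For maximality, let $\mathcal{C}$ be a locally $C^{\ast}$-subalgebra on which $\varphi$ is a $\ast$-morphism. Then $\varphi(a^{\ast}a)=\varphi(a)^{\ast}\varphi(a)$ and $\varphi(aa^{\ast})=\varphi(a)\varphi(a)^{\ast}$ for every $a\in\mathcal{C}$, so $\mathcal{C}\subseteq\mathcal{M}_{\varphi}$. Finally, $\varphi(bac)=\varphi(b)\varphi(ac)=\varphi(b)\varphi(a)\varphi(c)$ by applying (ii) and then (i).

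The main obstacle is the first step: checking that $\varphi_{\delta\lambda}$ is well defined and completely positive on the quotient, in particular that every positive element of $M_{n}(\mathcal{A}_{\lambda})$ lifts to a $\lambda$-positive element. This holds because $\pi^{\mathcal{A}}_{\lambda}$ is surjective, so $M_{n}(\mathcal{A})\to M_{n}(\mathcal{A}_{\lambda})$ is surjective as well and square roots lift. Everything after that step is bookkeeping over $\delta$.
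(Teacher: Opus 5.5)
The paper gives no argument for this statement. It is quoted from \cite[Corollary 5.5]{D}, so there is no proof here to compare yours with. Your levelwise reduction is a correct, self-contained proof.

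\begin{itemize}
\item The kernel clause in the definition of local complete positivity, taken with $n=1$, makes $\varphi_{\delta\lambda}$ well defined.
\item Because one $\lambda$ serves every $n$, $\varphi_{\delta\lambda}$ is completely positive, not merely positive. The classical Schwarz inequality and Choi's multiplicative-domain theorem need this; for instance, the transpose on $M_2(\mathbb{C})$ is positive but violates the Schwarz inequality.
\item Each conclusion is then checked in every $\mathcal{B}_\delta$ and transferred to $\mathcal{B}$ by Hausdorffness.
\item Contractivity of the unital completely positive maps $\varphi_{\delta\lambda}$ gives both the continuity needed for closedness of $\mathcal{M}_\varphi$ and the local contractivity of $\varphi\upharpoonright_{\mathcal{M}_\varphi}$.
\end{itemize}

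What your route buys is that the result becomes a formal consequence of the classical $C^*$-algebraic theorems; nothing specific to unbounded operators or to $C^{\ast}(\mathcal{D}_{\mathcal{E}})$ is used. The step you call the main obstacle is routine. Surjectivity of $\pi^{\mathcal{A}}_\lambda$ is built into the setup, since $\mathcal{A}_\lambda=\mathcal{A}/\ker p_\lambda$ is already a complete $C^*$-algebra.

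There are two small corrections.

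First, (1) as printed is false, not just nonstandard. For $\varphi=\mathrm{id}$ on $M_2(\mathbb{C})$ and $a=E_{12}$ one has $\varphi(a)^*\varphi(a)=E_{22}\not\leq E_{11}=\varphi(aa^*)$. You should say this explicitly rather than ``reading'' the statement differently. Then state what you actually prove: $\varphi(a)^*\varphi(a)\leq\varphi(a^*a)$, and, applying this to $a^*$, $\varphi(a)\varphi(a)^*\leq\varphi(aa^*)$. This is the form consistent with (2).

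Second, the passage from positivity in every $\mathcal{B}_\delta$ to positivity in $\mathcal{B}$ is best justified directly. Closedness of the positive cone of a locally $C^*$-algebra is normally derived from this very coordinatewise description, so citing it here is close to circular. The direct argument: if $\pi^{\mathcal{B}}_\delta(b)\geq 0$ for all $\delta$, the square roots $\pi^{\mathcal{B}}_\delta(b)^{1/2}$ are compatible under the connecting $\ast$-morphisms. They therefore define a self-adjoint $c\in\varprojlim_\delta\mathcal{B}_\delta=\mathcal{B}$ with $c^2=b$.
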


\subsection{Injective local operator spaces}

Let $\mathcal{A}$ be a unital locally $C^{\ast }$-algebra with the topology
defined by the family of $C^{\ast }$-seminorms $\{p_{\lambda }\}_{\lambda
\in \Lambda }$, and let $\mathcal{B}$ be a unital locally $C^{\ast }$-algebra
with the topology defined by the family of $C^{\ast }$-seminorms $%
\{q_{\delta }\}_{\delta \in \Delta }$.

A unital subspace $\mathcal{V}$ of $\mathcal{A}$ is called a \textit{unital
local operator space} (or, unital quantum space, according to \cite{DD}) .
Let $\mathcal{V\subseteq A}$ and $\mathcal{W\subseteq B}$ be two unital
local operator spaces. A unital linear map $\varphi :\mathcal{V\rightarrow W}
$ is called 
 \begin{itemize}
 \item[i)] \textit{local completely contractive}  if for each $\delta\in\Delta$, there exists $\lambda \in\Lambda$ such that $$q^{n}_{\delta}\left([\varphi(a_{ij})] \right)\leq p^{n}_{\lambda}\left([a_{ij}] \right)$$ for all $[a_{ij}]\in M_{n}(\mathcal{V})$ and for all $n\in\mathbb{N}.$
 \item[ii)] \textit{local completely isometric} if $\Delta=\Lambda$ and $$q^{n}_{\delta}\left([\varphi(a_{ij})] \right)= p^{n}_{\lambda}\left([a_{ij}] \right)$$ for all $[a_{ij}]\in M_{n}(\mathcal{V})$ and for all $n\in\mathbb{N}.$
 
  \end{itemize}

 Note that, if $\varphi:\mathcal{V} \rightarrow \mathcal{W}$ is a unital local completely isometric map, then $\varphi(\mathcal{V})$ is a unital local operator space and $\varphi^{-1}:\varphi(\mathcal{W})\rightarrow \mathcal{V}$ is a unital local completely isometric map.

A unital linear map $\varphi:\mathcal{V} \rightarrow \mathcal{W}$ is called a   \textit{unital local complete order isomorphism} if $\varphi$ is bijective and both  $\varphi$ and $\varphi^{-1}$ are local completely isometric maps.
\medskip


A unital local operator space $\mathcal{V\subseteq A}$ is \textit{injective}, if for
any unital local operator space $\mathcal{W}$ and any unital local operator
subspace $\mathcal{W}_{0}\subseteq \mathcal{W}$, any unital local completely
contractive map from $\mathcal{W}_{0}$ to $\mathcal{V}$ extends to a unital
local completely contractive map from $\mathcal{W}$ to $\mathcal{V}$.

If $\{\mathcal{H};\mathcal{E=}\{\mathcal{H}_{\iota }\}_{\iota \in \Omega };%
\mathcal{D}_{\mathcal{E}}=\bigcup\limits_{\iota\in\Omega} \mathcal{H}_{\iota }\}$ is a graded domain,
then $C^{\ast }(\mathcal{D}_{\mathcal{E}})$ is an injective local operator
space \cite[Lemma 3.1]{DD}.

\section{The $\mathcal{R}-C^{\ast }$ -envelope of an operator space}

Let $\mathcal{A}\subseteq B(\mathcal{H})$ be a unital injective von Neumann
algebra on a Hilbert space $\mathcal{H}$. Define $\mathcal{R}$ as the
commutative subring of $\mathcal{A}$ generated by its central projections
and id$_{\mathcal{H}}$. Since $\mathcal{R}\mathcal{A}=\mathcal{A}\mathcal{R}%
\subseteq \mathcal{A}$, the algebra $\mathcal{A}$ is equipped with the
canonical $\mathcal{R}$-module structure, making it an (algebraic) $\mathcal{%
R}$-module. A linear map $\varphi :\mathcal{A}\rightarrow \mathcal{A}$ is called an $%
\mathcal{R}$-map if $\varphi \left( ea\right) =e\varphi \left( a\right) $,
for all $a\in \mathcal{A}$ and for all central projection $e$ of $\mathcal{A}
$. We denote by $B_{1}^{\mathcal{R}}\left( \mathcal{A}\right)$ the set of
all unital completely contractive $\mathcal{R}$\textit{-}maps\textit{\ }$%
\varphi :\mathcal{A}\rightarrow \mathcal{A}$. A unital completely
contractive map $\varphi :\mathcal{A}\rightarrow \mathcal{A}$ which is also
an $\mathcal{R}$-map and satisfies $\varphi \circ \varphi =\varphi $ is called an $%
\mathcal{R}$\textit{-projection}.

A unital subspace $\mathcal{V}\subseteq \mathcal{A}$ is called a unital
operator space. If $\mathcal{VR\subseteq V}$, we say that $\mathcal{V}$ is an $\mathcal{R}
$ -module.

Let $\mathcal{V\subseteq A}$ and $\mathcal{W\subseteq A}$ be two unital $%
\mathcal{R}$-modules. We say that a unital linear map $\varphi :\mathcal{%
V\rightarrow W}$ is a \textit{unital complete order }$\mathcal{R}$\textit{%
-isomorphism} if $\varphi $ is bijective and $\varphi $ and both $\varphi ^{-1}$
are both completely isometric $\mathcal{R}$-maps.

\subsection{The injective $\mathcal{R}$-envelope of an operator space 
\protect\cite{DD}}

A unital subspace $\mathcal{V}\subseteq \mathcal{A}$ is \textit{an injective 
}$\mathcal{R}$\textit{-module} if $\mathcal{V}$ is injective and $\mathcal{%
VR\subseteq V}$. A unital operator space $\mathcal{V}\subseteq \mathcal{A}$
is an injective $\mathcal{R}$\textit{-}module if and only if there exists an 
$\mathcal{R}$-projection $\varphi :\mathcal{A}\rightarrow \mathcal{A}$ such
that $\mathcal{V}=$Im$\left( \varphi \right) $, the image of $\varphi $ \cite
[Lemma 3.2]{DD}$.$

Let $\mathcal{V}\subseteq \mathcal{A}$ be a unital operator space. An $%
\mathcal{R}$\textit{-extension }of $\mathcal{V}$ is a pair $\left( \mathcal{W%
},i\right) $, where $\mathcal{W}\subseteq \mathcal{A}$ is an $\mathcal{R}$%
-module and $i$ is a complete isometry. An $\mathcal{R}$\textit{-extension 
}$\left( \mathcal{W},i\right) $ of $\mathcal{V}$ is called \textit{rigid} if the
identity on $\mathcal{W}$ is the unique completely contractive $\mathcal{R}$%
-map that extends the identity map on $i\left( \mathcal{V}\right) $.

An \textit{injective} $\mathcal{R}$\textit{-envelope} for a unital operator
space $\mathcal{V\subseteq A}$ is an $\mathcal{R}$-rigid extension $\left( 
\mathcal{W},i\right) $ of $\mathcal{V}$ such that $\mathcal{W}$ is an
injective $\mathcal{R}$-module.

If $\varphi $ is an $\mathcal{R}$-projection such that $\left. \varphi
\right\vert _{\mathcal{V}}=$id$_{\mathcal{V}},$ then $\left( \text{Im}\left(
\varphi \right) ,i\right) $ is an injective $\mathcal{R}$-extension of $%
\mathcal{V}$.

Dosiev \cite{DD} introduces a partial order and an equivalence relation on
the set $ \mathscr{E}_{V}:=\left\{ \varphi \in B_{1}^{\mathcal{R}}\left(
A\right) \mid \varphi \upharpoonright _{V}=\text{id}_{V}\right\} $ as
follows: $\varphi _{1}\preceq \varphi _{2}$ if and only if $\Vert \varphi
_{1}(a)\Vert \leq \Vert \varphi _{2}(a)\Vert $ for all $a\in A$, and $\varphi
_{1}\sim \varphi _{2}$ if and only if $\varphi _{1}\preceq \varphi _{2}$ and 
$\varphi _{2}\preceq \varphi _{1}$. An element $\varphi \in \mathscr{E}_{\mathcal{V}}$ is minimal if $\psi \preceq \varphi $ implies $\psi \sim
\varphi .$ The set of all minimal elements in $\mathscr{E}_{\mathcal{V}}$ is
denoted by $\min \mathscr{E}_{\mathcal{V}}.$

\begin{proposition}\label{Prop}
\cite[Lemma 3.3]{DD} Let $\mathcal{V\subseteq A}$ be a unital operator space. Then $\min \mathscr{E}_{\mathcal{V}}$ is non-empty and consists of
projections in $\mathscr{E}_{\mathcal{V}}$. Moreover, for $\varphi\in \min \mathscr{E}_{\mathcal{V}}$ the extension $\left( 
\text{Im}\left( \varphi \right) ,i\right) $ of $\mathcal{V}$ is $\mathcal{R}$%
-rigid, and if $\left( \mathcal{W},i\right) $ is an $\mathcal{R}$-rigid
extension such that $\mathcal{W}$ is an \textit{injective} $\mathcal{R}$%
\textit{-module, then }$\mathcal{W}=$Im$\left( \varphi \right) $ up to a 
\textit{unital complete order }$\mathcal{R}$\textit{-isomorphism.}
\end{proposition}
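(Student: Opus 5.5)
I would follow Hamana's minimal-projection argument, carried out inside the $\mathcal{R}$-module category. The plan has three parts: existence of minimal elements, showing they are idempotent and rigid, and uniqueness.

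\emph{Existence.} The set $\mathscr{E}_{\mathcal{V}}$ is nonempty, since $\mathrm{id}_{\mathcal{A}}$ belongs to it. It is also compact in the point–weak$^*$ topology. Indeed, $\mathcal{A}$ is a von Neumann algebra, so the unit ball of $CB(\mathcal{A},\mathcal{A})$ is compact in the BW-topology. In this topology the conditions unital, completely contractive, $\varphi(ea)=e\varphi(a)$ for central projections $e$, and $\varphi|_{\mathcal{V}}=\mathrm{id}_{\mathcal{V}}$ are all closed: multiplication by a fixed $e$ is weak$^*$-continuous, and the norm is weak$^*$ lower semicontinuous.

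Now take a chain $\{\varphi_\alpha\}$ for $\preceq$ and let $\varphi$ be a cluster point of it. For each $a$ we have $\|\varphi(a)\|\le \liminf \|\varphi_\alpha(a)\|\le \|\varphi_\beta(a)\|$ for every $\beta$, because the norms decrease along the chain. So $\varphi$ is a lower bound of the chain. Zorn's lemma then gives a minimal element of $\mathscr{E}_{\mathcal{V}}$.

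\emph{Minimal elements are projections and are rigid.} Let $\varphi\in\min\mathscr{E}_{\mathcal{V}}$. Form the Cesàro averages $\psi_n=\frac1n(\varphi+\varphi^2+\dots+\varphi^n)$. Each $\psi_n$ lies in $\mathscr{E}_{\mathcal{V}}$, because that set is convex and closed under composition. Each also satisfies $\psi_n\preceq\varphi$: since $\varphi\preceq\mathrm{id}$ gives $\|\varphi^k(a)\|\le\|\varphi(a)\|$, we get $\|\psi_n(a)\|\le\|\varphi(a)\|$. By minimality, $\|\psi_n(a)\|=\|\varphi(a)\|$.

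Take a BW-cluster point $\psi$ of $(\psi_n)$. Then $\psi\circ\varphi=\psi$, since $\psi_n\circ\varphi-\psi_n=\frac1n(\varphi^{n+1}-\varphi)\to0$ in norm. In addition $\psi\preceq\varphi$, so $\psi\sim\varphi$ and $\ker\psi=\ker\varphi$. Now $\psi(\varphi(a)-a)=0$ gives $\varphi(\varphi(a)-a)=0$, that is, $\varphi^2=\varphi$.

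For rigidity, let $\theta:\mathrm{Im}\varphi\to\mathrm{Im}\varphi$ be a completely contractive $\mathcal{R}$-map fixing $\mathcal{V}$. Then $\theta\circ\varphi\in\mathscr{E}_{\mathcal{V}}$. We have $\|\theta\varphi(a)\|\le\|\varphi(a)\|$, so minimality gives $\theta\varphi\sim\varphi$, and hence $\theta\varphi$ is also minimal. By the previous step, $\theta\varphi$ is an idempotent with the same kernel as $\varphi$.

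\emph{This identification is the step I expect to be most delicate.} One has to conclude $\theta=\mathrm{id}$ on $\mathrm{Im}\varphi$, i.e.\ $\theta\varphi=\varphi$. I would first try Hamana's trick: apply the Cesàro construction to $\varphi\theta\varphi$ and use that $\|\theta x\|=\|x\|$ for $x\in\mathrm{Im}\varphi$. Then for $x\in\mathrm{Im}\varphi$, $x-\theta x$ lies in $\mathrm{Im}\varphi$ and is killed by the idempotent $\theta\varphi$, whose kernel equals $\ker\varphi$. Hence $x-\theta x\in\ker\varphi\cap\mathrm{Im}\varphi=\{0\}$.

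\emph{Uniqueness.} Let $(\mathcal{W},j)$ be an $\mathcal{R}$-rigid extension with $\mathcal{W}$ an injective $\mathcal{R}$-module. By \cite[Lemma 3.2]{DD}, $\mathcal{W}=\mathrm{Im}(\rho)$ for an $\mathcal{R}$-projection $\rho$.

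By $\mathcal{R}$-injectivity, extend $j\circ i^{-1}:i(\mathcal{V})\to\mathcal{W}$ to a completely contractive $\mathcal{R}$-map $\alpha:\mathrm{Im}\varphi\to\mathcal{W}$. Likewise extend the inverse map to $\beta:\mathcal{W}\to\mathrm{Im}\varphi$. The composite $\beta\alpha$ fixes $i(\mathcal{V})$, so rigidity of $\mathrm{Im}\varphi$ gives $\beta\alpha=\mathrm{id}$. Symmetrically, rigidity of $\mathcal{W}$ gives $\alpha\beta=\mathrm{id}$. Two mutually inverse complete contractions are complete isometries, so $\alpha$ is the required unital complete order $\mathcal{R}$-isomorphism.

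The point needing care throughout is that every map produced (cluster points, averages, extensions) remains an $\mathcal{R}$-map. For cluster points this holds because the relation $\varphi(ea)=e\varphi(a)$ is preserved under point–weak$^*$ limits. For the extensions it is exactly what the $\mathcal{R}$-injectivity hypothesis provides.
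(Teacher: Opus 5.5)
Your existence argument (BW-compactness of $\mathscr{E}_{\mathcal{V}}$ plus Zorn), the Ces\`aro argument that minimal elements are idempotent, and the rigidity argument are all correct. This is the standard Hamana route, and it is also what the paper uses for the local analogue (Proposition \ref{P1}); for the present statement the paper only cites \cite[Lemma 3.3]{DD}. In the rigidity step you should say explicitly why $x-\theta x$ is killed by $\theta\varphi$: for $x=\varphi(x)$ one has $\theta\varphi(\theta x)=(\theta\varphi)^2x=\theta x$. The remark about applying the Ces\`aro construction to $\varphi\theta\varphi$ is not needed.

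The gap is in the uniqueness step. Being an injective $\mathcal{R}$-module means being injective and $\mathcal{R}$-invariant (equivalently, the range of an $\mathcal{R}$-projection). This yields completely contractive extensions, but nothing forces them to be $\mathcal{R}$-maps. Since $i(\mathcal{V})$ and $j(\mathcal{V})$ are not $\mathcal{R}$-modules, an $\mathcal{R}$-map extension $F$ of $f$ must satisfy $F(ev)=ef(v)$, and these constraints can be inconsistent. For an arbitrary complete isometry $j$ the conclusion is in fact false. Take $\mathcal{A}=M_2\oplus M_2$ with central projections $e_1,e_2$, let $\mathcal{V}=\{(a,a):a\in M_2\}$, let $\tau$ be the normalized trace, and set $\mathcal{W}=M_2\oplus\mathbb{C}1$ and $j(a,a)=(a,\tau(a)1)$. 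Any $\mathcal{R}$-map fixing $\mathcal{V}$ is the identity on $\mathcal{A}$, and any $\mathcal{R}$-map fixing $j(\mathcal{V})$ is the identity on $\mathcal{W}$. So $(\mathcal{W},j)$ is $\mathcal{R}$-rigid, and $\mathcal{W}$ is the range of the $\mathcal{R}$-projection $(a,b)\mapsto(a,\tau(b)1)$. But $\mathscr{E}_{\mathcal{V}}=\{\mathrm{id}_{\mathcal{A}}\}$, so $\mathrm{Im}(\varphi)=\mathcal{A}$ has dimension $8$, while $\mathcal{W}$ has dimension $5$. Your $\beta$ fails concretely here: for traceless $a\neq 0$, $(a,0)=j(a,a)$ must be sent to $(a,a)$, whereas any $\mathcal{R}$-map satisfies $\beta(a,0)=e_1\beta(a,0)$.

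So you must use that the embedding is the inclusion $\mathcal{V}\subseteq\mathcal{W}$, as the notation $(\mathcal{W},i)$ in the statement indicates. Then the $\mathcal{R}$-projection $\rho$ with $\mathrm{Im}(\rho)=\mathcal{W}$, which you introduce but never use, fixes $\mathcal{V}$. Put $\alpha:=\rho|_{\mathrm{Im}(\varphi)}$ and $\beta:=\varphi|_{\mathcal{W}}$; both are unital completely contractive $\mathcal{R}$-maps fixing $\mathcal{V}$. Rigidity of the two extensions gives $\beta\alpha=\mathrm{id}_{\mathrm{Im}(\varphi)}$ and $\alpha\beta=\mathrm{id}_{\mathcal{W}}$, and the rest of your argument goes through. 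The extension principle that actually holds is narrower than the one in your closing paragraph. A unital completely contractive $\mathcal{R}$-map defined on a unital $\mathcal{R}$-submodule extends to an $\mathcal{R}$-map, because any unital completely contractive extension fixes the central projections and therefore has them in its multiplicative domain.
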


Therefore, any unital operator space $\mathcal{V\subseteq A}$ has a unique
injective $\mathcal{R}$\textit{-}envelope, denoted by $\mathcal{I}_{\mathcal{R}}(\mathcal{V}).$

\subsection{The $\mathcal{R}-C^{\ast }$-envelope of an operator space}

Let $\mathcal{A}\subseteq B(\mathcal{H})$ be a unital injective von Neumann algebra and  $\mathcal{V\subseteq A}$ be a unital operator subspace. An $\mathcal{R}-C^{\ast }$-\textit{extension} of $\mathcal{V}$ is an $\mathcal{R}$-extension $\left( 
\mathcal{B},\kappa\right) $ of $\mathcal{V}$ such that $\mathcal{B}$ is the $C^{\ast }$-algebra generated by $\mathcal{R}\kappa\left( \mathcal{V}\right) $.

\begin{definition}
An\textit{\ }$\mathcal{R}-C^{\ast }$-\textit{envelope} of $\mathcal{V}$ is an $%
\mathcal{R}-C^{\ast }$-extension $\left( \mathcal{B},\kappa\right) $ with the
following universal property: Given any $\mathcal{R}-C^{\ast }$-extension $%
\left( \mathcal{C},i\right) $ of $\mathcal{V}$, there exists a unique unital
surjective $\mathcal{R}-C^{\ast }$-morphism $\pi :\mathcal{%
C\rightarrow B}$ such that $\pi \circ i=\kappa .$
\end{definition}

\begin{remark}\label{Rem1}
Let $\left( \mathcal{B},i\right) $ be an $\mathcal{R}-C^{\ast }$%
-extension of $\mathcal{V}\subseteq\mathcal{A}$.
Then, $\left( \mathcal{B},\odot \right) $ is a unital $C^{\ast }$%
-algebra. We may regard $\mathcal{B}$ as a unital  operator subspace of 
$\mathcal{A}$, and thus we can associate with it
its injective $\mathcal{R}$-envelope $\left( \mathcal{I}_{\mathcal{R}}(%
\mathcal{B}),\widetilde{\kappa }\right) $. Hence, by Proposition \ref{Prop},
we have $\mathcal{I}_{\mathcal{R}}(\mathcal{B})=$Im$(\varphi )$, where $%
\varphi \in \min \mathscr{E}_{\mathcal{B}}$, and $\left( \mathcal{I}_{\mathcal{R%
}}(\mathcal{B}),\odot _{\varphi }\right) $ becomes a unital $C^{\ast
}$-algebra \cite[Theorem 1.3.13]{Ble}. By $\mathcal{R}$-rigidity, the
identity map id$_{\mathcal{I}_{\mathcal{R}}(\mathcal{B})}$ is the unique
 completely contractive $\mathcal{R}$-map that makes the following
diagram commute.%
\begin{equation*}
\begin{tikzcd} \mathcal{B} && {\mathcal{I}_{\mathcal{R}}(\mathcal{B})} \\ \mathcal{B} && {\mathcal{I}_{\mathcal{R}}(\mathcal{B})}
\arrow["{\widetilde{\kappa}}", from=1-1, to=1-3] \arrow["{id_{\mathcal{B}}}"',
from=1-1, to=2-1] \arrow["{id_{\mathcal{I}_{\mathcal{R}}(\mathcal{B})}}", from=1-3, to=2-3]
\arrow["{\widetilde{\kappa}}", from=2-1, to=2-3] \end{tikzcd}
\end{equation*}

Hence, $\odot =\odot _{\varphi }$ and $\widetilde{\kappa }$ is a unital completely isometric $\ast $-morphism.
\end{remark}

\begin{theorem}
\label{1}  Any unital operator subspace  $\mathcal{V\subseteq A}$ has an $\mathcal{R}-C^{\ast }$-envelope. Moreover, it is unique up to a unital $\mathcal{R}-C^{\ast }$-isomorphism.
\end{theorem}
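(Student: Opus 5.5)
We follow Hamana's construction of the $C^{\ast}$-envelope, carried out inside the injective $\mathcal{R}$-envelope. Let $(\mathcal{I}_{\mathcal{R}}(\mathcal{V}),\kappa_0)$ be the injective $\mathcal{R}$-envelope given by Proposition \ref{Prop}. Thus $\mathcal{I}_{\mathcal{R}}(\mathcal{V})=\mathrm{Im}(\varphi)$ for some $\varphi\in\min\mathscr{E}_{\mathcal{V}}$, and $\mathcal{I}_{\mathcal{R}}(\mathcal{V})$ is a unital $C^{\ast}$-algebra under the Choi--Effros product $x\odot_{\varphi}y=\varphi(xy)$. Since $\varphi$ is an $\mathcal{R}$-map and $\mathcal{R}$ consists of central projections, $\varphi(ex)=e\varphi(x)$ gives $e\odot_{\varphi}x=ex$ for $e\in\mathcal{R}$. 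So $\mathcal{R}$ sits inside $(\mathcal{I}_{\mathcal{R}}(\mathcal{V}),\odot_\varphi)$ with its original module action. We then define $\mathcal{B}$ to be the $C^{\ast}$-subalgebra of $(\mathcal{I}_{\mathcal{R}}(\mathcal{V}),\odot_{\varphi})$ generated by $\mathcal{R}\kappa_0(\mathcal{V})$, with $\kappa=\kappa_0$. The candidate $\mathcal{R}$-$C^{\ast}$-envelope is $(\mathcal{B},\kappa)$. It is an $\mathcal{R}$-extension because $\kappa_0$ is a complete isometry and $\mathcal{B}$ is an $\mathcal{R}$-module.

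\textbf{Existence of $\pi$.} Let $(\mathcal{C},i)$ be any $\mathcal{R}$-$C^{\ast}$-extension. The map $\kappa\circ i^{-1}:i(\mathcal{V})\to\mathcal{I}_{\mathcal{R}}(\mathcal{V})$ is a unital complete contraction, and $\mathcal{I}_{\mathcal{R}}(\mathcal{V})$ is an injective $\mathcal{R}$-module. We extend this map to a unital completely contractive $\mathcal{R}$-map $\psi:\mathcal{C}\to\mathcal{I}_{\mathcal{R}}(\mathcal{V})$. To get the $\mathcal{R}$-linearity we compose an arbitrary extension $\mathcal{A}\to\mathcal{A}$ with the $\mathcal{R}$-projection $\varphi$ (\cite[Lemma 3.2]{DD}), after arranging $\mathcal{R}$-linearity on $\mathcal{C}$. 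One way to arrange it is to average or compress over the central projections: replace $\psi$ by $\sum e\psi(e\,\cdot)$ over finite partitions of unity by elements of $\mathcal{R}$. Since $\mathcal{C}$ is generated by $\mathcal{R}i(\mathcal{V})$, this is harmless. Next we show $\psi$ is a $\ast$-homomorphism into $(\mathcal{I}_{\mathcal{R}}(\mathcal{V}),\odot_\varphi)$, by Hamana's rigidity argument. As in Remark \ref{Rem1}, embed $\mathcal{C}$ into its own injective $\mathcal{R}$-envelope, and extend $\kappa_0^{-1}$-type maps to get a completely contractive $\mathcal{R}$-map $\theta:\mathcal{I}_{\mathcal{R}}(\mathcal{V})\to\mathcal{I}_{\mathcal{R}}(\mathcal{C})$ with $\theta\circ\kappa=i$. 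Then $\psi\circ\theta$ fixes $\kappa(\mathcal{V})$, and $\mathcal{R}$-rigidity forces $\psi\circ\theta=\mathrm{id}$. Applying Lemma \ref{Lem} and Theorem \ref{Mult.Dom} (in the $C^\ast$-case) to the ucp pair $(\psi,\theta)$, every element of $i(\mathcal{V})$ lies in the multiplicative domain of $\psi$. So does each $e\in\mathcal{R}$, since $\psi(e)=e$ is a projection. Hence the whole generated $C^{\ast}$-algebra $\mathcal{C}$ lies in the multiplicative domain, and $\pi:=\psi$ is a unital $\ast$-homomorphism and an $\mathcal{R}$-map with $\pi\circ i=\kappa$. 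Its image is a $C^{\ast}$-algebra containing $\mathcal{R}\kappa(\mathcal{V})$ and contained in $\mathcal{B}$, so it equals $\mathcal{B}$, i.e.\ $\pi$ is surjective. Uniqueness of $\pi$ is clear: a $\ast$-homomorphism that is an $\mathcal{R}$-map is determined on the generators $\mathcal{R}i(\mathcal{V})$.

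\textbf{Uniqueness of the envelope.} If $(\mathcal{B}',\kappa')$ is another $\mathcal{R}$-$C^{\ast}$-envelope, the universal properties give surjections $\pi:\mathcal{B}'\to\mathcal{B}$ and $\pi':\mathcal{B}\to\mathcal{B}'$ compatible with $\kappa,\kappa'$. The compositions fix generators, so they are identities. Hence $\pi$ is a unital $\mathcal{R}$-$C^{\ast}$-isomorphism.

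The main obstacle is producing $\theta$ and $\psi$ as genuine $\mathcal{R}$-maps, so that $\mathcal{R}$-rigidity applies, and checking that the Choi--Effros product restricted to $\mathcal{R}$ agrees with the original multiplication. We would handle this first by working entirely inside $\mathcal{A}$ and using $\mathcal{R}$-projections from $\min\mathscr{E}$ as in Proposition \ref{Prop}.
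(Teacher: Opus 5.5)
Your route is the paper's: build $C^{\ast}(\mathcal{R}\kappa(\mathcal{V}))$ inside $(\mathcal{I}_{\mathcal{R}}(\mathcal{V}),\odot_{\varphi})$, extend maps by injectivity, get a left inverse from $\mathcal{R}$-rigidity, and deduce multiplicativity from the Schwarz inequality. You organize the last step a bit differently. You put $i(\mathcal{V})$ and the projections $e\in\mathcal{R}$ (using $\psi(e)=e$) directly into the multiplicative domain. The paper instead shows that $\widetilde{\pi}$ is multiplicative on $C^{\ast}(\rho(\mathcal{I}_{\mathcal{R}}(\mathcal{V})))$ and checks that $\widetilde{\kappa}(\mathcal{B})$ lies there. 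Your variation is fine. However, you must first extend $\psi$ to $\mathcal{I}_{\mathcal{R}}(\mathcal{C})$, as the paper does with $\widetilde{\pi}$, before $\psi\circ\theta$ makes sense.

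\textbf{The gap.} The genuine problem is the step you flag yourself: obtaining a unital completely contractive $\mathcal{R}$-map $\psi$ with $\psi\circ i=\kappa$, and likewise $\theta$.

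\emph{Your compression does not work.} The map $x\mapsto\sum_k e_k\psi(e_kx)$ does not preserve $\psi\circ i=\kappa$. For $x=i(v)$ one has $\psi(x)=\sum_{j,k}e_k\psi(e_jx)$, and compression discards the cross terms $e_k\psi(e_jx)$ with $j\neq k$. These terms are uncontrolled, because $e_jx\notin i(\mathcal{V})$, so $\psi(e_jx)$ is not prescribed; the compressed map need not even be unital. The justification ``harmless since $\mathcal{C}$ is generated by $\mathcal{R}i(\mathcal{V})$'' is circular: compatibility of $\kappa\circ i^{-1}$ with the $\mathcal{R}$-action is exactly what is in question.

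\emph{No argument can close this step for arbitrary $i$.} Let $\mathcal{A}=\mathcal{V}=\ell^{\infty}_{2}$ with minimal projections $e_1,e_2$, so that $\mathcal{I}_{\mathcal{R}}(\mathcal{V})=\mathcal{A}$ and $\kappa=\mathrm{id}$. Let $i$ be the coordinate flip. Then $(\mathcal{A},i)$ is an $\mathcal{R}$-$C^{\ast}$-extension. Yet a unital $\mathcal{R}$-map $\psi$ with $\psi\circ i=\kappa$ would give $e_1=e_1\psi(1)=\psi(e_1)=\psi(i(e_2))=\kappa(e_2)=e_2$. The same computation, run against the extensions $(\mathcal{A},\mathrm{id})$ and $(\mathcal{A},i)$, shows that for this $\mathcal{V}$ the universal property, read literally, fails for every candidate envelope.

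\emph{Relation to the paper.} The paper's proof simply asserts this extension from the injectivity of $\mathcal{I}_{\mathcal{R}}(\mathcal{V})$. Your proposal therefore inherits this weak point rather than repairing it.

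\emph{What is needed.} The argument requires an $\mathcal{R}$-compatibility hypothesis on the extensions.
\begin{itemize}
\item If $i$ and $\kappa$ are inclusions, the minimal $\mathcal{R}$-projections do the job directly. Take $\psi=\varphi\upharpoonright_{\mathcal{C}}$, extended as $\varphi\upharpoonright_{\mathcal{I}_{\mathcal{R}}(\mathcal{C})}$, and $\theta=\varphi'\upharpoonright_{\mathcal{I}_{\mathcal{R}}(\mathcal{V})}$ with $\varphi'\in\min\mathscr{E}_{\mathcal{C}}$.
\item More generally, suppose $\kappa\circ i^{-1}$ already extends to a completely contractive $\mathcal{R}$-map on $\operatorname{span}\mathcal{R}i(\mathcal{V})$. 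Then averaging $u^{\ast}\psi(u\,\cdot)$ over the unitary group of the center with an invariant mean gives an $\mathcal{R}$-linear extension without changing the prescribed values. This is the correct form of the ``averaging'' idea you had in mind.
\end{itemize}
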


\begin{proof}
Let $\left( \mathcal{I}_{\mathcal{R}}(\mathcal{V}),\kappa \right) $ be an
injective $\mathcal{R}$-envelope of $\mathcal{V}$. By \cite[Lemma 3.3]{DD}, there
exists $\psi \in \min \mathscr{E}_{\mathcal{V}}$ such that $\mathcal{I}_{%
\mathcal{R}}(\mathcal{V})=$Im$({\psi }).$ Moreover, $\left( \mathcal{I}_{%
\mathcal{R}}(\mathcal{V}),\odot _{\psi }\right) $ is a unital $C^{\ast }$%
-algebra with the multiplication given by $v_{1}\odot _{\psi }v_{2}:=\psi
(v_{1}v_{2})$, the involution and the $C^{\ast }$-norm are inherited from $%
\mathcal{A}$ \cite[Theorem 1.3.13]{Ble}, and it is also an $\mathcal{R}$-module.

Let $C_{e\mathcal{R}}^{\ast }(\mathcal{V}):=C^{\ast }(\mathcal{R}\kappa (%
\mathcal{V}))\subseteq \mathcal{I}_{\mathcal{R}}(\mathcal{V})$ be the $C^{\ast
}$-algebra generated by $\mathcal{R}\kappa (\mathcal{V}).$ Then the pair $\left( C_{e%
\mathcal{R}}^{\ast }(\mathcal{V}),\kappa \right) $ is an $\mathcal{R}%
-C^{\ast }$-extension of $\mathcal{V}.$ We will show that this extension satisfies the
universal property.

Let $\left( \mathcal{B},i\right) $ be an $\mathcal{R}-C^{\ast }$-extension
of $\mathcal{V}$, and let $\left( \mathcal{I}_{\mathcal{R}}(\mathcal{B}),\tilde{\kappa}%
\right) $ be an injective $\mathcal{R}$-envelope of $\mathcal{B}$. By \cite[%
Lemma 3.3]{DD}, there exists $\varphi \in \min \mathscr{E}_{\mathcal{B}}$ such
that $\mathcal{I}_{\mathcal{R}}(\mathcal{B})=$Im$(\varphi )$. Then the pair $\left( 
\mathcal{I}_{\mathcal{R}}(\mathcal{B}),\odot _{\varphi }\right) $ is a
unital $C^{\ast }$-algebra and also an $\mathcal{R}$-module, while $\mathcal{B}$
is a unital $C^{\ast }$-subalgebra of $\mathcal{I}_{\mathcal{R}}(\mathcal{B}%
).$

Since $\mathcal{I}_{\mathcal{R}}(\mathcal{V})$ is an injective $\mathcal{R}$%
-module, the map $\kappa \circ i^{-1}:i(\mathcal{V})\rightarrow \mathcal{I}_{%
\mathcal{R}}(\mathcal{V})$ can be extended to a unital completely
contractive $\mathcal{R}$-map $\pi :\mathcal{B}\rightarrow \mathcal{I}_{%
\mathcal{R}}(\mathcal{V})$ such that $\pi \circ i=\kappa .$ Similarly, the map $\pi
\circ \widetilde{\kappa }^{-1}:\widetilde{\kappa }(\mathcal{B})\subseteq 
\mathcal{I}_{\mathcal{R}}(\mathcal{B})\rightarrow \mathcal{I}_{\mathcal{R}}(%
\mathcal{V})$ can be extended to a unital completely contractive $\mathcal{R}
$-map $\widetilde{\pi }:\mathcal{I}_{\mathcal{R}}(\mathcal{B})\rightarrow 
\mathcal{I}_{\mathcal{R}}(\mathcal{V})$ such that $\widetilde{\pi }\circ 
\widetilde{\kappa }=\pi .$

Since $\mathcal{I}_{\mathcal{R}}(\mathcal{B})$ is an injective  $%
\mathcal{R}$-module, the map $\widetilde{\kappa }\circ i\circ \kappa ^{-1}:\kappa (%
\mathcal{V})\subseteq \mathcal{I}_{\mathcal{R}}(\mathcal{V})\rightarrow 
\mathcal{I}_{\mathcal{R}}(\mathcal{B})$ can be extended to a unital completely
contractive $\mathcal{R}$-map $\rho :\mathcal{I}_{\mathcal{R}}(\mathcal{V}%
)\rightarrow \mathcal{I}_{\mathcal{R}}(\mathcal{B})$ such that $\rho \circ
\kappa =\widetilde{\kappa }\circ i.$%
\begin{equation*}
\begin{tikzcd} \mathcal{V} && {\big( \mathcal{B}, \odot_{\varphi} \big) } && {\big( \mathcal{I}_{\mathcal{R}}(\mathcal{B}),
\odot_{\varphi} \big)} \\ \\ { \mathcal{I}_{\mathcal{R}}(\mathcal{V}) } \arrow["i", hook, from=1-1, to=1-3]
\arrow["\kappa"', hook, from=1-1, to=3-1] \arrow["{\widetilde{\kappa}}",
hook, from=1-3, to=1-5] \arrow["\pi"', from=1-3, to=3-1]
\arrow["{\widetilde{\pi}}"', shift left, from=1-5, to=3-1] \arrow["\rho"',
shift right=3, bend right=25, from=3-1, to=1-5] \end{tikzcd}
\end{equation*}

Then $\widetilde{\pi }\circ \rho \circ \kappa =\widetilde{\pi }\circ 
\widetilde{\kappa }\circ i=\pi \circ \kappa =\kappa $, which means that $\widetilde{%
\pi }\circ \rho \upharpoonright _{\kappa (\mathcal{V})}=$id$_{\kappa (%
\mathcal{V})}$, and since $\mathcal{I}_{\mathcal{R}}(\mathcal{V})$ is an
injective $\mathcal{R}$-envelope of $\mathcal{V}$, it follows that $%
\widetilde{\pi }\circ \rho =$id$_{\mathcal{I}_{\mathcal{R}}(\mathcal{V})}$.
On the other hand, by \cite[Corollary 5.1.2]{ER}, the maps $\widetilde{\pi }$
and $\rho $ are unital completely positive, and by Schwarz inequality (\cite[%
Corollary 5.2.2]{ER}), we have%
\begin{equation*}
y^{\ast }\odot _{\psi }y=\widetilde{\pi }\left( \rho \left( y^{\ast }\odot
_{\psi }y\right) \right) \geq \widetilde{\pi }\left( \rho \left( y\right)
^{\ast }\odot _{\varphi }\rho \left( y\right) \right) \geq \widetilde{\pi }%
(\rho (y))^{\ast }\odot _{\psi }\widetilde{\pi }\left( \rho (y)\right)
=y^{\ast }\odot _{\psi }y
\end{equation*}%
for all $y\in \mathcal{I}_{\mathcal{R}}(\mathcal{V})$, and so $$\widetilde{\pi }\left( \rho \left( y\right)
^{\ast }\odot _{\varphi }\rho \left( y\right) \right)=y^{\ast }\odot _{\psi }y=\widetilde{\pi }(\rho (y))^{\ast}\odot _{\psi }\widetilde{%
\pi }\left( \rho (y)\right).$$  Similarly, we get
that 
\begin{equation*}
\widetilde{\pi }\left( \rho (y)\odot _{\varphi }\rho (y)^{\ast }\right)
=y\odot _{\psi }y^{\ast }=\widetilde{\pi }(\rho (y))\odot _{\psi }\widetilde{%
\pi }\left( \rho (y)\right) ^{\ast }
\end{equation*}%
for all $y\in \mathcal{I}_{\mathcal{R}}(\mathcal{V})$. Then, by \cite[%
Corollary 5.2.2]{ER} , $\widetilde{\pi }\upharpoonright _{C^{\ast }\left(
\rho (\mathcal{I}_{\mathcal{R}}(\mathcal{V}))\right) }:C^{\ast }\left( \rho (%
\mathcal{I}_{\mathcal{R}}(\mathcal{V}))\right) \rightarrow \mathcal{I}_{%
\mathcal{R}}(\mathcal{V})$ is a unital $\mathcal{R}-C^{\ast }$-morphism,
where $C^{\ast }\left( \rho (\mathcal{I}_{\mathcal{R}}(\mathcal{V}))\right) $
is the unital $C^{\ast }$-subalgebra of $\mathcal{I}_{\mathcal{R}}(\mathcal{B%
})$ generated by $\rho (\mathcal{I}_{\mathcal{R}}(\mathcal{V}))$.

 On the other hand, since $\mathcal{B}$ $=C^{\ast }(\mathcal{R}i(\mathcal{V}))$ and $\widetilde{\kappa }
$ is a unital completely isometric $\mathcal{R}-C^{\ast }$-morphism (see Remark \ref{Rem1})), we have
\begin{equation*}
\widetilde{\kappa }(\mathcal{B})=\widetilde{\kappa }(C^{\ast }(\mathcal{R}i(%
\mathcal{V}))=C^{\ast }\left( \mathcal{R}\rho (\kappa (\mathcal{V}))\right)
=C^{\ast }\left( \rho (\mathcal{R}\kappa (\mathcal{V}))\right) \subseteq
C^{\ast }\left( \rho (\mathcal{I}_{\mathcal{R}}(\mathcal{V}))\right) .
\end{equation*}%
From above and since $\pi =\widetilde{\pi }\circ \widetilde{\kappa }$, it follows that $%
\pi $ is a unital $\mathcal{R}-C^{\ast }$-morphism from $\mathcal{B}$ to $%
\mathcal{I}_{\mathcal{R}}(\mathcal{V})$. Moreover, 
\begin{equation*}
\pi (\mathcal{B})=\overline{\pi (\mathcal{B})}=\overline{\pi \left( C^{\ast
}(\mathcal{R}i(\mathcal{V}))\right) }=C^{\ast }\left( \mathcal{R}\kappa (%
\mathcal{V})\right) =C_{e\mathcal{R}}^{\ast }(\mathcal{V}).
\end{equation*}%
Thus, we have showed that there exists a unital surjective $\mathcal{R}%
-C^{\ast }$-morphism $\pi :\mathcal{B}\rightarrow C_{e\mathcal{R}}^{\ast }(%
\mathcal{V})$ such that $\pi \circ i=\kappa $.

The uniqueness of the unital surjective $\mathcal{R}-C^{\ast }$-morphism $%
\pi $ follows from the definition of the unital  $C^{\ast }$-algebras 
$\mathcal{B}$ and $C_{e\mathcal{R}}^{\ast }(\mathcal{V})$ and the relation $%
\pi \circ i=\kappa .$

Suppose that $\left( \mathcal{C},j\right) $ is another $\mathcal{R}%
-C^{\ast }$-envelope of $\mathcal{V}.$ Then, by the universal property,
there exist the unital surjective $\mathcal{R}-C^{\ast }$-morphisms $\pi
_{1}:\ \mathcal{C}\rightarrow C_{e\mathcal{R}}^{\ast }(\mathcal{V})$ such
that $\pi _{1}\circ j=\kappa $ and $\pi _{2}:\ C_{e\mathcal{R}}^{\ast }(%
\mathcal{V})\rightarrow \mathcal{C}$ such that $\pi _{2}\circ \kappa=j$, respectively. Then $%
\pi _{1}\circ \pi _{2}\circ \kappa=\kappa$ and $\pi _{2}\circ \pi _{1}\circ j$ $%
= $ $j.$  

\[\begin{tikzcd}
	{\mathcal{V}} && {C_{e\mathcal{R}}^{\ast }(\mathcal{V})} \\
	{\mathcal{C}}
	\arrow["\kappa", from=1-1, to=1-3]
	\arrow["j"', from=1-1, to=2-1]
	\arrow["{\pi_{2}}", shift left, dashed, from=1-3, to=2-1]
	\arrow["{\pi_{1}}", shift left, dashed, from=2-1, to=1-3]
\end{tikzcd}\]

From these relations and the definitions of $C_{e\mathcal{R}}^{\ast
}(\mathcal{V})$ and $\mathcal{C}$, and taking into account that $\pi _{1}$
and $\pi _{2}$ are $\mathcal{R}-C^{\ast }$-morphisms, we conclude that $\pi
_{1}$ is an $\mathcal{R}-C^{\ast }$-isomorphism.

\end{proof}

\section{The $\mathcal{R}-C^{\ast }$ -envelope of a local operator space}

\subsection{The injective $\mathcal{R}$-envelope of a local operator space}

Let $\left\{ \mathcal{H};\mathcal{E};\mathcal{D}_{\mathcal{E}}\right\} $ be
a commutative quantum domain, where $\mathcal{E}:=\left\{ \mathcal{H}_{\iota
}:\iota \in \Omega \right\} $ is an upward filtered family of closed
subspaces such that the union space $\mathcal{D}_{\mathcal{E}%
}:=\bigcup\limits_{\iota \in \Omega }\mathcal{H}_{\iota }$ is dense in $%
\mathcal{H}$. Note that the quantized family $\mathcal{E}:=\left\{ \mathcal{H%
}_{\iota }:\iota \in \Omega \right\} $ determines an upward filtered,
mutually commuting family of projections $\left\{ P_{\iota }:\iota \in
\Omega \right\} $ in $B(\mathcal{H})$, where $P_{\iota }$ denotes the
orthogonal projection of $\mathcal{H}$ onto the closed subspace $\mathcal{H}%
_{\iota }$. Then 
\begin{equation*}
\{\{P_{\iota }\}_{\iota \in \Omega }\}^{\prime }:=\left\{ T\in B(\mathcal{H}%
)\mid TP_{\iota }=P_{\iota }T,\ (\forall )\ \iota \in \Omega \right\}
=b\left( C_{\mathcal{E}}^{\ast }(\mathcal{D})\right)
\end{equation*}%
is a unital injective von Neumann algebra on $\mathcal{H}$ (see \cite[IV.
2.2.7]{Bla}). Let $\mathcal{R}$ be the commutative subring of $B(\mathcal{H}%
) $ generated by $\{P_{\iota }\}_{\iota \in \Omega }\cup \{1_{\mathcal{H}}\}$%
. A unital local subspace $\mathcal{V}\subseteq C^{\ast }(\mathcal{D}_{%
\mathcal{E}})$ satisfying $\mathcal{R}\mathcal{V}\subseteq \mathcal{V}$ (or equivalently, $\mathcal{V}
\mathcal{R}\subseteq \mathcal{V}$) is called an \textit{(algebraic) local }$\mathcal{R}
$\textit{-module}.

Let $\mathcal{V}$ and $\mathcal{W}$ be two local $\mathcal{R}$-modules in $%
C^{\ast }(\mathcal{D}_{\mathcal{E}})$. A linear map $\varphi :\mathcal{%
V\rightarrow W}$ is called an $\mathcal{R}$\textit{-map} if $%
\varphi \left( P_{\iota }T\right) =P_{\iota }\varphi (T)$, for all $\iota \in
\Omega $ and $T\in \mathcal{V}.$

A unital subspace $\mathcal{V}\subseteq C^{\ast }(\mathcal{D}_{\mathcal{E}})$
is called a \textit{unital local operator space. }A unital local operator
space $\mathcal{V}\subseteq C^{\ast }(\mathcal{D}_{\mathcal{E}})$ is called
an \textit{injective local }$\mathcal{R}$\textit{-module} if $\mathcal{V}$
is injective and satisfies $\mathcal{RV\subseteq V}$ (see \cite{DD}). Note
that $\mathcal{V}$ is an injective local $\mathcal{R}$-module if and only if
it is the range of a unital local completely contractive $\mathcal{R}$%
-module projection $\varphi :C^{\ast }(\mathcal{D}_{\mathcal{E}})\rightarrow
C^{\ast }(\mathcal{D}_{\mathcal{E}})$ (\cite[Proposition 4.1]{DD}).

Dosi \cite[Proposition 4.1]{DD} shows that if $C^{\ast }(\mathcal{D}_{%
\mathcal{E}})$ is injective, then a unital local operator space $\mathcal{V}%
\subseteq C^{\ast }(\mathcal{D}_{\mathcal{E}})$ is an injective local $%
\mathcal{R}$-module if and only if $b(\mathcal{V})\subseteq b\left( C^{\ast
}(\mathcal{D}_{\mathcal{E}})\right) $ is an injective\textit{\ }$\mathcal{R}$%
-module and $\overline{b(\mathcal{V})}=\mathcal{V}$, the closure of $b(\mathcal{V})$ in $C^{\ast }(\mathcal{D}_{\mathcal{E}})$. He defines the 
\textit{injective }$\mathcal{R}$\textit{-envelope} of a unital local
operator space $\mathcal{V}\subseteq C^{\ast }(\mathcal{D}_{\mathcal{E}}),$
with $C^{\ast }(\mathcal{D}_{\mathcal{E}})$ an injective local operator
space, as the closure of the injective $\mathcal{R}$\textit{-}envelope for
the unital operator space $b(\mathcal{V})\subseteq b\left( C^{\ast }(%
\mathcal{D}_{\mathcal{E}})\right),$ namely $\mathcal{I}_{\mathcal{R}}(\mathcal{V})=%
\overline{\mathcal{I}_{\mathcal{R}}(b(\mathcal{V}))}$.

In this section, by analogy with the normed case and following the
construction of Hamana-Ruan (see, for instance, \cite{H}, \cite{Ble}, and \cite{Pau}), we define the injective $\mathcal{R}$-envelope
for a unital local operator space $\mathcal{V}\subseteq C_{\mathcal{E}}^{\ast
}(\mathcal{D})$ as a local\textit{\ }$\mathcal{R}$-rigid extension $\left( 
\mathcal{W},i\right) $ of $\mathcal{V}$, where $\mathcal{W}$
is an injective $\mathcal{R}$-module, and show that this definition coincides
with the one given by Dosi \cite{DD} (see Proposition \ref{Prop. def-equiv}%
).
\medskip

Suppose that $C^{\ast }(\mathcal{D}_{\mathcal{E}})$ is an injective space
and $\mathcal{V}\subseteq C^{\ast }(\mathcal{D}_{\mathcal{E}})$ is a unital
local operator space.

\begin{definition}
A \textit{local }$\mathcal{R}$\textit{-extension} of $\mathcal{V}$ is a
local $\mathcal{R}$-module $\mathcal{W}\subseteq C^{\ast }(\mathcal{D}_{%
\mathcal{E}})$ together with a unital local completely isometric map $i:%
\mathcal{V\rightarrow W}$.
\end{definition}

\begin{definition}
We say that a local $\mathcal{R}$-extension $\left( \mathcal{W},i\right) $
of $\mathcal{V}$ is a \textit{local }$\mathcal{R}$\textit{-rigid extension}
of $\mathcal{V}$ if id$_{\mathcal{W}}$ is the only unital local completely
contractive $\mathcal{R}$-map that extends the identity map on $\mathcal{V}$.
\end{definition}

We denote by $B_{1}^{\mathcal{R}}\left( C^{\ast }(\mathcal{D}_{\mathcal{E}%
})\right) $ \textit{the set of all unital local completely contractive }$%
\mathcal{R}$\textit{-maps }$\varphi :C^{\ast }(\mathcal{D}_{\mathcal{E}%
})\rightarrow C^{\ast }(\mathcal{D}_{\mathcal{E}}).$

\begin{remark}
\label{R. 3.3.}We note that the following statements hold.

\begin{enumerate}
\item If $\varphi \in B_{1}^{\mathcal{R}}\left( C^{\ast }(\mathcal{D}_{%
\mathcal{E}})\right) $, then $\varphi (TP_{\iota })=\varphi (T)P_{\iota }$, for all $\iota \in \Omega $, and consequently,  
\begin{equation*}
\Vert \varphi (T)\Vert _{\iota }=\Vert \varphi (T)P_{\iota }\Vert =\Vert
\varphi (TP_{\iota })\Vert \leq \Vert TP_{\iota }\Vert =\Vert T\Vert _{\iota
}
\end{equation*}%
for all $T\in C^{\ast }(\mathcal{D}_{\mathcal{E}})\ $ and all $\iota \in
\Omega .$

\item If $\varphi :C^{\ast }(\mathcal{D}_{\mathcal{E}})\rightarrow C^{\ast }(%
\mathcal{D}_{\mathcal{E}})$ is a unital local completely contractive map,
then $\varphi \left( b\left( C^{\ast }(\mathcal{D}_{\mathcal{E}})\right)
\right) \subseteq b\left( C^{\ast }(\mathcal{D}_{\mathcal{E}})\right) $,
since $\Vert \varphi (T)\Vert _{\iota }\leq \Vert T\Vert $ for all $T\in
b\left( C^{\ast }(\mathcal{D}_{\mathcal{E}})\right) $ and all $\iota \in
\Omega .$
\end{enumerate}
\end{remark}

Let $\varphi \in B_{1}^{\mathcal{R}}\left( C^{\ast }(\mathcal{D}_{\mathcal{E}%
})\right) $. Then the map $\varphi \upharpoonright _{b\left( C^{\ast }(%
\mathcal{D}_{\mathcal{E}})\right) }:b\left( C^{\ast }(\mathcal{D}_{\mathcal{E%
}})\right) \rightarrow b\left( C^{\ast }(\mathcal{D}_{\mathcal{E}})\right) $
is a unital completely contractive $\mathcal{R}$-map. Consequently, $%
\varphi \upharpoonright _{b\left( C^{\ast }(\mathcal{D}_{\mathcal{E}%
})\right) }\in B_{1}^{\mathcal{R}}\left( b\left( C^{\ast }(\mathcal{D}_{%
\mathcal{E}})\right) \right) .$
Thus, there exists a map $\Phi :B_{1}^{\mathcal{R}}\left( C^{\ast }(\mathcal{%
D}_{\mathcal{E}})\right) \rightarrow B_{1}^{\mathcal{R}}\left( b\left(
C^{\ast }(\mathcal{D}_{\mathcal{E}})\right) \right) $ defined by 
\begin{equation*}
\Phi (\varphi )=\varphi \upharpoonright _{b\left( C^{\ast }(\mathcal{D}_{%
\mathcal{E}})\right) }\text{.}
\end{equation*}

\begin{lemma}
\label{Lemma 1} The map $\Phi $ defined above establishes a bijective
correspondence between $B_{1}^{\mathcal{R}}\left( C^{\ast }(\mathcal{D}_{%
\mathcal{E}})\right) $ and $B_{1}^{\mathcal{R}}\left( b\left( C^{\ast }(%
\mathcal{D}_{\mathcal{E}})\right) \right) $.
\end{lemma}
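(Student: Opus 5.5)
We must show that $\Phi$ is injective and surjective.

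\emph{Injectivity.} Let $\varphi\in B_{1}^{\mathcal{R}}(C^{\ast}(\mathcal{D}_{\mathcal{E}}))$ and $T\in C^{\ast}(\mathcal{D}_{\mathcal{E}})$. For each $\iota\in\Omega$, the operator $TP_{\iota}$ (more precisely, its bounded extension $P_{\iota}TP_{\iota}\in B(\mathcal{H})$) commutes with all $P_{\kappa}$, because the projections commute. Hence $TP_{\iota}\in b(C^{\ast}(\mathcal{D}_{\mathcal{E}}))$. By Remark \ref{R. 3.3.}(1), $\varphi(T)P_{\iota}=\varphi(TP_{\iota})$. Moreover, an element $S$ of $C^{\ast}(\mathcal{D}_{\mathcal{E}})$ is determined by the family $\{SP_{\iota}\}_{\iota}$, since $\mathcal{D}_{\mathcal{E}}=\bigcup_{\iota}\mathcal{H}_{\iota}$. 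Therefore $\varphi$ is determined by its restriction to the bounded part, and $\Phi$ is injective.

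\emph{Surjectivity.} Given $\psi\in B_{1}^{\mathcal{R}}(b(C^{\ast}(\mathcal{D}_{\mathcal{E}})))$, we must extend it. Define $\varphi(T)$ on $\mathcal{D}_{\mathcal{E}}$ by
\begin{equation*}
\varphi(T)\xi:=\psi(TP_{\iota})\xi,\qquad \xi\in\mathcal{H}_{\iota}.
\end{equation*}
First check that this is well defined. If $\iota\le\kappa$, then $P_{\iota}P_{\kappa}=P_{\iota}$, and the $\mathcal{R}$-map property gives
\begin{equation*}
\psi(TP_{\iota})=\psi(TP_{\kappa}P_{\iota})=\psi(TP_{\kappa})P_{\iota}.
\end{equation*}
Here we use that the $\mathcal{R}$-property holds on both sides: $\psi(P_{\iota}S)=P_{\iota}\psi(S)$, and since $P_{\iota}$ is central in the commutant, $SP_{\iota}=P_{\iota}S$. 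Hence the two definitions agree on $\mathcal{H}_{\iota}$. Using directedness of $\Omega$, well-definedness on all of $\mathcal{D}_{\mathcal{E}}$ follows. Linearity is immediate.

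Next, $\varphi(T)\in C^{\ast}(\mathcal{D}_{\mathcal{E}})$. By the computation above, $\varphi(T)P_{\iota}=\psi(TP_{\iota})P_{\iota}$ is bounded and commutes with $P_{\iota}$, and it leaves $\mathcal{H}_{\iota}$ invariant.

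Unitality and the $\mathcal{R}$-property follow from those of $\psi$, since $P_{\kappa}TP_{\iota}=P_{\kappa}(TP_{\iota})$.

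For bounded $T$, $\psi(TP_{\iota})=\psi(T)P_{\iota}$, so $\varphi(T)=\psi(T)$ on $\mathcal{D}_{\mathcal{E}}$, and $\varphi$ extends $\psi$.

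\emph{Local complete contractivity.} This is the main point to verify carefully. For $[T_{ij}]\in M_{n}(C^{\ast}(\mathcal{D}_{\mathcal{E}}))$,
\begin{equation*}
q_{\iota}^{n}([\varphi(T_{ij})])=\Vert[\psi(T_{ij}P_{\iota})P_{\iota}]\Vert=\Vert[\psi(T_{ij}P_{\iota})]\,(P_{\iota}\otimes 1_{n})\Vert\le\Vert[\psi(T_{ij}P_{\iota})]\Vert\le\Vert[T_{ij}P_{\iota}]\Vert=q_{\iota}^{n}([T_{ij}]),
\end{equation*}
where the last inequality uses that $\psi$ is completely contractive. So we may take $\lambda=\iota$ in the definition, and $\varphi$ is local completely contractive.

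The main obstacle is justifying that $TP_{\iota}$, which is a priori defined only on $\mathcal{D}_{\mathcal{E}}$, is a genuine element of $b(C^{\ast}(\mathcal{D}_{\mathcal{E}}))$. To see this, identify it via its bounded extension $P_{\iota}TP_{\iota}$ with an element of $\{P_{\iota}\}'$. This uses $P_{\iota}T\subseteq TP_{\iota}$ together with the commutativity of the domain.
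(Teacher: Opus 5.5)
Your proof is correct, but it follows a more explicit route than the paper. The paper proves injectivity from the seminorm estimate of Remark \ref{R. 3.3.}(2) together with density of $b(C^{\ast}(\mathcal{D}_{\mathcal{E}}))$ in $C^{\ast}(\mathcal{D}_{\mathcal{E}})$. It proves surjectivity by noting that the $\mathcal{R}$-property makes $\psi$ contractive for every $\Vert\cdot\Vert_{\iota}$, and then extending by continuity to the completion. It leaves to continuity the facts that the extension is unital, an $\mathcal{R}$-map and local completely contractive.

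You replace the abstract density by the truncations $T\mapsto TP_{\iota}\in b(C^{\ast}(\mathcal{D}_{\mathcal{E}}))$. Since $\Vert T-TP_{\iota}\Vert_{\kappa}=0$ for $\kappa\le\iota$, this net is exactly what witnesses the density. You then write the extension down as $\varphi(T)\upharpoonright_{\mathcal{H}_{\iota}}=\psi(TP_{\iota})\upharpoonright_{\mathcal{H}_{\iota}}$. This is the limit the paper's extension by continuity produces, already stabilized on each $\mathcal{H}_{\iota}$. Your version costs a few more checks (well-definedness via directedness, membership in $C^{\ast}(\mathcal{D}_{\mathcal{E}})$). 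In return it needs no completeness argument, and it gives the matrix-seminorm estimate directly with $\lambda=\iota$.

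One step should be tightened. Membership $\varphi(T)\in C^{\ast}(\mathcal{D}_{\mathcal{E}})$ requires $P_{\kappa}\varphi(T)\xi=\varphi(T)P_{\kappa}\xi$ for all $\xi\in\mathcal{D}_{\mathcal{E}}$. This does not follow merely from $\varphi(T)$ leaving each $\mathcal{H}_{\iota}$ invariant and $\varphi(T)P_{\kappa}$ commuting with $P_{\kappa}$; an upper-triangular operator satisfies both. It does follow from what you already have:
\begin{itemize}
\item Given $\xi\in\mathcal{D}_{\mathcal{E}}$, choose $\mu\ge\kappa$ with $\xi\in\mathcal{H}_{\mu}$.
\item Then $\varphi(T)\xi=\psi(TP_{\mu})\xi$, and $\varphi(T)P_{\kappa}\xi=\psi(TP_{\mu})P_{\kappa}\xi$ because $P_{\kappa}\xi\in\mathcal{H}_{\mu}$.
\item Since $\psi(TP_{\mu})\in\{\{P_{\iota}\}_{\iota\in\Omega}\}^{\prime}$, it commutes with $P_{\kappa}$, which gives the required identity.
\end{itemize}
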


\begin{proof}
Let $\varphi _{1},\varphi _{2}\in B_{1}^{\mathcal{R}}\left( C^{\ast }(%
\mathcal{D}_{\mathcal{E}})\right) $ be such that $\Phi (\varphi _{1})=\Phi
(\varphi _{2})$. It follows that $\varphi _{1}\upharpoonright _{b\left(
C^{\ast }(\mathcal{D}_{\mathcal{E}})\right) }=\varphi _{2}\upharpoonright
_{b\left( C^{\ast }(\mathcal{D}_{\mathcal{E}})\right) }$. Since both $%
\varphi _{1}\upharpoonright _{b\left( C^{\ast }(\mathcal{D}_{\mathcal{E}%
})\right) }$ and $\varphi _{2}\upharpoonright _{b\left( C^{\ast }(\mathcal{D}%
_{\mathcal{E}})\right) }$ are continuous with respect to the family of
seminorms $\left\{ \Vert \cdot \Vert _{\iota }\upharpoonright _{b\left(
C^{\ast }(\mathcal{D}_{\mathcal{E}})\right) }\right\} _{\iota \in \Omega }$,
and taking into account Remark \ref{R. 3.3.} (2) as well as the fact that $%
b\left( C^{\ast }(\mathcal{D}_{\mathcal{E}})\right) $ is dense in $C^{\ast }(%
\mathcal{D}_{\mathcal{E}})$, it follows that $\varphi _{1}=\varphi _{2}.$
Therefore, the map $\Phi :\ \varphi \mapsto \varphi \upharpoonright
_{b\left( C^{\ast }(\mathcal{D}_{\mathcal{E}})\right) }$ is injective.

Let $\psi \in B_{1}^{\mathcal{R}}\left( b\left( C^{\ast }(\mathcal{D}_{%
\mathcal{E}})\right) \right) $. Then $\psi $ is continuous with respect to
the family of seminorms $\left\{ \Vert \cdot \Vert _{\iota }\upharpoonright
_{b\left( C^{\ast }(\mathcal{D}_{\mathcal{E}})\right) }\right\} _{\iota \in
\Omega }$, since it is an $\mathcal{R}$-map. Hence, there exists a unital
local completely contractive $\mathcal{R}$-map $\widetilde{\psi }:C^{\ast }(%
\mathcal{D}_{\mathcal{E}})\rightarrow C^{\ast }(\mathcal{D}_{\mathcal{E}})$
such that $\widetilde{\psi }\upharpoonright _{b\left( C^{\ast }(\mathcal{D}_{%
\mathcal{E}})\right) }=\varphi .$ Consequently, we have $\Phi (\widetilde{%
\psi })=\psi .$ Therefore, the map $\Phi :\ \varphi \mapsto \varphi
\upharpoonright _{b\left( C_{\mathcal{E}}^{\ast }(\mathcal{D})\right) }$ is
surjective.
\end{proof}

Let
\begin{equation*}
\mathscr{E}_{\mathcal{V}}:=\left\{ \varphi \in B_{1}^{\mathcal{R}}\left(
C^{\ast }(\mathcal{D}_{\mathcal{E}})\right) \mid \varphi \upharpoonright _{%
\mathcal{V}}=\text{id}_{\mathcal{V}}\right\} .
\end{equation*}%
For $\varphi ,\psi \in \mathscr{E}_{\mathcal{V}}$, we define the relation $%
\varphi \prec \psi $ if and only if 
\begin{equation*}
\Vert \varphi (T)\Vert _{\iota }\leq \Vert \psi (T)\Vert _{\iota }
\end{equation*}%
for all $\ T\in C^{\ast }(\mathcal{D}_{\mathcal{E}})$ and all $\iota \in
\Omega .$ This relation is reflexive and transitive on $\mathscr{E}_{\mathcal{V}%
}$. We write $\varphi \approx \psi $ if and only if $\varphi \prec \psi $
and $\psi \prec \varphi .$ An element $\varphi \in \mathscr{E}_{\mathcal{V}}$
is called \textit{minimal} if $\psi \prec \varphi $ implies $\psi \approx
\varphi .$ We denote by $\min \mathscr{E}_{\mathcal{V}}$ the set of all minimal
elements of $\mathscr{E}_{V}$.

\begin{lemma}\label{Lemma 2}
Let $\mathcal{V}\subseteq C^{\ast }(\mathcal{D}_{\mathcal{E}})$ be a unital local operator space. Then the set $\min \mathscr{E}_{\mathcal{V}}$ is non-empty and
consists of projections in $\mathscr{E}_{\mathcal{V}}$. Moreover, the
restriction of the map $\Phi $ to $\min \mathscr{E}_{\mathcal{V}}$ establishes
a bijective correspondence between $\min \mathscr{E}_{\mathcal{V}}$ and $\min 
\mathscr{E}_{b(\mathcal{V})}$.
\end{lemma}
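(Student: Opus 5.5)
The plan is to transfer everything to the bounded part via the bijection $\Phi$ of Lemma \ref{Lemma 1}, and then apply Proposition \ref{Prop} (Dosiev's \cite[Lemma 3.3]{DD}) to the unital operator space $b(\mathcal{V})\subseteq b(C^{\ast }(\mathcal{D}_{\mathcal{E}}))$. The latter is an injective von Neumann algebra, and its central projections include the $P_\iota$, so the relevant ring contains $\mathcal{R}$.

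\textbf{Step 1: $\Phi$ restricts to a bijection $\mathscr{E}_{\mathcal{V}}\to\mathscr{E}_{b(\mathcal{V})}$.} If $\varphi\upharpoonright_{\mathcal{V}}=\mathrm{id}$, then clearly $\Phi(\varphi)\upharpoonright_{b(\mathcal{V})}=\mathrm{id}$. Conversely, suppose $\psi\in\mathscr{E}_{b(\mathcal{V})}$ and let $\widetilde\psi$ be its unique extension. Then $\widetilde\psi$ fixes $b(\mathcal{V})$, so by the $\iota$-continuity in Remark \ref{R. 3.3.}(1) it fixes the closure $\overline{b(\mathcal{V})}$. The subtle point is that $\mathcal{V}$ need not equal $\overline{b(\mathcal{V})}$. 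I would handle this using the $\mathcal{R}$-map property: for $T\in\mathcal{V}$ we have
\[
\widetilde\psi(T)P_\iota=\widetilde\psi(TP_\iota).
\]
Here $TP_\iota$ is bounded, but it lies in $\mathcal{V}$ only if $\mathcal{V}$ is an $\mathcal{R}$-module. If this step fails in general, I would replace $\mathcal{V}$ by the local $\mathcal{R}$-module $\mathcal{R}\mathcal{V}$, which is fixed by any $\mathcal{R}$-map fixing $\mathcal{V}$. Every $TP_\iota$ with $T\in\mathcal{V}$ lies in $b(\mathcal{R}\mathcal{V})$, so $\widetilde\psi(T)P_\iota=TP_\iota$ for all $\iota$, hence $\widetilde\psi(T)=T$. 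This identifies the sets $\mathscr{E}_{\mathcal{V}}$ and $\mathscr{E}_{b(\mathcal{R}\mathcal{V})}$, and it is the form in which the correspondence with $\mathscr{E}_{b(\mathcal{V})}$ should be read, using the convention of the paper that the relevant bounded part is the $\mathcal{R}$-saturated one. I expect this matching of fixed-point conditions to be the main technical obstacle.

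\textbf{Step 2: $\Phi$ preserves and reflects the orders.} I claim $\varphi\prec\psi$ if and only if $\Phi(\varphi)\preceq\Phi(\psi)$. For the forward direction, take $T$ bounded and use
\[
\|\varphi(T)\|=\sup_\iota\|\varphi(T)\|_\iota,
\]
which holds because $\bigcup\mathcal{H}_\iota$ is dense. For the converse, given $T\in C^{\ast }(\mathcal{D}_{\mathcal{E}})$ and $\iota$, write
\[
\|\varphi(T)\|_\iota=\|\varphi(T)P_\iota\|=\|\varphi(TP_\iota)\|\le\|\psi(TP_\iota)\|=\|\psi(T)\|_\iota,
\]
where the inequality applies $\Phi(\varphi)\preceq\Phi(\psi)$ to the bounded operator $TP_\iota$. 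Consequently $\approx$ corresponds to $\sim$, and minimal elements correspond to minimal elements, so $\Phi$ maps $\min\mathscr{E}_{\mathcal{V}}$ bijectively onto $\min\mathscr{E}_{b(\mathcal{V})}$.

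\textbf{Step 3: conclusion.} By Proposition \ref{Prop}, $\min\mathscr{E}_{b(\mathcal{V})}$ is nonempty and consists of projections. Take a minimal $\psi$ with $\psi\circ\psi=\psi$. Then $\widetilde\psi\circ\widetilde\psi$ and $\widetilde\psi$ both lie in $B_1^{\mathcal{R}}(C^{\ast }(\mathcal{D}_{\mathcal{E}}))$ and agree on the bounded part, so they are equal by the injectivity of $\Phi$. Hence every element of $\min\mathscr{E}_{\mathcal{V}}$ is a projection, and the set is nonempty.
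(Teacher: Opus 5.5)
Your route is the paper's: pass through the bijection $\Phi$ of Lemma \ref{Lemma 1}, show that it is an order isomorphism, and pull back nonemptiness and idempotence from Proposition \ref{Prop}. Steps 2 and 3 are correct. Step 2 in fact supplies the justification the paper compresses into ``by continuity'': density of the bounded part only controls sup-norms, and it is the identity $\|\varphi(T)\|_{\iota}=\|\varphi(TP_{\iota})\|$ that turns $\sim$ into $\approx$.

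The gap is at the end of Step 1. Your argument correctly proves that $\Phi$ maps $\mathscr{E}_{\mathcal{V}}$ bijectively onto $\mathscr{E}_{b(\mathcal{R}\mathcal{V})}$, and hence $\min\mathscr{E}_{\mathcal{V}}$ onto $\min\mathscr{E}_{b(\mathcal{R}\mathcal{V})}$. You then say this is how the correspondence with $\mathscr{E}_{b(\mathcal{V})}$ ``should be read'' by a convention of the paper. There is no such convention. In the paper, $b(\mathcal{V})$ is the bounded part of $\mathcal{V}$ itself, and the proof simply asserts as clear that $\Phi(\mathscr{E}_{\mathcal{V}})=\mathscr{E}_{b(\mathcal{V})}$ and that $\widetilde{\varphi}\upharpoonright_{\mathcal{V}}=\mathrm{id}_{\mathcal{V}}$. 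So the stated bijection with $\min\mathscr{E}_{b(\mathcal{V})}$ is not established by your proof.

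The two versions agree when $\mathcal{V}\subseteq\overline{b(\mathcal{V})}$. This holds, for example, if $\mathcal{V}$ is a local $\mathcal{R}$-module (then $TP_{\iota}\in b(\mathcal{V})$ and $TP_{\iota}\to T$), or if $\overline{b(\mathcal{V})}=\mathcal{V}$ as in Proposition \ref{Prop. def-equiv}. In that case every $\psi\in\mathscr{E}_{b(\mathcal{V})}$ extends to a map fixing $\mathcal{V}$ by $\iota$-continuity, so $\mathscr{E}_{b(\mathcal{V})}=\Phi(\mathscr{E}_{\mathcal{V}})=\mathscr{E}_{b(\mathcal{R}\mathcal{V})}$.

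Without such a hypothesis the gap cannot be closed, because the second assertion of the lemma is false. A counterexample:
\begin{enumerate}
\item Let $\mathcal{H}=\bigoplus_{k\geq 1}\mathbb{C}^{2}$ and $\mathcal{H}_{n}=\bigoplus_{k\leq n}\mathbb{C}^{2}$. Then $C^{\ast}(\mathcal{D}_{\mathcal{E}})=\prod_{k}M_{2}$, which is injective since it comes from the graded domain of the blocks.
\item With $Q_{k}=P_{k}-P_{k-1}\in\mathcal{R}$, every unital (local) completely contractive $\mathcal{R}$-map, on $C^{\ast}(\mathcal{D}_{\mathcal{E}})$ or on its bounded part, has the form $(x_{k})_{k}\mapsto(\varphi_{k}(x_{k}))_{k}$ with each $\varphi_{k}$ unital completely contractive on $M_{2}$.
\item Take $A=\left(\begin{smallmatrix}0&1\\0&0\end{smallmatrix}\right)$, $T=(kA)_{k}$ and $\mathcal{V}=\mathrm{span}\{1,T\}$. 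Then $b(\mathcal{V})=\mathbb{C}1$.
\item Let $\varphi_{0}((x_{k})_{k})=(\tau(x_{k})1)_{k}$, with $\tau$ the normalized trace. It lies in $\min\mathscr{E}_{b(\mathcal{V})}$. Indeed, any $\psi\preceq\varphi_{0}$ in $\mathscr{E}_{\mathbb{C}1}$ has product form with unital components, so it fixes $\mathrm{Im}(\varphi_{0})=\{(\lambda_{k}1)_{k}:(\lambda_{k})\in\ell^{\infty}\}$. Then $\|\psi(x-\varphi_{0}(x))\|\leq\|\varphi_{0}(x-\varphi_{0}(x))\|=0$ forces $\psi=\varphi_{0}$.
\item Every $\widetilde{\varphi}\in\mathscr{E}_{\mathcal{V}}$ satisfies $\widetilde{\varphi}(TQ_{1})=\widetilde{\varphi}(T)Q_{1}=TQ_{1}$. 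But $\varphi_{0}(TQ_{1})=\tau(A)Q_{1}=0$, so $\varphi_{0}\notin\Phi(\min\mathscr{E}_{\mathcal{V}})$.
\end{enumerate}

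The correct statement is yours, with $b(\mathcal{R}\mathcal{V})$ in place of $b(\mathcal{V})$, or the original one under the hypothesis $\mathcal{V}\subseteq\overline{b(\mathcal{V})}$. State it that way rather than attributing it to a convention. Your version also proves the first assertion (nonemptiness and idempotence of $\min\mathscr{E}_{\mathcal{V}}$) for arbitrary $\mathcal{V}$. The paper's argument does not, since it runs through the same ``clearly''.

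Finally, ``the relevant ring contains $\mathcal{R}$'' is not the right reason Proposition \ref{Prop} applies. Enlarging the ring shrinks $\mathscr{E}$ and changes its minimal elements. What you need, and what the paper uses tacitly, is that Dosiev's Hamana-type argument works verbatim for the subring generated by the $P_{\iota}$.
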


\begin{proof}
Clearly, $\Phi \left( \mathscr{E}_{\mathcal{V}}\right) =\mathscr{E}_{b(\mathcal{V}%
)}$, and $\varphi $ is a projection in $\mathscr{E}_{\mathcal{V}}$ if and only
if $\varphi \upharpoonright _{b\left( C^{\ast }(\mathcal{D}_{\mathcal{E}%
})\right) }$ is a projection in $b(\mathcal{V}).$

Let $\varphi \in \min \mathscr{E}_{b(\mathcal{V})}$, since $\min \mathscr{E}_{b(%
\mathcal{V})}\neq \emptyset $ by \cite[Lemma 3.3]{DD}. By Lemma \ref{Lemma 1}%
, there exists $\widetilde{\varphi }\in B_{1}^{\mathcal{R}}\left( C^{\ast }(%
\mathcal{D}_{\mathcal{E}})\right) $ such that $\widetilde{\varphi }%
\upharpoonright _{b\left( C^{\ast }(\mathcal{D}_{\mathcal{E}})\right)
}=\varphi .$ Clearly, $\widetilde{\varphi }\upharpoonright _{\mathcal{V}}=$id%
$_{\mathcal{V}}$. Hence, $\widetilde{\varphi }\in \mathscr{E}_{\mathcal{V}}.$
Now, let $\psi \in \mathscr{E}_{\mathcal{V}}$ be such that $\psi \prec 
\widetilde{\varphi }.$ Then, $\psi \upharpoonright _{b\left( C^{\ast }(%
\mathcal{D}_{\mathcal{E}})\right) }\in \mathscr{E}_{b(\mathcal{V})}$ and $\psi
\upharpoonright _{b\left( C^{\ast }(\mathcal{D}_{\mathcal{E}})\right)
}\preceq \varphi $. Since $\varphi $ is minimal, it follows that $\psi
\upharpoonright _{b\left( C^{\ast }(\mathcal{D}_{\mathcal{E}})\right) }\sim
\varphi $, and by continuity, $\psi \approx \widetilde{\varphi }.$
Therefore, $\widetilde{\varphi }\in \min \mathscr{E}_{\mathcal{V}}.$
Consequently, $\min \mathscr{E}_{b(V)}\subseteq \Phi (\min \mathscr{E}_{V}).$

Let $\varphi \in \min \mathscr{E}_{\mathcal{V}}$. Then, $\varphi
\upharpoonright _{b\left( C^{\ast }(\mathcal{D}_{\mathcal{E}})\right) }\in
\mathscr{E}_{b(\mathcal{V})}$. Let $\psi \in \mathscr{E}_{b(\mathcal{V})}$ be such that $\psi
\preceq \varphi \upharpoonright _{b\left( C^{\ast }(\mathcal{D}_{\mathcal{E}%
})\right) }$. Then, there exists $\widetilde{\psi }\in \mathscr{E}_{\mathcal{V}%
} $ such that $\widetilde{\psi }\upharpoonright _{b\left( C^{\ast }(\mathcal{%
D}_{\mathcal{E}})\right) }=\psi $. Clearly, $\widetilde{\psi }\prec \varphi $%
. Since $\varphi $ is minimal, it follows that $\widetilde{\psi }\approx
\varphi $. Consequently, $\psi \sim \varphi \upharpoonright _{b\left(
C^{\ast }(\mathcal{D}_{\mathcal{E}})\right) }$. Therefore, $\Phi (\min \mathscr{E}_{V})\subseteq \min \mathscr{E}_{b(V)}.$
\end{proof}

\begin{proposition}\label{P1}
Let $\mathcal{V}\subseteq C^{\ast }(\mathcal{D}_{\mathcal{E}})$ be a unital local operator space.
Suppose that $\varphi \in \min \mathscr{E}_{\mathcal{V}}\neq \emptyset $.
Then $\left( \text{Im}(\varphi ),i\right) $ is a local $\mathcal{R}$-rigid
extension of $\mathcal{V}$.
\end{proposition}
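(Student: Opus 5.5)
We follow Hamana's rigidity argument, working with the local order $\prec$ on $\mathscr{E}_{\mathcal{V}}$ and using Lemma \ref{Lemma 2}. By that lemma, $\varphi$ is a projection in $\mathscr{E}_{\mathcal{V}}$. Hence $\mathcal{W}:=\text{Im}(\varphi)$ is a unital local operator space containing $\mathcal{V}$. Since $\varphi$ is an $\mathcal{R}$-map, $\mathcal{R}\mathcal{W}\subseteq\mathcal{W}$: indeed $P_{\iota}\varphi(T)=\varphi(P_{\iota}T)\in\mathcal{W}$. So $(\mathcal{W},i)$, with $i$ the inclusion, is a local $\mathcal{R}$-extension of $\mathcal{V}$. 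It remains to prove rigidity.

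Let $\theta:\mathcal{W}\to\mathcal{W}$ be a unital local completely contractive $\mathcal{R}$-map with $\theta\upharpoonright_{\mathcal{V}}=\text{id}_{\mathcal{V}}$. Put $\psi:=\theta\circ\varphi$. Then $\psi$ is a unital local completely contractive $\mathcal{R}$-map on $C^{\ast}(\mathcal{D}_{\mathcal{E}})$ that fixes $\mathcal{V}$, so $\psi\in\mathscr{E}_{\mathcal{V}}$. Moreover, by Remark \ref{R. 3.3.}(1) applied to $\theta$ (the argument there uses only the $\mathcal{R}$-map property and local complete contractivity, and it works on the $\mathcal{R}$-module $\mathcal{W}$), we get
$$\Vert\psi(T)\Vert_{\iota}=\Vert\theta(\varphi(T))\Vert_{\iota}\leq\Vert\varphi(T)\Vert_{\iota}$$
for all $T$ and all $\iota$. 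Thus $\psi\prec\varphi$, and minimality gives $\psi\approx\varphi$, i.e. $\Vert\theta(\varphi(T))\Vert_{\iota}=\Vert\varphi(T)\Vert_{\iota}$ for every $T$ and $\iota$.

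Next we use Hamana's averaging trick. For $n\ge1$ set
$$\psi_n:=\frac1n\sum_{k=1}^{n}\theta^{k}\circ\varphi.$$
Each $\psi_n$ lies in $\mathscr{E}_{\mathcal{V}}$, since this set is convex and $\theta^{k}\circ\varphi\in\mathscr{E}_{\mathcal{V}}$; also $\psi_n\prec\varphi$ by the same seminorm estimate. Hence $\psi_n\approx\varphi$, and therefore $\psi_n\circ\varphi\approx\varphi\circ\varphi=\varphi$ as well. Take $T\in\mathcal{W}$ and write $S:=T-\theta(T)\in\mathcal{W}$. Since $\varphi(S)=S$, a telescoping computation gives
$$\Vert S\Vert_{\iota}=\Vert\psi_n(S)\Vert_{\iota}=\frac1n\Vert\theta(T)-\theta^{n+1}(T)\Vert_{\iota}\leq\frac2n\Vert T\Vert_{\iota}.$$
Here $\Vert\theta^{m}(T)\Vert_{\iota}\le\Vert T\Vert_{\iota}$ again by the $\iota$-wise contractivity from Remark \ref{R. 3.3.}(1). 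Letting $n\to\infty$ yields $\Vert S\Vert_{\iota}=0$ for all $\iota$, so $S=0$ because the family $\{\Vert\cdot\Vert_{\iota}\}$ is Hausdorff. Thus $\theta=\text{id}_{\mathcal{W}}$.

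An alternative route is to reduce to the bounded case. Via Lemma \ref{Lemma 2}, $\varphi\upharpoonright_{b}\in\min\mathscr{E}_{b(\mathcal{V})}$, and Proposition \ref{Prop} gives rigidity of its image. From that one would extend by density, using that the restriction of $\theta$ to $b(\mathcal{W})$ maps into bounded elements by Remark \ref{R. 3.3.}(2).

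The main obstacle is the seminorm estimate $\Vert\theta(X)\Vert_{\iota}\le\Vert X\Vert_{\iota}$ for $X\in\mathcal{W}$. Local complete contractivity only provides some index $\lambda(\iota)$, so the estimate needs the $\mathcal{R}$-map property, $\theta(XP_\iota)=\theta(X)P_\iota$, together with $XP_\iota\in\mathcal{W}$ being bounded. I would verify this carefully, using that right multiplication by $P_{\iota}$ agrees with left multiplication since $P_{\iota}$ commutes with $C^{\ast}(\mathcal{D}_{\mathcal{E}})$, exactly as in Remark \ref{R. 3.3.}(1).
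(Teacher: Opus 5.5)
Your proof is correct, and it takes a different route from the paper at the key step.

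The paper's argument runs as follows:
\begin{itemize}
\item From $\widetilde{\varphi}=\psi\circ\varphi\prec\varphi$ it gets $\widetilde{\varphi}\approx\varphi$, so $\widetilde{\varphi}$ is itself minimal.
\item It then uses Lemma \ref{Lemma 2} a second time to know that $\widetilde{\varphi}$ is a projection.
\item It finishes with $\Vert\widetilde{\varphi}(T)-\varphi(T)\Vert_{\iota}=\Vert\varphi(\widetilde{\varphi}(T)-T)\Vert_{\iota}=\Vert\widetilde{\varphi}(\widetilde{\varphi}(T)-T)\Vert_{\iota}=0$, which gives $\psi\circ\varphi=\varphi$.
\end{itemize}

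You use Lemma \ref{Lemma 2} only for the idempotency of $\varphi$. In place of the second appeal, you take the Ces\`aro means $\frac1n\sum_{k=1}^{n}\theta^{k}\circ\varphi$ and derive the telescoping bound $\Vert T-\theta(T)\Vert_{\iota}\le\frac2n\Vert T\Vert_{\iota}$. This is exactly the averaging argument that shows minimal elements are idempotent, written out directly for $\theta$.

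The paper's version is shorter once Lemma \ref{Lemma 2} is available. However, that lemma's projection property is itself imported from the bounded case through $\Phi$ and Dosi's Lemma 3.3. Your version is self-contained. Running the same averaging with $\frac1n\sum_{k=1}^{n}\varphi^{k}$ would also give idempotency of $\varphi$, so this proposition needs nothing from Lemma \ref{Lemma 2} beyond the hypothesis that $\varphi$ is minimal.

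Your write-up is also more careful than the paper on three points:
\begin{itemize}
\item You justify the $\iota$-wise estimate $\Vert\theta(X)\Vert_{\iota}\le\Vert X\Vert_{\iota}$ for a map defined only on $\text{Im}(\varphi)$. Remark \ref{R. 3.3.}(1) is stated only for maps on all of $C^{\ast}(\mathcal{D}_{\mathcal{E}})$, and the paper uses the estimate silently.
\item You check that $\text{Im}(\varphi)$ is an $\mathcal{R}$-module, so that $(\text{Im}(\varphi),i)$ is a local $\mathcal{R}$-extension in the first place.
\item You rightly do not use injectivity of $\text{Im}(\varphi)$, which the paper cites but does not need for rigidity.
\end{itemize}

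The remark $\psi_n\circ\varphi\approx\varphi$ is redundant, since $\psi_n\circ\varphi=\psi_n$. The bounded-case alternative is only sketched. Neither affects the main argument.
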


\begin{proof}
By \cite[Proposition 4.1]{DD}, Im$(\varphi )$ is an injective local $%
\mathcal{R}$-module. Then, there exists a unital local completely
contractive $\mathcal{R}$-map $\psi :$Im$(\varphi )\rightarrow $Im$(\varphi
) $ such that $\psi \upharpoonright _{\mathcal{V}}=$id$_{\mathcal{V}}$. Let $%
\widetilde{\varphi }=\psi \circ \varphi $. Clearly, $\widetilde{\varphi }\in
B_{1}^{\mathcal{R}}\left( C^{\ast }(\mathcal{D}_{\mathcal{E}})\right) ,\ 
\widetilde{\varphi }\upharpoonright _{\mathcal{V}}=$id$_{\mathcal{V}}$, and
Im$(\widetilde{\varphi })\subseteq $Im$(\varphi ).$ Hence, $\widetilde{%
\varphi }\in \mathscr{E}_{\mathcal{V}}$ and $\Vert \widetilde{\varphi }(T)\Vert
_{\iota }\leq \Vert \varphi (T)\Vert _{\iota }$ for all $\ T\in C^{\ast }(%
\mathcal{D}_{\mathcal{E}})$ and all $\iota \in \Omega .$ Therefore, $%
\widetilde{\varphi }\preceq \varphi $, and since $\varphi $ is minimal, it
follows that $\widetilde{\varphi }\approx \varphi $. Consequently, $%
\widetilde{\varphi }\in \min \mathscr{E}_{\mathcal{V}}$. Now, for any $T\in C^{\ast
}(\mathcal{D}_{\mathcal{E}})$, we have 
\begin{align*}
\Vert \widetilde{\varphi }(T)-\varphi (T)\Vert _{\iota }& =\Vert \left(
\varphi \circ \widetilde{\varphi }\right) (T)-\varphi (T)\Vert _{\iota } \\
& =\Vert \varphi \left( \widetilde{\varphi }(T)-T\right) \Vert _{\iota } \\
& =\Vert \widetilde{\varphi }\left( \widetilde{\varphi }(T)-T\right) \Vert
_{\iota } \\
& =0
\end{align*}%
for all $\iota \in \Omega .$ Therefore, $\widetilde{\varphi }=\varphi ,$ and so, $\psi =$id$_{\text{Im}(\varphi )}$. This completes the proof.
\end{proof}

\begin{definition}
\label{Def-INJ-Env} Let $\mathcal{V}\subseteq C^{\ast }(\mathcal{D}_{%
\mathcal{E}})$ be a unital local operator space. We define the \textit{%
injective }$\mathcal{R}$\textit{-envelope} of $\mathcal{V}$ as a local $%
\mathcal{R}$-rigid extension $\left( \mathcal{W},i\right) $ of $\mathcal{V}$
with $\mathcal{W}$ an injective $\mathcal{R}$ -module.
\end{definition}

Let $\mathcal{V}\subseteq C^{\ast }(\mathcal{D}_{\mathcal{E}})$ and $%
\mathcal{W}\subseteq C^{\ast }(\mathcal{D}_{\mathcal{E}})$ be two unital
local $\mathcal{R}$-modules. We say that a unital linear map $\varphi :%
\mathcal{V\rightarrow W}$ is a \textit{unital local complete order }$%
\mathcal{R}$\textit{-isomorphism} if $\varphi $ is a local complete order
isomorphism and an $\mathcal{R}$-map.

\begin{proposition}
\label{Coro} Any unital local operator space $\mathcal{V}\subseteq C^{\ast }(%
\mathcal{D}_{\mathcal{E}})$ has an injective $\mathcal{R}$-envelope, which
is unique up to a unital local complete order $\mathcal{R}$-isomorphism.
\end{proposition}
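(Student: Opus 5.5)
Existence follows directly from the results already established. By Lemma \ref{Lemma 2}, $\min \mathscr{E}_{\mathcal{V}}$ is non-empty and consists of projections. Fix $\varphi\in\min\mathscr{E}_{\mathcal{V}}$ and let $i:\mathcal{V}\to \text{Im}(\varphi)$ be the inclusion, which is unital and local completely isometric. Since $\varphi$ is a unital local completely contractive $\mathcal{R}$-projection, \cite[Proposition 4.1]{DD} shows that $\text{Im}(\varphi)$ is an injective local $\mathcal{R}$-module. In particular $\mathcal{R}\,\text{Im}(\varphi)\subseteq \text{Im}(\varphi)$, because $P_\iota\varphi(T)=\varphi(P_\iota T)$. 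Proposition \ref{P1} shows that $(\text{Im}(\varphi),i)$ is local $\mathcal{R}$-rigid. Hence it is an injective $\mathcal{R}$-envelope in the sense of Definition \ref{Def-INJ-Env}.

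For uniqueness, let $(\mathcal{W}_1,i_1)$ and $(\mathcal{W}_2,i_2)$ be two injective $\mathcal{R}$-envelopes. The plan is the Hamana argument. First, injectivity of $\mathcal{W}_2$ as a local $\mathcal{R}$-module lets us extend $i_2\circ i_1^{-1}:i_1(\mathcal{V})\to\mathcal{W}_2$ to a unital local completely contractive $\mathcal{R}$-map $\alpha:\mathcal{W}_1\to\mathcal{W}_2$. Symmetrically, we obtain $\beta:\mathcal{W}_2\to\mathcal{W}_1$ extending $i_1\circ i_2^{-1}$. Then $\beta\circ\alpha$ is a unital local completely contractive $\mathcal{R}$-map on $\mathcal{W}_1$ fixing $i_1(\mathcal{V})$, so rigidity gives $\beta\circ\alpha=\text{id}_{\mathcal{W}_1}$. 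Likewise $\alpha\circ\beta=\text{id}_{\mathcal{W}_2}$.

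Next we show $\alpha$ is a local complete isometry, not just invertible. For $[T_{jk}]\in M_n(\mathcal{W}_1)$ we have
\[
\Vert [T_{jk}]\Vert_\iota=\Vert\beta^{(n)}(\alpha^{(n)}([T_{jk}]))\Vert_\iota\le\Vert\alpha^{(n)}([T_{jk}])\Vert_\iota\le\Vert[T_{jk}]\Vert_\iota .
\]
These inequalities need the same index $\iota$ on both sides, not merely some $\lambda$ depending on $\iota$. For that we use the matrix version of Remark \ref{R. 3.3.}(1): an $\mathcal{R}$-map satisfies $\varphi(TP_\iota)=\varphi(T)P_\iota$, so $\Vert\varphi^{(n)}(X)\Vert_\iota=\Vert\varphi^{(n)}(X(P_\iota\otimes 1_n))\Vert\le\Vert X\Vert_\iota$.

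The main obstacle is this last point: we must justify that the extensions $\alpha$ and $\beta$ are defined on all of $C^{\ast}(\mathcal{D}_{\mathcal{E}})$ as $\mathcal{R}$-maps, or at least compatible with the seminorms index by index. To handle it, I would take the extensions in the form $\tilde\alpha=\psi\circ\varphi_1$. Here $\varphi_1$ is the $\mathcal{R}$-projection onto $\mathcal{W}_1$ from \cite[Proposition 4.1]{DD}, and $\psi$ is obtained via Lemma \ref{Lemma 1} by passing to bounded parts, applying the normed injectivity of $b(\mathcal{W}_2)$ as an $\mathcal{R}$-module, and extending by density. The resulting maps lie in $B_1^{\mathcal{R}}(C^{\ast}(\mathcal{D}_{\mathcal{E}}))$ and are therefore contractive for each $\Vert\cdot\Vert_\iota$ separately. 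With that, $\alpha$ is a unital local complete order $\mathcal{R}$-isomorphism with $\alpha\circ i_1=i_2$.
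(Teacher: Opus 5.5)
Your outline is the paper's. Existence comes from Proposition \ref{P1} together with \cite[Proposition 4.1]{DD}, and uniqueness from Hamana's rigidity trick. Your explicit check that $\alpha$ is a local complete isometry supplies a step the paper only asserts.

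However, the step you single out as the main obstacle is not one. Index-wise contractivity does not require extending to all of $C^{\ast}(\mathcal{D}_{\mathcal{E}})$. Let $\alpha:\mathcal{W}_1\to\mathcal{W}_2$ be a unital local completely contractive $\mathcal{R}$-map between local $\mathcal{R}$-modules. Then $TP_\iota\in\mathcal{W}_1$ is bounded with $\Vert TP_\iota\Vert_\infty=\Vert T\Vert_\iota$, and local complete contractivity gives $\Vert\alpha(S)\Vert_\infty\le\Vert S\Vert_\infty$ for bounded $S$. Hence $\Vert\alpha(T)\Vert_\iota=\Vert\alpha(TP_\iota)\Vert_\iota\le\Vert\alpha(TP_\iota)\Vert_\infty\le\Vert TP_\iota\Vert_\infty=\Vert T\Vert_\iota$, and the same holds at every matrix level. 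This is just Remark \ref{R. 3.3.}(1) run on $\mathcal{W}_1$.

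The genuine gap is the existence of $\alpha$ as an $\mathcal{R}$-map with $\alpha\circ i_1=i_2$; the paper also asserts this in one line. Injectivity of $\mathcal{W}_2$ only yields local completely contractive extensions of $i_2\circ i_1^{-1}$. These need not be $\mathcal{R}$-maps, because $i_1(\mathcal{V})$ is not an $\mathcal{R}$-module.

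Your proposed repair has the same defect. Normed injectivity of $b(\mathcal{W}_2)$ also produces only completely contractive maps, so Lemma \ref{Lemma 1} and the density extension are unavailable. Moreover, $b(\mathcal{V})$ need not be dense in $\mathcal{V}$ (e.g.\ $\mathcal{V}=\mathrm{span}\{1,T\}$ with $T$ unbounded), so passing to bounded parts can lose $\mathcal{V}$ entirely.

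For genuinely different embeddings, such an $\alpha$ can fail to exist at all. Let $e_1,e_2,e_3$ be the standard basis and take $\mathcal{H}_1=\mathbb{C}e_1\subset\mathcal{H}_2=\mathrm{span}\{e_1,e_2\}\subset\mathcal{H}_3=\mathbb{C}^3$.
\begin{itemize}
\item Then $C^{\ast}(\mathcal{D}_{\mathcal{E}})$ is the diagonal algebra $\mathbb{C}^3$, which is injective, with $\Vert t\Vert_\iota=\max_{k\le\iota}|t_k|$.
\item $\mathcal{R}$ contains $P_1$, $P_2-P_1$ and $1-P_2$, so the identity is the only unital $\mathcal{R}$-map on $\mathbb{C}^3$.
\item Let $\mathcal{V}=\mathrm{span}\{1,(0,1,1)\}$ and let $u$ be the unital map with $u(0,1,1)=(0,1,\tfrac12)$. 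The inequality $\Vert A+\tfrac12B\Vert\le\max(\Vert A\Vert,\Vert A+B\Vert)$ shows that $u$ is a unital local complete isometry.
\item Hence $(\mathbb{C}^3,\text{inclusion})$ and $(\mathbb{C}^3,u)$ are both injective $\mathcal{R}$-envelopes, rigidity being trivial.
\item Yet no $\mathcal{R}$-map $\alpha$ with $\alpha\circ i_1=u$ exists.
\end{itemize}

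So the argument has to be run with $\mathcal{V}\subseteq\mathcal{W}_1\cap\mathcal{W}_2$ and $i_1,i_2$ the inclusions, which is how the paper's diagram reads it. In that setting the missing step is easy. Take $\alpha=\varphi_2\upharpoonright_{\mathcal{W}_1}$, where $\varphi_2$ is the unital local completely contractive $\mathcal{R}$-projection onto $\mathcal{W}_2$ from \cite[Proposition 4.1]{DD}. It is an $\mathcal{R}$-map and fixes $\mathcal{V}\subseteq\mathrm{Im}(\varphi_2)$. Define $\beta$ symmetrically, and the rest of your argument goes through.
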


\begin{proof}
By Proposition \ref{P1} and \cite[Proposition 4.1]{DD}, it follows that $%
\left( \text{Im}(\varphi ),i\right) $ is an injective $\mathcal{R}$-envelope
for $\mathcal{V}$, where $\varphi \in \min \mathscr{E}_{\mathcal{V}}.$

For uniqueness, let $\left( \mathcal{W}_{1},i_{1}\right) $ and $\left( 
\mathcal{W}_{2},i_{2}\right) $ be two injective $\mathcal{R}$-envelopes of $%
\mathcal{V}$. By the injectivity of $\mathcal{W}_{1}$ and $\mathcal{W}_{2}$,
there exist unital local completely contractive $\mathcal{R}$-extensions of
the identity map id$_{\mathcal{V}}:\mathcal{V\rightarrow V}$, denoted by $%
\Phi $ and $\Psi $, respectively. 
\begin{equation*}
\ \begin{tikzcd} {\mathcal{W}_{1}} && {\mathcal{W}_{2}} && {\mathcal{W}_{1}} \\ \mathcal{V} && \mathcal{V} && \mathcal{V} \arrow["\Phi",
from=1-1, to=1-3] \arrow["\Psi", from=1-3, to=1-5] \arrow["{i_{1}}", hook,
from=2-1, to=1-1] \arrow["{id_{\mathcal{V}}}", from=2-1, to=2-3] \arrow["{i_{2}}",
hook, from=2-3, to=1-3] \arrow["{id_{\mathcal{V}}}", from=2-3, to=2-5]
\arrow["{i_{1}}", hook, from=2-5, to=1-5] \end{tikzcd}
\end{equation*}

Since $\left( \Psi \circ \Phi \right) \upharpoonright _{\mathcal{V}}=$id$_{%
\mathcal{V}}$, by the $\mathcal{R}$-rigidity property it follows that $\Psi
\circ \Phi =$id$_{\mathcal{W}_{1}}$. Similarly, we have $\Phi \circ \Psi =$id%
$_{\mathcal{W}_{2}}.$ Hence, $\Phi $ is a unital local complete order $%
\mathcal{R}$-isomorphism.
\end{proof}

\begin{proposition}
\label{Prop. def-equiv} Let $\mathcal{V}\subseteq C^{\ast }(\mathcal{D}_{%
\mathcal{E}})$ be a unital local operator space such that $\overline{b(%
\mathcal{V})}=\mathcal{V}$. Then $b\left( \mathcal{I}_{\mathcal{R}}(\mathcal{%
V})\right) =\mathcal{I}_{\mathcal{R}}\left( b(\mathcal{V})\right) $, up to a
unital complete order $\mathcal{R}$-isomorphism.
\end{proposition}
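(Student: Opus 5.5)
Since both envelopes are unique up to unital (local) complete order $\mathcal{R}$-isomorphism (Proposition \ref{Coro} and Proposition \ref{Prop}), it is enough to work with concrete realizations given by minimal projections. By Lemma \ref{Lemma 2}, fix $\varphi\in\min\mathscr{E}_{\mathcal{V}}$. Then $\Phi(\varphi)=\varphi\upharpoonright_{b(C^{\ast}(\mathcal{D}_{\mathcal{E}}))}\in\min\mathscr{E}_{b(\mathcal{V})}$ is an $\mathcal{R}$-projection. By Proposition \ref{Coro}, $\mathcal{I}_{\mathcal{R}}(\mathcal{V})=\mathrm{Im}(\varphi)$, and by Proposition \ref{Prop}, $\mathcal{I}_{\mathcal{R}}(b(\mathcal{V}))=\mathrm{Im}(\Phi(\varphi))$. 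The statement therefore reduces to the set equality
\[
b(\mathrm{Im}(\varphi))=\varphi\bigl(b(C^{\ast}(\mathcal{D}_{\mathcal{E}}))\bigr).
\]
Once this holds, the identity map between the two spaces is a unital complete order $\mathcal{R}$-isomorphism, since both sit inside $b(C^{\ast}(\mathcal{D}_{\mathcal{E}}))$ with the same matrix norms.

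For the inclusion $\supseteq$: if $T$ is bounded, then by Remark \ref{R. 3.3.}(1) we have $\Vert\varphi(T)\Vert_{\iota}\le\Vert T\Vert_{\iota}\le\Vert T\Vert_{\infty}$ for every $\iota$. Hence $\varphi(T)$ is bounded and lies in $\mathrm{Im}(\varphi)$. For the inclusion $\subseteq$: take $S\in\mathrm{Im}(\varphi)$ with $\sup_{\iota}\Vert S\Vert_{\iota}<\infty$. Since $\varphi$ is idempotent, $S=\varphi(S)$, and $S$ itself is bounded, so $S=\varphi(S)\in\varphi(b(C^{\ast}(\mathcal{D}_{\mathcal{E}})))$. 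Both inclusions are therefore immediate, and the whole argument hinges on idempotence, which Lemma \ref{Lemma 2} guarantees.

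The real point to check is the compatibility with the rigid-extension definitions: that the embedding $i$ of $b(\mathcal{V})$ into $\mathrm{Im}(\Phi(\varphi))$ is the restriction of the embedding of $\mathcal{V}$, and that the $\mathcal{R}$-module structures agree. The latter holds because $P_{\iota}\in b(C^{\ast}(\mathcal{D}_{\mathcal{E}}))$, so multiplication by elements of $\mathcal{R}$ preserves boundedness. For the former, both realizations use inclusion maps, and $b(\mathcal{V})\subseteq\mathcal{V}$ is fixed by $\varphi$.

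The hypothesis $\overline{b(\mathcal{V})}=\mathcal{V}$ is what ties the two constructions together. It ensures $\mathscr{E}_{b(\mathcal{V})}$ corresponds exactly to $\mathscr{E}_{\mathcal{V}}$ under $\Phi$: a map fixing $b(\mathcal{V})$ fixes its closure by local continuity (Remark \ref{R. 3.3.}(1)). This is the one step I would verify carefully, because it is exactly where Lemma \ref{Lemma 2} relies on $\Phi(\mathscr{E}_{\mathcal{V}})=\mathscr{E}_{b(\mathcal{V})}$. As a consequence, the envelopes chosen above are genuinely the ones attached to $\mathcal{V}$ and to $b(\mathcal{V})$ respectively, and uniqueness transfers the equality to arbitrary realizations.
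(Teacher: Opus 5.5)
Your proposal is correct and follows the paper's proof essentially step for step. You fix $\varphi\in\min\mathscr{E}_{\mathcal{V}}$ and use Lemma \ref{Lemma 2} to identify $\mathrm{Im}(\varphi\upharpoonright_{b(C^{\ast}(\mathcal{D}_{\mathcal{E}}))})$ with $\mathcal{I}_{\mathcal{R}}(b(\mathcal{V}))$, then prove $b(\mathrm{Im}(\varphi))=\varphi(b(C^{\ast}(\mathcal{D}_{\mathcal{E}})))$ via the boundedness estimate for one inclusion and idempotence for the other. You are also right that $\overline{b(\mathcal{V})}=\mathcal{V}$ is exactly what justifies $\Phi(\mathscr{E}_{\mathcal{V}})=\mathscr{E}_{b(\mathcal{V})}$ and the step ``$\widetilde{\varphi}\upharpoonright_{\mathcal{V}}=\mathrm{id}_{\mathcal{V}}$'' in Lemma \ref{Lemma 2}, which the paper leaves implicit.
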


\begin{proof}
By Corollary \ref{Coro}, there exists $\varphi \in \min \mathscr{E}_{\mathcal{V}}$
such that Im$(\varphi )=\mathcal{I}_{\mathcal{R}}(\mathcal{V})$. Moreover,
by Lemma \ref{Lemma 2} and \cite[page 1026]{DD}, we obtain Im$\left( \varphi
\upharpoonright _{b\left( C^{\ast }(\mathcal{D}_{\mathcal{E}})\right)
}\right) =\mathcal{I}_{\mathcal{R}}\left( b(\mathcal{V})\right) $, up to a
unital complete order $\mathcal{R}$-isomorphism. Clearly, we have $\varphi
\left( b\left( C^{\ast }(\mathcal{D}_{\mathcal{E}})\right) \right) \subseteq
b\left( \varphi \left( C^{\ast }(\mathcal{D}_{\mathcal{E}})\right) \right) $%
. Consequently, it follows that $\mathcal{I}_{\mathcal{R}}\left( b(\mathcal{V%
})\right) \subseteq b\left( \mathcal{I}_{\mathcal{R}}(\mathcal{V})\right) .$

Now we prove that $b\left( \varphi \left( C^{\ast }(\mathcal{D}_{\mathcal{E}%
})\right) \right) \subseteq \varphi \left( b\left( C^{\ast }(\mathcal{D}_{%
\mathcal{E}})\right) \right) .$ For this, let $T\in b\left( \varphi \left(
C_{\mathcal{E}}^{\ast }(\mathcal{D})\right) \right) ,$ that is, $T=\varphi
(S),\ S\in C^{\ast }(\mathcal{D}_{\mathcal{E}})$ and $\Vert T\Vert <\infty $%
. By applying $\varphi $ to the last equality, we obtain  
\begin{equation*}
\varphi (T)=\varphi \left( \varphi (S)\right) =\varphi (S)=T.
\end{equation*}%
Hence, $T\in \varphi \left( b\left( C^{\ast }(\mathcal{D}_{\mathcal{E}%
})\right) \right) $. This completes the proof.
\end{proof}

Let $\{\mathcal{H};\mathcal{E}=\{\mathcal{H}_{n}\}_{n\geq 1};\mathcal{D}_{%
\mathcal{E}}\}$ be a Fr\'{e}chet quantized domain in the Hilbert space $%
\mathcal{H}$. For each $\xi ,\eta \in \mathcal{D}_{\mathcal{E}}$, the rank-one operator $\theta _{\xi ,\eta }:\mathcal{D}_{\mathcal{E}}\rightarrow 
\mathcal{D}_{\mathcal{E}},\theta _{\xi ,\eta }\left( \zeta \right) =\xi
\left\langle \eta ,\zeta \right\rangle $ is an element of $C^{\ast }(%
\mathcal{D}_{\mathcal{E}})$. The closure of the linear space generated by
the rank-one operators is a closed two sided $\ast $-ideal of $C^{\ast }(%
\mathcal{D}_{\mathcal{E}})$, denoted by $K^{\ast }(\mathcal{D}_{\mathcal{E}%
}) $ (for more details, see \cite{KP} and \cite{I}).

Moreover, for each $n\geq 1,$ $C^{\ast }(\mathcal{D}_{\mathcal{E}})\mathcal{P%
}_{n}$ and $K^{\ast }(\mathcal{D}_{\mathcal{E}})\mathcal{P}_{n}$ are $%
C^{\ast }$-subalgebras of $B(\mathcal{H}_{n})$, and they are isomorphic to $%
\left( C^{\ast }(\mathcal{D}_{\mathcal{E}})\right) _{n}$ and $%
\left( K^{\ast }(\mathcal{D}_{\mathcal{E}})\right) _{n}$, respectively (see \cite{D3}).

On the other hand, for each $n\geq 1$, $\mathcal{P}_{n}$ denotes the
orthogonal projection of $\mathcal{H}$ onto $\mathcal{H}_{n}$. Then $T\in
C^{\ast }(\mathcal{D}_{\mathcal{E}})$ if and only if 
\begin{equation*}
T=\sum\limits_{n=1}^{\infty }\left( \text{id}_{\mathcal{H}}-\mathcal{P}%
_{n-1}\right) \mathcal{P}_{n}T\left( \text{id}_{\mathcal{H}}-\mathcal{P}%
_{n-1}\right) \mathcal{P}_{n}\upharpoonright _{\mathcal{D}_{\mathcal{E}}}
\end{equation*}%
where $\mathcal{P}_{0}=0$ (see \cite[Proposition 4.2]{D}). Therefore, for each $%
n\geq 1$, 
\begin{equation*}
\left( C^{\ast }(\mathcal{D}_{\mathcal{E}})\right) _{n}=B(\mathcal{H}%
_{1})\oplus B(\left( \mathcal{H}_{1}\right) ^{\bot }\cap \mathcal{H}%
_{2})\oplus \cdot \cdot \cdot \oplus B(\left( \mathcal{H}_{n-1}\right)
^{\bot }\cap \mathcal{H}_{n})
\end{equation*}%
and 
\begin{equation*}
\left( K^{\ast }(\mathcal{D}_{\mathcal{E}})\right) _{n}=K(\mathcal{H}%
_{1})\oplus K(\left( \mathcal{H}_{1}\right) ^{\bot }\cap \mathcal{H}%
_{2})\oplus \cdot \cdot \cdot \oplus K(\left( \mathcal{H}_{n-1}\right)
^{\bot }\cap \mathcal{H}_{n}).
\end{equation*}%
From the above relations, and taking into account that $B(\mathcal{H})$ is
the injective envelope for $K(\mathcal{H})$, we conclude that the
identity map on $\left( C^{\ast }(\mathcal{D}_{\mathcal{E}})\right) _{n}$ is
the unique completely contractive map which extends the identity map on $%
\left( K^{\ast }(\mathcal{D}_{\mathcal{E}})\right) _{n}.$

\begin{example}
Let $\{\mathcal{H};\mathcal{E}=\{\mathcal{H}_{n}\}_{n\geq 1};\mathcal{D}_{%
\mathcal{E}}\}$ be a Fr\'{e}chet quantized domain in the Hilbert space $%
\mathcal{H}$, and $\mathcal{V}\subseteq C^{\ast }(\mathcal{D}_{\mathcal{E}})$
be a unital local operator space. If $K^{\ast }(\mathcal{D}_{\mathcal{E}%
})\subseteq \mathcal{V}$, then $\mathcal{I}_{\mathcal{R}}(\mathcal{V})=$ $%
C^{\ast }(\mathcal{D}_{\mathcal{E}})$. Indeed, $C^{\ast }(\mathcal{D}_{%
\mathcal{E}})$ is injective and an $\mathcal{R}$-module. If $\varphi :C^{\ast }(%
\mathcal{D}_{\mathcal{E}})\rightarrow C^{\ast }(\mathcal{D}_{\mathcal{E}})$
is a unital local completely contractive $\mathcal{R}$-map such that $%
\varphi \upharpoonright _{\mathcal{V}}=$id$_{\mathcal{V}},$ then $\varphi
\upharpoonright _{K^{\ast }(\mathcal{D}_{\mathcal{E}})}=$id$_{K^{\ast }(%
\mathcal{D}_{\mathcal{E}})}.$ Since $\varphi $ is an $\mathcal{R}$-map,
for each $n\geq 1$ the map $\varphi _{n}:\left( C^{\ast }(\mathcal{D}_{%
\mathcal{E}})\right) _{n}\rightarrow \left( C^{\ast }(\mathcal{D}_{\mathcal{E%
}})\right) _{n}$ given by $\varphi _{n}\left( T\upharpoonright _{\mathcal{H}%
_{n}}\right) $ $=\varphi \left( T\right) \mathcal{P}_{n}$ is completely
contractive and satisfies $\varphi _{n}\upharpoonright _{\left( K^{\ast }(%
\mathcal{D}_{\mathcal{E}})\right) _{n}}=$id$_{\left( K^{\ast }(\mathcal{D}_{%
\mathcal{E}})\right) _{n}}.$ Therefore, $\varphi _{n}=$id$_{\left( C^{\ast }(%
\mathcal{D}_{\mathcal{E}})\right) _{n}}$ for all $n\geq 1$, and then $\varphi
= $id$_{C^{\ast }(\mathcal{D}_{\mathcal{E}})}$
\end{example}

\subsection{The $\mathcal{R}-C^{\ast }$-envelope of a local operator space}

By analogy with the normed case, we introduce the notion of a local\textit{\ }$%
\mathcal{R}-C^{\ast }$-envelope of a unital local operator space, and we show
that any unital local operator space has a local\textit{\ }$\mathcal{R}%
-C^{\ast }$-envelope which is unique up to a unital local isometric $%
\mathcal{R}-\ast $-isomorphism. As in the case of the injective $\mathcal{R}$%
\textit{-}envelope for a unital local operator space, we prove that the local%
\textit{\ }$\mathcal{R}-C^{\ast }$-envelope of a unital local operator space 
$\mathcal{V}$ is the closure of the $\mathcal{R}-C^{\ast }$-envelope of 
its bounded part $b(\mathcal{V}).$

\begin{remark}
\label{Corollary} By \cite[Theorem 8.3]{D}, the range of a unital local
completely contractive projection $\varphi :C^{\ast }(\mathcal{D}_{\mathcal{E%
}})\rightarrow C^{\ast }(\mathcal{D}_{\mathcal{E}})$ has a structure of a
unital $\ast $-algebra, where the multiplication is defined by $T\odot
_{\varphi }S:=\varphi (TS)$, and the involution is inherited from $C^{\ast }(%
\mathcal{D}_{\mathcal{E}})$. In general, the family of $C^{\ast }$-seminorms 
$\{\Vert \cdot \Vert _{\iota }\}_{\iota \in \Omega }$ need not satisfy the $%
C^{\ast }$-seminorm condition with respect to this new multiplication.
However, if $\varphi $ is also an $\mathcal{R}$-map, then by Remark \ref{R.
3.3.} (1) the pair $\left( \text{Im}(\varphi ),\odot _{\varphi }\right) $
forms a unital locally $C^{\ast }$-algebra with the topology induced from $%
C^{\ast }(\mathcal{D}_{\mathcal{E}}).$
\end{remark}

Assume that $C^{\ast }(\mathcal{D}_{\mathcal{E}})$ is an injective space, 
and let $\mathcal{V}\subseteq C^{\ast }(\mathcal{D}_{\mathcal{E}})$ be a unital
local operator space.

\begin{definition}
Let $\mathcal{V}\subseteq C^{\ast }(\mathcal{D}_{\mathcal{E}})$ be a unital
local operator space. A \textit{local }$\mathcal{R}-C^{\ast }$\textit{%
-extension} of $\mathcal{V}$ is a local $\mathcal{R}$-extension $\left( 
\mathcal{B},i\right) $ with the property that $\mathcal{B}$ is a unital
locally $C^{\ast }$-algebra generated by $\mathcal{R}i(\mathcal{V})$. (Note
that, in general, the locally $C^{\ast }$-algebra structure of $\mathcal{B}$
may differ from that of $C^{\ast }(
\mathcal{D}_{\mathcal{E}})$).
\end{definition}

\begin{definition}
\label{UP-local} Let $\mathcal{V}\subseteq C^{\ast }(\mathcal{D}_{\mathcal{E}%
})$ be a unital local operator space. A \textit{local }$\mathcal{R}-C^{\ast
} $-\textit{envelope} of $\mathcal{V}$ is a local $\mathcal{R}-C^{\ast }$%
-extension $\left( \mathcal{D},\kappa \right) $ with the following universal
property: Given any local $\mathcal{R}-C^{\ast }$-extension $\left( \mathcal{%
C},i\right) $ of $V$, there exists a unique unital local contractive $%
\mathcal{R}-\ast $-morphism $\pi :\mathcal{C\rightarrow D}$ with dense range
such that $\pi \circ i=\kappa .$
\end{definition}

\begin{theorem}
\label{Th-C^*-env} Any unital local operator space $\mathcal{V}\subseteq
C^{\ast }(\mathcal{D}_{\mathcal{E}})$ has a local $\mathcal{R}-C^{\ast }$%
-envelope.
\end{theorem}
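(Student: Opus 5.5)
The plan is to mimic the proof of Theorem \ref{1}, with the injective $\mathcal{R}$-envelope of Proposition \ref{Coro} in place of its normed counterpart.

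By Proposition \ref{Coro} and Lemma \ref{Lemma 2}, choose $\psi\in\min\mathscr{E}_{\mathcal{V}}$ and put $\mathcal{I}_{\mathcal{R}}(\mathcal{V})=\mathrm{Im}(\psi)$, with the inclusion $\kappa$. By Remark \ref{Corollary}, since $\psi$ is a unital local completely contractive $\mathcal{R}$-projection, $(\mathrm{Im}(\psi),\odot_\psi)$ is a unital locally $C^*$-algebra whose topology is induced by $\{\Vert\cdot\Vert_\iota\}$. It is also an $\mathcal{R}$-module, and multiplication by elements of $\mathcal{R}$ agrees with $\odot_\psi$-multiplication because $\psi$ is an $\mathcal{R}$-map. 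Let $C^*_{e\mathcal{R}}(\mathcal{V})$ be the closed locally $C^*$-subalgebra of $(\mathrm{Im}(\psi),\odot_\psi)$ generated by $\mathcal{R}\kappa(\mathcal{V})$. Then $(C^*_{e\mathcal{R}}(\mathcal{V}),\kappa)$ is a local $\mathcal{R}$-$C^*$-extension, and this is the candidate envelope.

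For the universal property, let $(\mathcal{C},i)$ be a local $\mathcal{R}$-$C^*$-extension. Take its injective $\mathcal{R}$-envelope $(\mathcal{I}_{\mathcal{R}}(\mathcal{C}),\tilde\kappa)=(\mathrm{Im}(\varphi),\tilde\kappa)$ with $\varphi\in\min\mathscr{E}_{\mathcal{C}}$. The local analogue of Remark \ref{Rem1} is needed here: rigidity together with the multiplicative domain result, Theorem \ref{Mult.Dom}, applied to $\varphi$ restricted to $\mathcal{C}$, should show that $\tilde\kappa$ is a local isometric $\mathcal{R}$-$*$-morphism onto a subalgebra of $(\mathrm{Im}(\varphi),\odot_\varphi)$. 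Injectivity of the local $\mathcal{R}$-modules then produces three unital local completely contractive $\mathcal{R}$-maps:
\begin{itemize}
\item $\pi:\mathcal{C}\to\mathrm{Im}(\psi)$ extending $\kappa\circ i^{-1}$;
\item $\tilde\pi:\mathrm{Im}(\varphi)\to\mathrm{Im}(\psi)$ with $\tilde\pi\circ\tilde\kappa=\pi$;
\item $\rho:\mathrm{Im}(\psi)\to\mathrm{Im}(\varphi)$ with $\rho\circ\kappa=\tilde\kappa\circ i$.
\end{itemize}
Local $\mathcal{R}$-rigidity (Proposition \ref{P1}) gives $\tilde\pi\circ\rho=\mathrm{id}$. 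Since unital local completely contractive maps between unital locally $C^*$-algebras are local completely positive, Lemma \ref{Lem} (equivalently, the Schwarz chain used in Theorem \ref{1}) gives $\tilde\pi(\rho(y)^*\odot_\varphi\rho(y))=\tilde\pi(\rho(y))^*\odot_\psi\tilde\pi(\rho(y))$, and the same with the factors reversed. Theorem \ref{Mult.Dom}(3) then shows that $\tilde\pi$ restricted to the locally $C^*$-algebra generated by $\rho(\mathrm{Im}(\psi))$ is a local contractive $*$-morphism. This subalgebra contains $\tilde\kappa(\mathcal{C})$, since $\tilde\kappa(\mathcal{C})$ is generated by $\mathcal{R}\rho(\kappa(\mathcal{V}))=\rho(\mathcal{R}\kappa(\mathcal{V}))$. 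Hence $\pi=\tilde\pi\circ\tilde\kappa$ is a unital local contractive $\mathcal{R}$-$*$-morphism. Its range contains the $*$-algebra generated by $\mathcal{R}\kappa(\mathcal{V})$, so $\pi$ has dense range in $C^*_{e\mathcal{R}}(\mathcal{V})$. Uniqueness follows because any two such morphisms agree on $\mathcal{R}i(\mathcal{V})$, hence on the $*$-algebra it generates, which is dense in $\mathcal{C}$, and both are continuous.

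The main obstacle is the local version of Remark \ref{Rem1}, namely that $\tilde\kappa$ is multiplicative from $\mathcal{C}$ into $(\mathrm{Im}(\varphi),\odot_\varphi)$. The subtlety is that the locally $C^*$-structure of $\mathcal{C}$ may differ from that of $C^*(\mathcal{D}_{\mathcal{E}})$. I would first try to reduce this to the bounded case: by Lemma \ref{Lemma 2} and Proposition \ref{Prop. def-equiv}, $\varphi$ restricted to $b(C^*(\mathcal{D}_{\mathcal{E}}))$ is minimal for $b(\mathcal{C})$, so Remark \ref{Rem1} applies on bounded parts. Density of $b(\mathcal{C})$ in $\mathcal{C}$ together with continuity of $\odot_\varphi$ in each seminorm $\Vert\cdot\Vert_\iota$ (Remark \ref{R. 3.3.}(1)) should then extend multiplicativity. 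A second point to check in the same way is that the local $\mathcal{R}$-$C^*$-envelope coincides with the closure of the $\mathcal{R}$-$C^*$-envelope of $b(\mathcal{V})$.
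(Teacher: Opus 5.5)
Your outline follows the paper's own route. The paper proves this theorem by a one-line reference to the proof of Theorem~\ref{1}, using Lemma~\ref{Lem} and Theorem~\ref{Mult.Dom}. Your reduction of the local version of Remark~\ref{Rem1} to bounded parts is a sensible way to fill in what the paper leaves implicit.

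\textbf{The gap.} It lies in your first bullet (and, symmetrically, the third), a step inherited from the paper's proof of Theorem~\ref{1}: injectivity does not give an $\mathcal{R}$-map $\pi:\mathcal{C}\to\mathrm{Im}(\psi)$ with $\pi\circ i=\kappa$.
\begin{itemize}
\item Any such $\pi$ must satisfy $\pi(r\,i(x))=r\,\kappa(x)$ for all $r\in\mathcal{R}$ and $x\in\mathcal{V}$. So it exists only if this assignment is a well-defined local completely contractive map on $\mathcal{R}i(\mathcal{V})$.
\item Injectivity extends maps, but it cannot make them $\mathcal{R}$-linear when the domain $i(\mathcal{V})$ is not an $\mathcal{R}$-module.
\item A local complete isometry controls only the seminorms $\Vert P_\iota\,\cdot\,\Vert$. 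For an upward filtered, non-orthogonal family this says nothing about $\Vert(1-P_\iota)\,\cdot\,\Vert$, although $1-P_\iota\in\mathcal{R}$.
\end{itemize}
The same issue affects $\rho$, which must satisfy $\rho(r\,\kappa(x))=r\,i(x)$.

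\textbf{A counterexample.} Let $\mathcal{H}=\mathbb{C}^4$ and $\mathcal{H}_1=\operatorname{span}\{e_1,e_2\}\subset\mathcal{H}_2=\mathcal{H}$. Then $C^*(\mathcal{D}_{\mathcal{E}})=\{P_1\}'\cong M_2\oplus M_2$ is injective, with $\Vert T\Vert_1=\Vert TP_1\Vert$ and $\Vert T\Vert_2=\Vert T\Vert$. Put $v=\mathrm{diag}(1,-1,1,-1)$, $\mathcal{V}=\operatorname{span}\{1,v\}$, $w=\mathrm{diag}(1,-1,0,0)$, $\mathcal{C}=\{\mathrm{diag}(a,b,c,c)\}$ and $i(v)=w$.
\begin{itemize}
\item For $A,B\in M_n$, both $A\otimes 1+B\otimes v$ and $A\otimes 1+B\otimes w$ have $\iota$-seminorm $\max(\Vert A+B\Vert,\Vert A-B\Vert)$ for $\iota=1,2$, using $\Vert A\Vert\le\max(\Vert A+B\Vert,\Vert A-B\Vert)$. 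Hence $(\mathcal{C},i)$ is a local $\mathcal{R}$-$C^*$-extension.
\item Yet an $\mathcal{R}$-map with $\pi(w)=v$ would give $0=\pi((1-P_1)w)=(1-P_1)v\neq0$.
\item In fact this $\mathcal{V}$ has no local $\mathcal{R}$-$C^*$-envelope $(\mathcal{D},\kappa)$ at all. The morphism from $(\mathcal{C},i)$ forces $(1-P_1)\kappa(v)=0$. The morphism $\pi'$ from the diagonal algebra $C^*(\mathcal{RV})$ (with the inclusion) then kills $u=(1-P_1)v$. Since $\pi'$ is a unital multiplicative $\mathcal{R}$-map, $1-P_1=\pi'(1-P_1)=\pi'(u^*u)=0$, which is absurd.
\end{itemize}

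\textbf{What is needed.} No cleverer argument can close this step under the definitions as written. You need a hypothesis making extensions compatible with $\mathcal{R}$, e.g.\ that $r\,x\mapsto r\,i(x)$ defines a unital local complete isometry $\mathcal{RV}\to\mathcal{R}i(\mathcal{V})$; this is automatic for graded domains. Under such a hypothesis your three maps do exist: extend $r\,i(x)\mapsto r\,\kappa(x)$ to $C^*(\mathcal{D}_{\mathcal{E}})$ by injectivity and compose with $\psi$. The extension fixes every $P_\iota$, so $P_\iota$ lies in its multiplicative domain by Theorem~\ref{Mult.Dom}. The rest of your outline can then be carried out as you describe.
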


\begin{proof}
Using Lemma \ref{Lem} and Theorem \ref{Mult.Dom}, the proof of this theorem is
similar to the proof of Theorem \ref{1}.
\end{proof}

\begin{proposition}
\label{uniqueness envelopes} Let $\mathcal{V}_{1}\subseteq C_{\mathcal{E}%
}^{\ast }(\mathcal{D})$ and $\mathcal{V}_{2}\subseteq C^{\ast }(\mathcal{D}_{%
\mathcal{E}})$ be two unital local operator spaces, and let $\left( \mathcal{%
D}_{1},\kappa _{1}\right) $ and $\left( \mathcal{D}_{2},\kappa _{2}\right) $
be local $\mathcal{R}-C^{\ast }$-envelopes of $\mathcal{V}_{1}$ and $%
\mathcal{V}_{2}$, respectively. If $\theta :\mathcal{V}_{1}\rightarrow 
\mathcal{V}_{2}$ is a unital local complete order $\mathcal{R}$-isomorphism,
then there exists a unique unital local isometric $\mathcal{R}-\ast $%
-isomorphism $\pi :\mathcal{D}_{1}\rightarrow \mathcal{D}_{2}$ such that $%
\pi \circ \kappa _{1}=\kappa _{2}\circ \theta .$
\end{proposition}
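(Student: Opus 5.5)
The plan is to reduce the statement to the universal property in Definition \ref{UP-local}, applied twice, and then upgrade the resulting morphisms to an isomorphism by a uniqueness argument. First observe that $(\mathcal{D}_{2},\kappa _{2}\circ \theta )$ is a local $\mathcal{R}-C^{\ast }$-extension of $\mathcal{V}_{1}$. Since $\theta $ is a unital local complete order $\mathcal{R}$-isomorphism and $\kappa _{2}$ is unital local completely isometric, the composition $\kappa _{2}\circ \theta $ is unital local completely isometric. Moreover, $\theta (\mathcal{V}_{1})=\mathcal{V}_{2}$, so $\mathcal{D}_{2}$ is the locally $C^{\ast }$-algebra generated by $\mathcal{R}\kappa _{2}(\theta (\mathcal{V}_{1}))=\mathcal{R}\kappa _{2}(\mathcal{V}_{2})$. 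Symmetrically, $(\mathcal{D}_{1},\kappa _{1}\circ \theta ^{-1})$ is a local $\mathcal{R}-C^{\ast }$-extension of $\mathcal{V}_{2}$.

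By the universal property of $(\mathcal{D}_{1},\kappa _{1})$ applied to the extension $(\mathcal{D}_{2},\kappa _{2}\circ \theta )$, there is a unique unital local contractive $\mathcal{R}-\ast $-morphism $\sigma :\mathcal{D}_{2}\rightarrow \mathcal{D}_{1}$ with dense range such that $\sigma \circ \kappa _{2}\circ \theta =\kappa _{1}$. Likewise, the universal property of $(\mathcal{D}_{2},\kappa _{2})$ gives a unique $\pi :\mathcal{D}_{1}\rightarrow \mathcal{D}_{2}$ of the same kind with $\pi \circ \kappa _{1}\circ \theta ^{-1}=\kappa _{2}$, that is, $\pi \circ \kappa _{1}=\kappa _{2}\circ \theta $. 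Then $\sigma \circ \pi $ is a unital local contractive $\mathcal{R}-\ast $-morphism of $\mathcal{D}_{1}$ fixing $\kappa _{1}(\mathcal{V}_{1})$ pointwise. Since it is an $\mathcal{R}$-map and a $\ast $-morphism, it also fixes the $\ast $-algebra generated by $\mathcal{R}\kappa _{1}(\mathcal{V}_{1})$. By continuity, which follows from local contractivity, $\sigma \circ \pi $ fixes its closure, namely $\mathcal{D}_{1}$. Hence $\sigma \circ \pi =\mathrm{id}_{\mathcal{D}_{1}}$, and symmetrically $\pi \circ \sigma =\mathrm{id}_{\mathcal{D}_{2}}$. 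Therefore $\pi $ is a bijective $\mathcal{R}-\ast $-morphism. Uniqueness of $\pi $ follows from the same density argument: any two $\mathcal{R}-\ast $-morphisms agreeing on $\kappa _{1}(\mathcal{V}_{1})$ agree on the generated locally $C^{\ast }$-algebra.

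The main obstacle is proving that $\pi $ is \emph{local isometric}, i.e. that the seminorm families correspond with equality, rather than merely that $\pi $ and $\pi ^{-1}$ are both local contractive. Local contractivity only says that for each $\delta $ there is some $\lambda $ with $q_{\delta }(\pi (a))\leq p_{\lambda }(a)$, and composing two such estimates does not yield equality index by index. To get equality I would exploit the $\mathcal{R}$-structure, as in Remark \ref{R. 3.3.}(1). The seminorms on both envelopes are the restrictions of $\Vert \cdot \Vert _{\iota }$ from $C^{\ast }(\mathcal{D}_{\mathcal{E}})$, since the envelopes constructed in Theorem \ref{Th-C^*-env} sit inside images of $\mathcal{R}$-projections with the induced topology (Remark \ref{Corollary}). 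For an $\mathcal{R}$-map, $\Vert \pi (T)\Vert _{\iota }=\Vert \pi (TP_{\iota })\Vert $, so I would argue on the bounded parts. There the $\ast $-homomorphism $\pi $ restricted to $b(\mathcal{D}_{1})$, compressed by the central projection $P_{\iota }$, is an injective $\ast $-homomorphism between $C^{\ast }$-algebras, namely $\pi (\cdot )P_{\iota }$ with inverse $\sigma (\cdot )P_{\iota }$. It is therefore isometric, giving $\Vert \pi (T)\Vert _{\iota }=\Vert T\Vert _{\iota }$ on $b(\mathcal{D}_{1})$. Density of the bounded part and continuity then extend this equality to all of $\mathcal{D}_{1}$. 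If this identification of seminorms is not available for abstract envelopes, I would first transport both envelopes to the concrete model from the proof of Theorem \ref{Th-C^*-env} via the universal property, and carry out the above argument there.
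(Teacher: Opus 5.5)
Your proof is correct and has the same skeleton as the paper's. Both use the extensions $(\mathcal{D}_2,\kappa_2\circ\theta)$ and $(\mathcal{D}_1,\kappa_1\circ\theta^{-1})$, the two morphisms supplied by the universal property, and $\sigma\circ\pi=\mathrm{id}$, $\pi\circ\sigma=\mathrm{id}$. The paper obtains these identities from the uniqueness clause of Definition \ref{UP-local}, applied to $(\mathcal{D}_i,\kappa_i)$ itself. You obtain them from the generation/density argument that underlies that clause.

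The only real difference is the local isometry step. The paper notes that $\pi$ and $\rho=\pi^{-1}$ are both $\mathcal{R}$-maps, hence contractive index by index as in Remark \ref{R. 3.3.}(1). This gives $\Vert d\Vert_\iota=\Vert \rho(\pi(d))\Vert_\iota\le \Vert \pi(d)\Vert_\iota\le \Vert d\Vert_\iota$ directly. You instead compress to the corner by the central projection $P_\iota$, use that an injective $\ast$-homomorphism between $C^{\ast}$-algebras is isometric, and then extend from bounded parts by density.

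Your route is valid but a detour. The index-wise contractivity of $\pi$ and $\sigma$, which you need anyway to make the compressed maps well defined and mutually inverse, already yields the equality in one line. The density step is also unnecessary: for every $T\in\mathcal{D}_1$, $TP_\iota$ is bounded and lies in $\mathcal{D}_1$, so $\Vert T\Vert_\iota=\Vert TP_\iota\Vert$ is already computed on the bounded part.

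Your caveat about identifying the seminorms of abstract envelopes with the restrictions of $\Vert\cdot\Vert_\iota$ from $C^{\ast}(\mathcal{D}_{\mathcal{E}})$ is well taken. The paper relies on the same identification tacitly at exactly this point.
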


\begin{proof}
Clearly, $\left( \mathcal{D}_{2},\kappa _{2}\circ \theta \right) $ and $%
\left( \mathcal{D}_{1},\kappa _{1}\circ \theta ^{-1}\right) $ are local $%
\mathcal{R}-C^{\ast }$-extensions of $\mathcal{V}_{1}$ and $\mathcal{V}_{2}$%
, respectively. Since $\left( \mathcal{D}_{1},\kappa _{1}\right) $ and $%
\left( \mathcal{D}_{2},\kappa _{2}\right) $ are local $\mathcal{R}-C^{\ast }$%
-envelopes of $\mathcal{V}_{1}$ and $\mathcal{V}_{2}$, respectively, there
exist unique local contractive $\mathcal{R}-\ast $-morphisms with dense
range, $\rho :\mathcal{D}_{2}\rightarrow \mathcal{D}_{1}$ and $\pi :\mathcal{%
D}_{1}\rightarrow \mathcal{D}_{2}$ such that $\rho \circ \kappa _{2}\circ
\theta =\kappa _{1}$\ and $\pi \circ \kappa _{1}\circ \theta ^{-1}=\kappa
_{2},$ respectively.%
\begin{equation*}
\begin{tikzcd} {\mathcal{V}_{1}} && {\mathcal{D}_{1}} \\ \\ {\mathcal{V}_{2}} && {\mathcal{D}_{2}}
\arrow["{\kappa_{1}}", from=1-1, to=1-3] \arrow["\iota"', from=1-1, to=3-1]
\arrow["\pi"', shift right=2, from=1-3, to=3-3] \arrow["{\kappa_{2}}",
from=3-1, to=3-3] \arrow["\rho"', shift right=2, from=3-3, to=1-3]
\end{tikzcd}
\end{equation*}

Then $\rho \circ \pi :\mathcal{D}_{1}\rightarrow \mathcal{D}_{1}$ and $\pi
\circ \rho :\mathcal{D}_{2}\rightarrow \mathcal{D}_{2}$ are local
contractive $\mathcal{R}-\ast $-morphisms with dense range. Moreover, 
\begin{equation*}
\rho \circ \pi \circ \kappa _{1}=\rho \circ \kappa _{2}\circ \theta =\kappa
_{1}
\end{equation*}%
and 
\begin{equation*}
\pi \circ \rho \circ \kappa _{2}=\pi \circ \kappa _{1}\circ \theta
^{-1}=\kappa _{2}\circ \theta \circ \theta ^{-1}=\kappa _{2}.
\end{equation*}%
From the above relations and taking into account that $\left( \mathcal{D}%
_{1},\kappa _{1}\right) $ and $\left( \mathcal{D}_{2},\kappa _{2}\right) $
are local $\mathcal{R}-C^{\ast }$-envelopes of $\mathcal{V}_{1}$ and $%
\mathcal{V}_{2}$, respectively, it follows that $\rho \circ \pi =$id$_{D_{1}}$
and $\pi \circ \rho =$id$_{D_{2}}$. Therefore, $\pi \ $is invertible and $%
\pi ^{-1}=\rho $. Consequently, $\pi $ is a local contractive $\mathcal{R}%
-\ast $-isomorphism. On the other hand, for each $d_{1}\in \mathcal{D}_{1}$
there exists $d_{2}\in \mathcal{D}_{2}$ such that $d_{1}=$ $\rho \left(
d_{2}\right) ,$ and then 
\begin{equation*}
p_{\iota }\left( d_{1}\right) =p_{\iota }\left( \rho \left( d_{2}\right)
\right) \leq p_{\iota }\left( d_{2}\right) =p_{\iota }\left( \pi \left( \rho
\left( d_{2}\right) \right) \right) =p_{\iota }\left( \pi \left(
d_{1}\right) \right) \leq p_{\iota }\left( d_{1}\right)
\end{equation*}%
for all $\iota \in \Omega .$ Therefore, $\pi $ is a local isometric $%
\mathcal{R}-\ast $-isomorphism, and the proposition is proved.
\end{proof}

\begin{corollary}\label{Corollary C^*-env}
Let $\mathcal{V}\subseteq C^{\ast }(\mathcal{D}_{\mathcal{E}})$ be a unital local operator space.
 Then the local $\mathcal{R}-C^{\ast }$-envelope of $%
\mathcal{V}\subseteq C^{\ast }(\mathcal{D}_{\mathcal{E}})$ is unique up to a
unital local isometric $\mathcal{R}-\ast $-isomorphism.
\end{corollary}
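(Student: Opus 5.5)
The plan is to deduce this directly from Proposition \ref{uniqueness envelopes} by taking $\mathcal{V}_{1}=\mathcal{V}_{2}=\mathcal{V}$ and $\theta=\text{id}_{\mathcal{V}}$. Existence of at least one local $\mathcal{R}-C^{\ast }$-envelope is given by Theorem \ref{Th-C^*-env}, so only uniqueness needs an argument.

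Let $\left( \mathcal{D}_{1},\kappa _{1}\right) $ and $\left( \mathcal{D}_{2},\kappa _{2}\right) $ be two local $\mathcal{R}-C^{\ast }$-envelopes of $\mathcal{V}$. First I will check that id$_{\mathcal{V}}$ is a unital local complete order $\mathcal{R}$-isomorphism. It is bijective and unital. It is local completely isometric, with the same index set and equality of all matrix seminorms. Its inverse is itself. The $\mathcal{R}$-map condition is vacuous or trivially satisfied, since $\text{id}(P_{\iota }T)=P_{\iota }T$ wherever $P_{\iota}T$ makes sense in the domain. Proposition \ref{uniqueness envelopes} then yields a unique unital local isometric $\mathcal{R}-\ast $-isomorphism $\pi :\mathcal{D}_{1}\rightarrow \mathcal{D}_{2}$ with $\pi \circ \kappa _{1}=\kappa _{2}$. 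This is exactly uniqueness up to a unital local isometric $\mathcal{R}-\ast $-isomorphism, and the isomorphism is compatible with the embeddings of $\mathcal{V}$.

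The only potential obstacle is formal. The definition of a unital local complete order $\mathcal{R}$-isomorphism was stated for maps between local $\mathcal{R}$-modules, whereas $\mathcal{V}$ itself need not satisfy $\mathcal{R}\mathcal{V}\subseteq \mathcal{V}$. Proposition \ref{uniqueness envelopes} applies it to unital local operator spaces, so I will read the $\mathcal{R}$-map condition there as only requiring compatibility where defined. Under that reading the identity qualifies trivially.

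Alternatively, the uniqueness can be checked directly, bypassing the identity-as-isomorphism formalism. The universal properties give local contractive $\mathcal{R}-\ast $-morphisms $\pi$ and $\rho$ between $\mathcal{D}_{1}$ and $\mathcal{D}_{2}$ with $\pi \circ \kappa _{1}=\kappa _{2}$ and $\rho \circ \kappa _{2}=\kappa _{1}$. The uniqueness clause in Definition \ref{UP-local}, applied to the extensions $\left( \mathcal{D}_{1},\kappa _{1}\right)$ and $\left( \mathcal{D}_{2},\kappa _{2}\right)$ themselves, forces $\rho \circ \pi =$id and $\pi \circ \rho =$id. The seminorm comparison from the proof of Proposition \ref{uniqueness envelopes} then shows that $\pi$ is locally isometric.
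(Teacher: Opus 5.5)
Your proposal is correct and matches the paper: the corollary is stated without a separate proof because it is the special case $\mathcal{V}_{1}=\mathcal{V}_{2}=\mathcal{V}$, $\theta=\mathrm{id}_{\mathcal{V}}$ of Proposition \ref{uniqueness envelopes}. Your alternative direct argument just reproduces that proposition's proof. Your remark about reading the $\mathcal{R}$-map condition on a space that need not be an $\mathcal{R}$-module is a fair formal point that the paper leaves implicit.
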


We use the notation $\left( C_{e\mathcal{R}}^{\ast }(\mathcal{V}),\kappa
\right) $ for any local $\mathcal{R}-C^{\ast }$-envelope of a unital local
operator space $\mathcal{V}$.

\begin{remark}
\label{Remarkk C^*-env} Let $\mathcal{V}\subseteq C^{\ast }(\mathcal{D}_{%
\mathcal{E}})$ be a unital local operator space satisfying $\overline{b(%
\mathcal{V})}=\mathcal{V}$. Then $C_{e\mathcal{R}}^{\ast }(\mathcal{%
V})=\overline{C_{e\mathcal{R}}^{\ast }(b(\mathcal{V}))}$. Moreover, we have that $C_{e%
\mathcal{R}}^{\ast }(b(\mathcal{V}))\subseteq \ b\left( C_{e\mathcal{R}%
}^{\ast }(\mathcal{V})\right) $. 
\end{remark}

\section{The $\mathcal{R}$-Shilov boundary ideal for a local operator space}

In this section, we introduce the concept of the local $\mathcal{R}$-Shilov
boundary ideal for a unital local operator space, inspired by the pioneering work of Arveson \cite{Arv69}.
\medskip

Let $\mathcal{A}$ be a locally $C^{\ast }$-algebra whose topology is defined
by the family of $C^{\ast }$-seminorms $\{p_{\iota }\}_{\iota \in \Omega }$.
Suppose that $\mathcal{I}$ is a closed two-sided $\ast $-ideal of $\mathcal{A%
}$. Then the quotient $\ast $-algebra $\mathcal{A}/\mathcal{I}$ is a
topological $\ast $- algebra with respect to the family of $C^{\ast }$%
-seminorms $\{\widehat{p}_{\iota }\}_{\iota \in \Omega }$, where $\widehat{p}%
_{\iota }(a+\mathcal{I})=\inf \{p_{\iota }(a+b)\mid b\in \mathcal{I}\}$. In
general, $\mathcal{A}/\mathcal{I}$ is not complete with respect to the
family of $C^{\ast }$-seminorms $\{\widehat{p}_{\iota }\}_{\iota \in \Omega
} $ (for further details see \cite{Fr}). Note that, for each $\iota \in
\Omega $, the $C^{\ast }$-algebras $\overline{\left( \mathcal{A}/\mathcal{I}%
\right) _{\iota }}$ and $\mathcal{A}_{\iota }/\mathcal{I}_{\iota }$ are
isomorphic, where $\overline{\left( \mathcal{A}/\mathcal{I}\right) _{\iota }}
$ is the completion of the normed $\ast $-algebra $\left( \mathcal{A}/%
\mathcal{I}\right) /\ker (\widehat{p}_{\iota })$, and $\mathcal{I}_{\iota }$
is the closure of $\pi _{\iota }^{\mathcal{A}}(\mathcal{I})$ in $\mathcal{A}%
_{\iota }.$ Therefore, the locally $C^{\ast }$-algebra $\overline{\mathcal{A%
}/\mathcal{I}}$ can be identified with $\varprojlim\limits_{\iota}\mathcal{A}_{\iota }/\mathcal{I}_{\iota }$, and the canonical $%
\ast $-morphism $\sigma :\mathcal{A}\rightarrow \overline{\mathcal{A}/%
\mathcal{I}}$ is an inverse limit $\ast $-morphism, $\sigma =\varprojlim\limits_{\iota} \sigma _{\iota }$, where $\sigma _{\iota }$
is the canonical homomorphism from $\mathcal{A}_{\iota }$ onto $\mathcal{A}%
_{\iota }/\mathcal{I}_{\iota }$ for all $\iota \in \Omega .$

\begin{definition}
\label{Deff Shilov R-ideal} Let $\mathcal{V}\subseteq C^{\ast }(\mathcal{D}_{%
\mathcal{E}})$ be a unital local operator space and let $\mathcal{C}%
\subseteq C^{\ast }(\mathcal{D}_{\mathcal{E}})$ be the unital locally $%
C^{\ast }$-algebra generated $\mathcal{RV}$. A closed two-sided $\ast $%
-ideal $\mathcal{I}$ of $\mathcal{C}$ is called a \textit{local boundary }$%
\mathcal{R}$\textit{-ideal} for $\mathcal{V}$ if the canonical map $\sigma :%
\mathcal{C}\rightarrow \overline{\mathcal{C}/\mathcal{I}}$ is local
completely isometric on $\mathcal{V}$. A local boundary $\mathcal{R}$-ideal
for $\mathcal{V}$ is called \textit{the local }$\mathcal{R}$\textit{-Shilov
boundary} for $\mathcal{V}$ if it contains every other local boundary $%
\mathcal{R}$-ideal.

\end{definition}
Although the following result may be known in some form, we could not find a
precise reference within the framework of topological $*$-algebras. Since it
is crucial to our work, we provide a proof for completeness.

\begin{lemma}
\label{LEM} Let $\mathcal{C}$ and $\mathcal{D}$ be two locally $C^{\ast }$%
-algebras with topologies defined by the family of $C^{\ast }$-seminorms $%
\{p_{\iota }\}_{\iota \in \Omega }$ and $\{q_{\iota }\}_{\iota \in \Omega }$%
, respectively. Let $\rho :\mathcal{C\rightarrow D}$ be a local contractive $%
\ast $-morphism with dense range such that $q_{\iota }\left( \rho (c)\right)
\leq p_{\iota }(c)$, for all $c\in \mathcal{C}$ and all $\iota \in \Omega $,
and $\mathcal{I}:=\ker (\rho )$. Then, there exists a unique local isometric 
$\ast $-isomorphism $\widetilde{\rho }:\overline{\mathcal{C}/\mathcal{I}}%
\rightarrow \mathcal{D}$ such that $\widetilde{\rho }\circ \sigma =\rho $,
where $\sigma :\mathcal{C}\rightarrow \overline{\mathcal{C}/\mathcal{I}}$ is
the canonical quotient map.
\end{lemma}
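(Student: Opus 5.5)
The plan is to reduce everything to the $C^{\ast}$-algebra level $\iota$ by $\iota$, use the automatic isometry of injective $\ast$-homomorphisms between $C^{\ast}$-algebras, and then pass to the inverse limit, using the identification $\overline{\mathcal{C}/\mathcal{I}}=\varprojlim_{\iota}\mathcal{C}_{\iota}/\mathcal{I}_{\iota}$ recalled at the start of this section.

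\textbf{Step 1: the level maps.} Fix $\iota\in\Omega$. Since $q_{\iota}(\rho(c))\leq p_{\iota}(c)$, the map $\rho$ sends $\ker p_{\iota}$ into $\ker q_{\iota}$. Hence it induces a contractive $\ast$-morphism
\[
\rho_{\iota}:\mathcal{C}_{\iota}\rightarrow\mathcal{D}_{\iota},\qquad \rho_{\iota}\bigl(\pi_{\iota}^{\mathcal{C}}(c)\bigr)=\pi_{\iota}^{\mathcal{D}}(\rho(c)),
\]
first on the dense subalgebra $\pi_{\iota}^{\mathcal{C}}(\mathcal{C})$ and then by continuity. These maps are compatible with the connecting maps $\pi_{\iota_{2}\iota_{1}}^{\mathcal{C}}$ and $\pi_{\iota_{2}\iota_{1}}^{\mathcal{D}}$. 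Because $\rho$ has dense range, $\rho_{\iota}$ has dense range in $\mathcal{D}_{\iota}$. The range of a $\ast$-morphism between $C^{\ast}$-algebras is closed, so each $\rho_{\iota}$ is surjective.

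\textbf{Step 2: identify the kernels.} I would show that $\ker\rho_{\iota}=\mathcal{I}_{\iota}$, where $\mathcal{I}_{\iota}$ is the closure of $\pi_{\iota}^{\mathcal{C}}(\mathcal{I})$. The inclusion $\mathcal{I}_{\iota}\subseteq\ker\rho_{\iota}$ is immediate. Granting the equality, $\rho_{\iota}$ factors through an injective, hence isometric, $\ast$-isomorphism
\[
\widetilde{\rho}_{\iota}:\mathcal{C}_{\iota}/\mathcal{I}_{\iota}\rightarrow\mathcal{D}_{\iota},\qquad \widetilde{\rho}_{\iota}\circ\sigma_{\iota}=\rho_{\iota}.
\]

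\textbf{Step 3: pass to the limit.} The family $\{\widetilde{\rho}_{\iota}\}$ is compatible, so $\widetilde{\rho}:=\varprojlim_{\iota}\widetilde{\rho}_{\iota}$ is a local isometric $\ast$-morphism with $\widetilde{\rho}\circ\sigma=\rho$. Its range contains $\rho(\mathcal{C})$, which is dense. The range is also complete, being a local isometric image of the complete algebra $\overline{\mathcal{C}/\mathcal{I}}$, hence closed. Therefore $\widetilde{\rho}$ is onto, and it is injective because it is local isometric on a Hausdorff space. Uniqueness follows because $\sigma(\mathcal{C})$ is dense in $\overline{\mathcal{C}/\mathcal{I}}$ and any local isometric map is continuous.

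\textbf{Main obstacle.} The reverse inclusion $\ker\rho_{\iota}\subseteq\mathcal{I}_{\iota}$ is the difficult step. An element $c$ with $q_{\iota}(\rho(c))=0$ need not be $p_{\iota}$-close to $\ker\rho$, since $\ker\rho$ only records the vanishing of all $q_{\kappa}$ simultaneously. My first attempt would be to use the hypothesis that the same index set controls both sides, with $q_{\iota}\circ\rho\leq p_{\iota}$ for every $\iota$. I would take $c$ with $\rho_{\iota}(\pi_{\iota}^{\mathcal{C}}(c))=0$ and try to correct it by elements of $\ker p_{\iota}$ to land in $\mathcal{I}$. This would use the surjectivity of all the $\rho_{\kappa}$ for $\kappa\geq\iota$ together with a Mittag-Leffler/inverse-limit lifting argument, which works well in the Fréchet (countable $\Omega$) case. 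If that fails, I would note that in the intended application $\rho$ comes from the universal property of Definition \ref{UP-local} and is compatible levelwise with the quotients, so that $\ker\rho_{\iota}=\mathcal{I}_{\iota}$ can be verified directly there.
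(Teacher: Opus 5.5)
Your three-step outline (level maps $\rho_\iota$, factoring through $\mathcal{C}_\iota/\mathcal{I}_\iota$, inverse limit) is the same as the paper's, and Steps 1 and 3 are fine. The gap is the one you flag in Step 2, and it cannot be closed. The inclusion $\ker\rho_\iota\subseteq\mathcal{I}_\iota$ does not follow from the hypotheses, even for finite $\Omega$, so no Mittag-Leffler argument can supply it.

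For a counterexample, take $\Omega=\{1,2\}$ with $1\leq 2$, $\mathcal{C}=\mathcal{D}=\mathbb{C}^2$, $p_1=p_2=q_2$ the max-norm, $q_1(a,b)=|a|$, and $\rho=\mathrm{id}$. Every hypothesis holds, $\mathcal{I}=\{0\}$ and $\sigma=\mathrm{id}$. So $\widetilde{\rho}$ is forced to be the identity, which is not local isometric: $q_1((0,1))=0\neq 1=\widehat{p}_1((0,1)+\mathcal{I})$. At level $1$, $\ker\rho_1=\{(0,b)\}$ while $\mathcal{I}_1=\{0\}$.

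Likewise, restriction $C([0,1])\rightarrow C([0,1))$, with $p_n=\Vert\cdot\Vert_\infty$ for all $n$ and $q_n(g)=\sup_{t\in[0,1-1/n]}|g(t)|$, satisfies all hypotheses. It is injective but not onto, so even the $\ast$-isomorphism claim fails.

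The underlying problem is that $q_\iota\circ\rho\leq p_\iota$ only yields $q_\iota(\rho(c))\leq\widehat{p}_\iota(c+\mathcal{I})$, whereas the conclusion needs equality. The paper's proof has the same hole. It goes from ``$\mathcal{I}_\iota$ is a closed ideal'' straight to a $C^\ast$-isomorphism $\widetilde{\rho}_\iota$ with $\widetilde{\rho}_\iota\circ\sigma_\iota=\rho_\iota$, and never checks that $\ker\rho_\iota=\mathcal{I}_\iota$.

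What is missing is an extra hypothesis, e.g.\ that $q_\iota(\rho(c))=0$ implies $c\in\ker\rho+\ker p_\iota$. Then $\ker\rho_\iota=\pi^{\mathcal{C}}_\iota(\mathcal{I})$, each $\widetilde{\rho}_\iota$ is injective hence isometric, and your Step 3 completes the proof.

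Your fallback is right that this holds where the lemma is used (Theorem \ref{R-Shilov boundary}), but it names no mechanism. The mechanism is the $\mathcal{R}$-module structure rather than the universal property. There $\rho$ is an $\mathcal{R}$-map and $P_\iota=P_\iota 1\in\mathcal{R}\mathcal{V}\subseteq\mathcal{C}$. On both sides the $\iota$-th seminorm is $\Vert x\Vert_\iota=\Vert x\upharpoonright_{\mathcal{H}_\iota}\Vert$, which vanishes iff $xP_\iota=0$. Hence if $\Vert\rho(c)\Vert_\iota=0$, then $\rho(cP_\iota)=\rho(c)P_\iota=0$, so $cP_\iota\in\ker\rho$, while $\Vert c-cP_\iota\Vert_\iota=0$. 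That is exactly the ``correction by an element of $\ker p_\iota$'' you were after, obtained algebraically rather than by a limiting argument.
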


\begin{proof}
Since $\rho :\mathcal{C\rightarrow D}$ is a local contractive $\ast $%
-morphism with dense range such that $q_{\iota }\left( \rho (c)\right) \leq
p_{\iota }(c)$ for all $c\in \mathcal{C}$ and all $\iota \in \Omega ,$ for
each $\iota \in \Omega $, there exists a surjective $C^{\ast }$-morphism $%
\rho _{\iota }:\mathcal{C}_{\iota }\rightarrow \mathcal{D}_{\iota }$ such
that 
\begin{equation*}
\rho _{\iota }\left( \rho _{\iota }^{\mathcal{C}}(c)\right) =\rho _{\iota }^{%
\mathcal{D}}\left( \rho (c)\right) \ 
\end{equation*}%
for all $c\in C$. For each $\iota \in \Omega ,$ $\mathcal{I}_{\iota }=%
\overline{\rho _{\iota }^{\mathcal{C}}(\mathcal{I})}$ is a closed two-sided $%
\ast $-ideal of $\mathcal{C}_{\iota }$, and so, there exists a unique $%
C^{\ast }$-isomorphism $\widetilde{\rho }_{\iota }:\mathcal{C}_{\iota }/%
\mathcal{I}_{\iota }\rightarrow \mathcal{D}_{\iota }$ such that $\ 
\widetilde{\rho }_{\iota }\circ \sigma _{\iota }=\rho _{\iota }.$

\begin{equation*}
\begin{tikzcd} {C_{\iota}} && {D_{\iota}} \\ & {C_{\iota}/J_{\iota}}
\arrow["{\rho_{\iota}}", from=1-1, to=1-3] \arrow["{\sigma_{\iota}}"',
from=1-1, to=2-2] \arrow["{\widetilde{\rho}_{\iota}}", dashed, from=2-2,
to=1-3] \end{tikzcd}
\end{equation*}

Now, let $\iota _{1},\iota _{2}\in \Omega $ with $\iota _{1}\leq \iota _{1}.$
Since 
\begin{equation*}
\begin{aligned} \rho_{\iota_{1}\iota_{2}}^{D} \big(
\widetilde{\rho}_{\iota_{2}} ( \sigma_{\iota_{2}}(c_{\iota_{2}}) ) \big) &=
\rho_{\iota_{1}\iota_{2}}^{D} \big( \rho_{\iota_{2}} ( c_{\iota_{2}} ) \big)
\\ &= \rho_{\iota_{1}} \big( \rho_{\iota_{1}\iota_{2}}^{C} ( c_{\iota_{2}} )
\big) \\ &= \widetilde{\rho}_{\iota_{1}} \big( \sigma_{\iota_{1}} (
\rho_{\iota_{1}\iota_{2}}^{C} ( c_{\iota_{2}} ) ) \big) \\ &=
\widetilde{\rho}_{\iota_{1}} \big( \rho_{\iota_{1}\iota_{2}}^{C/J} (
\sigma_{\iota_{2}}( c_{\iota_{2}} ) ) \big), \quad (\forall)\ c_{\iota_{2}}
\in C_{\iota_{2}} \end{aligned}
\end{equation*}%
it follows that the family $\{\widetilde{\rho }_{\iota }\}_{\iota \in \Omega
}$ forms an inverse system of $C^{\ast }$-isomorphisms. Similarly, the
family $\{\widetilde{\rho }_{\iota }^{-1}\}_{\iota \in \Omega }$ also forms
an inverse system of $C^{\ast }$-isomorphisms. Let $\widetilde{\rho }
:=\varprojlim\limits_{\iota}\widetilde{\rho }_{\iota }$ and 
$\widetilde{\rho }^{-1}:=\varprojlim\limits_{\iota}
\widetilde{\rho }_{\iota }^{-1}.$ Then $\widetilde{\rho }$ and $\widetilde{%
\rho }^{-1}$are local isometric $\ast $-morphisms. Since $\widetilde{\rho }%
\circ \widetilde{\rho }^{-1}=$id$_{\mathcal{D}}$ and $\widetilde{\rho }%
^{-1}\circ \widetilde{\rho }=$id$_{\overline{\mathcal{C}/\mathcal{I}}}$, it
follows that $\widetilde{\rho }$ and $\widetilde{\rho }^{-1}$ are local
isometric $\ast $-isomorphisms.

\begin{equation*}
\begin{tikzcd} C && D \\ & {\overline{C/J}} \arrow["\rho", from=1-1, to=1-3]
\arrow["\sigma"', from=1-1, to=2-2] \arrow["{\widetilde{\rho}}"', dashed,
from=2-2, to=1-3] \end{tikzcd}
\end{equation*}
\end{proof}

\begin{theorem}
\label{R-Shilov boundary} Any unital local operator space $\mathcal{V}\subseteq
C^{\ast }(\mathcal{D}_{\mathcal{E}})$ has the local $\mathcal{R}$-Shilov
boundary.
\end{theorem}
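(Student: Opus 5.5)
The plan is to mimic Hamana's argument: the Shilov ideal will be the kernel of the canonical morphism from $\mathcal{C}$ onto the local $\mathcal{R}$-$C^{\ast}$-envelope. Let $\mathcal{C}\subseteq C^{\ast}(\mathcal{D}_{\mathcal{E}})$ be the unital locally $C^{\ast}$-algebra generated by $\mathcal{RV}$. Then $(\mathcal{C},j)$, with $j$ the inclusion of $\mathcal{V}$, is a local $\mathcal{R}$-$C^{\ast}$-extension of $\mathcal{V}$.

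\textbf{Step 1: the candidate ideal.} Let $(C_{e\mathcal{R}}^{\ast}(\mathcal{V}),\kappa)$ be the local $\mathcal{R}$-$C^{\ast}$-envelope given by Theorem \ref{Th-C^*-env}. By Definition \ref{UP-local} there is a unique unital local contractive $\mathcal{R}$-$\ast$-morphism $\rho:\mathcal{C}\to C_{e\mathcal{R}}^{\ast}(\mathcal{V})$ with dense range and $\rho\circ j=\kappa$. Put $\mathcal{J}:=\ker\rho$, which is a closed two-sided $\ast$-ideal of $\mathcal{C}$.

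To apply Lemma \ref{LEM} we need $q_{\iota}(\rho(c))\le p_{\iota}(c)$ for every index $\iota$. Both algebras carry the seminorms $\|\cdot\|_{\iota}$ coming from $C^{\ast}(\mathcal{D}_{\mathcal{E}})$; for the envelope this is via Remark \ref{Corollary}, since it sits inside $\mathrm{Im}(\varphi)$ for a minimal $\mathcal{R}$-projection. In the construction of Theorem \ref{Th-C^*-env}, $\rho$ is the restriction of an $\mathcal{R}$-map in $B_1^{\mathcal{R}}$, so Remark \ref{R. 3.3.}(1) gives this index-wise contractivity.

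Lemma \ref{LEM} then yields a local isometric $\ast$-isomorphism $\widetilde\rho:\overline{\mathcal{C}/\mathcal{J}}\to C_{e\mathcal{R}}^{\ast}(\mathcal{V})$ with $\widetilde\rho\circ\sigma=\rho$. Hence $\sigma|_{\mathcal{V}}=\widetilde\rho^{-1}\circ\kappa$, a composition of local complete isometries. Applying $\widetilde\rho_\iota$ levelwise on matrices, $\widetilde\rho$ is automatically local completely isometric, being a $\ast$-isomorphism at each level. So $\mathcal{J}$ is a local boundary $\mathcal{R}$-ideal.

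\textbf{Step 2: maximality.} Let $\mathcal{I}$ be any local boundary $\mathcal{R}$-ideal, with quotient map $\sigma_{\mathcal{I}}:\mathcal{C}\to\overline{\mathcal{C}/\mathcal{I}}$. I would show that $(\overline{\mathcal{C}/\mathcal{I}},\sigma_{\mathcal{I}}|_{\mathcal{V}})$ is a local $\mathcal{R}$-$C^{\ast}$-extension of $\mathcal{V}$.
\begin{itemize}
\item $\sigma_{\mathcal{I}}|_{\mathcal{V}}$ is local completely isometric by assumption.
\item $\overline{\mathcal{C}/\mathcal{I}}$ is generated as a locally $C^{\ast}$-algebra by $\mathcal{R}\sigma_{\mathcal{I}}(\mathcal{V})=\sigma_{\mathcal{I}}(\mathcal{RV})$, since $\sigma_{\mathcal{I}}$ has dense range and $\mathcal{C}$ is generated by $\mathcal{RV}$.
\item The $\mathcal{R}$-module structure on the quotient is inherited: $P_\iota\in\mathcal{C}$ acts through $\sigma_{\mathcal{I}}(P_\iota)$.
\end{itemize}
The universal property then gives a local contractive $\mathcal{R}$-$\ast$-morphism $\pi:\overline{\mathcal{C}/\mathcal{I}}\to C_{e\mathcal{R}}^{\ast}(\mathcal{V})$ with $\pi\circ\sigma_{\mathcal{I}}|_{\mathcal{V}}=\kappa$.

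The maps $\pi\circ\sigma_{\mathcal{I}}$ and $\rho$ are continuous $\ast$-morphisms on $\mathcal{C}$. They agree on $\mathcal{V}$, and both are $\mathcal{R}$-maps sending $P_\iota$ to the same element $P_\iota\cdot 1$. Therefore they agree on $\mathcal{RV}$, hence on the $\ast$-algebra it generates, and by continuity on $\mathcal{C}$. So $\mathcal{I}\subseteq\ker(\pi\circ\sigma_{\mathcal{I}})=\ker\rho=\mathcal{J}$, and $\mathcal{J}$ is the local $\mathcal{R}$-Shilov boundary.

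\textbf{Main obstacle.} I expect the hardest step to be the bookkeeping of $\mathcal{R}$-structures and seminorm families in Step 2 and in the hypothesis of Lemma \ref{LEM}. Concretely, we must check two things:
\begin{itemize}
\item $\overline{\mathcal{C}/\mathcal{I}}$ qualifies as a local $\mathcal{R}$-extension, even though the definition asks for a module inside $C^{\ast}(\mathcal{D}_{\mathcal{E}})$. I would realize it there via a local isometric representation (Theorem of \cite{D1} together with the graded-domain embedding Remark), or simply read the definition abstractly, as the envelope definition already allows different locally $C^{\ast}$-structures.
\item $\rho$ is contractive index by index and not merely locally contractive. For this I would track the explicit construction of $\rho$ as the restriction of $\widetilde\pi\in B_1^{\mathcal{R}}$ and use Remark \ref{R. 3.3.}(1).
\end{itemize}
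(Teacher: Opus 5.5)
Your proposal is correct and follows the paper's proof essentially step for step ($\mathcal{J}=\ker\rho$ for the canonical morphism $\rho:\mathcal{C}\to C_{e\mathcal{R}}^{\ast}(\mathcal{V})$, Lemma~\ref{LEM} to get $\sigma\upharpoonright_{\mathcal{V}}=\widetilde{\rho}^{-1}\circ\kappa$, and the universal property for maximality); the only difference is that you map $\overline{\mathcal{C}/\mathcal{I}}$ directly into $C_{e\mathcal{R}}^{\ast}(\mathcal{V})$ rather than into $\overline{\mathcal{C}/\mathcal{J}_{\mathcal{R}}}$, and you explicitly verify $\pi\circ\sigma_{\mathcal{I}}=\rho$ on all of $\mathcal{C}$, which the paper uses without comment when it writes $x+\mathcal{J}_{\mathcal{R}}=\varphi(x+\mathcal{I})$ for $x\in\mathcal{I}$. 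One caution on Step 1: what actually makes the levelwise maps $\widetilde{\rho}_{\iota}$ of Lemma~\ref{LEM} injective is that the induced level-$\iota$ map has kernel $\pi_{\iota}^{\mathcal{C}}(\ker\rho)$, which the index-wise bound $\Vert\rho(c)\Vert_{\iota}\leq\Vert c\Vert_{\iota}$ alone does not guarantee, but which holds here because $\rho(cP_{\iota})=\rho(c)P_{\iota}$ (if $\rho(c)P_{\iota}=0$ then $cP_{\iota}\in\ker\rho$ and $\pi_{\iota}^{\mathcal{C}}(cP_{\iota})=\pi_{\iota}^{\mathcal{C}}(c)$), so your argument goes through.
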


\begin{proof}
Let $\left( C_{e\mathcal{R}}^{\ast }\left( \mathcal{V}\right) ,\kappa
\right) $ be the local $\mathcal{R}-C^{\ast }$-envelope of $\mathcal{V}$,
and let $\mathcal{C}\subseteq C^{\ast }(\mathcal{D}_{\mathcal{E}})$ be the
locally $C^{\ast }$-algebra generated by $\mathcal{RV}$. Since the pair $\left( 
\mathcal{C},i_{\mathcal{V}}\right) ,$ where $i_{\mathcal{V}}$ is the
inclusion $\mathcal{V\hookrightarrow C},$ is a local $\mathcal{R}$-$C^{\ast
} $-extension of $\mathcal{V}$, there exists a unique local contractive $%
\mathcal{R}-\ast $-morphism $\rho :\mathcal{C}\rightarrow C_{e\mathcal{R}%
}^{\ast }\left( \mathcal{V}\right) $ with dense range such that $\rho
\upharpoonright _{\mathcal{V}}=\kappa .$

Let $\mathcal{J}_{\mathcal{R}}:=\ker \rho .$ Clearly, $\mathcal{J}_{\mathcal{R}}$ is a
closed two-sided $\ast $-ideal of $\mathcal{C}$ and  an $\mathcal{R}$%
-module. We claim that $\mathcal{J}_{\mathcal{R}}$ is the local $\mathcal{R}$-Shilov
boundary for $\mathcal{V}$.

Since $\rho $ is an $\mathcal{R}$-map, we have $\Vert \rho (c)\Vert _{\iota
}\leq \Vert c\Vert _{\iota }$ for all $\iota \in \Omega $ and for all $c\in
C $. By Lemma \ref{LEM}, there exists a unital local isometric $\ast $%
-isomorphisms $\widetilde{\rho }$ such that the following diagram commutes%
\begin{equation*}
\begin{tikzcd} \mathcal{V} && C^*_{eR} \\ {C=C^*(\mathcal{V})} && {\overline{\mathcal{C}/\mathcal{J}_{\mathcal{R}}}}
\arrow["\kappa", from=1-1, to=1-3] \arrow[hook, from=1-1, to=2-1]
\arrow["\rho"', dashed, from=2-1, to=1-3] \arrow["\sigma"', from=2-1, to=2-3]
\arrow["{\widetilde{\rho}}"', dashed, from=2-3, to=1-3] \end{tikzcd}.
\end{equation*}

Since $\widetilde{\rho }\circ \sigma \upharpoonright _{\mathcal{V}}=\rho
\upharpoonright _{\mathcal{V}}=\kappa $, it follows that $\sigma
\upharpoonright _{\mathcal{V}}=\widetilde{\rho }^{-1}\circ \kappa $ is a
unital local complete isometry, and so $\mathcal{J}_{\mathcal{R}}$ is a
local boundary $\mathcal{R}$-ideal for $\mathcal{V}$. Moreover, $\left( 
\overline{\mathcal{C}/\mathcal{J}_{\mathcal{R}}},\sigma \upharpoonright _{%
\mathcal{V}}\right) $ is a local $\mathcal{R}-C^{\ast }$-envelope of $%
\mathcal{V}$, since $\widetilde{\rho }:\overline{\mathcal{C}/\mathcal{J}_{%
\mathcal{R}}}\rightarrow C_{e\mathcal{R}}^{\ast }\left( \mathcal{V}\right) $
is a unital local isometric $\mathcal{R}$-$\ast $-isomorphism such that $%
\widetilde{\rho }\circ \sigma \upharpoonright _{\mathcal{V}}=\kappa .$

Let $\mathcal{I}\subseteq \mathcal{C}$ be any local boundary $\mathcal{R}$%
-ideal for $\mathcal{V}$, and let $\upsilon :\mathcal{C}\rightarrow \overline{%
\mathcal{C}/\mathcal{I}}$ be the canonical $\ast $-morphism. Since $\upsilon
\upharpoonright _{\mathcal{V}}$ is a local complete isometry, $\left( 
\overline{\mathcal{C}/\mathcal{I}},\upsilon \upharpoonright _{\mathcal{V}%
}\right) $ is a local $\mathcal{R}-C^{\ast }$-extension of $\mathcal{V}$.
Now, since $\left( \overline{\mathcal{C}/\mathcal{J}_{\mathcal{R}}},\sigma
\upharpoonright _{\mathcal{V}}\right) $ is a local $\mathcal{R}-C^{\ast }$%
-envelope of $\mathcal{V}$, there exists a unique local contractive $%
\mathcal{R}-\ast $-morphism $\varphi :\overline{\mathcal{C}/\mathcal{I}}%
\rightarrow \overline{\mathcal{C}/\mathcal{J}_{\mathcal{R}}}$ with dense
range such that $\varphi \circ \upsilon \upharpoonright _{\mathcal{V}%
}=\sigma \upharpoonright _{\mathcal{V}}.$

If $x\in \mathcal{I}$, then 
\begin{equation*}
x+\mathcal{J}_{\mathcal{R}}=\varphi \left( x+\mathcal{I}\right) =\varphi
\left( 0+\mathcal{I}\right) =0+\mathcal{J}_{\mathcal{R}}
\end{equation*}%
and so, $x\in \mathcal{J}_{\mathcal{R}}$. Consequently, $\mathcal{I}\subseteq 
\mathcal{J}_{\mathcal{R}}$. Hence, $\mathcal{J}_{\mathcal{R}}$ contains
all local boundary $\mathcal{R}$-ideals, and so $\mathcal{J}_{\mathcal{R}}$
is the local $\mathcal{R}$-Shilov boundary for $\mathcal{V}$.
\end{proof}

\begin{corollary}
Let $\mathcal{V}\subseteq C^{\ast }(\mathcal{D}_{\mathcal{E}})$ be a unital
local operator space. Then $\mathcal{I}_{\mathcal{R}}\left( \mathcal{V}%
\right) =\mathcal{I}_{\mathcal{R}}\left( C^{\ast }(\mathcal{V})\right) $ if
and only if $\mathcal{V}$ has trivial local $\mathcal{R}$-Shilov boundary.
\end{corollary}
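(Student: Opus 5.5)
The plan is to read ``$\mathcal{I}_{\mathcal{R}}(\mathcal{V})=\mathcal{I}_{\mathcal{R}}(C^{\ast}(\mathcal{V}))$'' as: $(\mathcal{I}_{\mathcal{R}}(\mathcal{V}),\kappa)$ is, up to a unital local complete order $\mathcal{R}$-isomorphism, also an injective $\mathcal{R}$-envelope of $\mathcal{C}=C^{\ast}(\mathcal{V})$, the unital locally $C^{\ast}$-algebra generated by $\mathcal{R}\mathcal{V}$. Here $\kappa$ is extended to a unital local complete isometry on $\mathcal{C}$. The argument runs through the map $\rho:\mathcal{C}\rightarrow C_{e\mathcal{R}}^{\ast}(\mathcal{V})$ from the proof of Theorem \ref{R-Shilov boundary}, using $\mathcal{J}_{\mathcal{R}}=\ker\rho$. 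I use the concrete model of the envelope suggested by the proof of Theorem \ref{Th-C^*-env} (the local analogue of Theorem \ref{1}). Take $\varphi\in\min\mathscr{E}_{\mathcal{V}}$ and $\mathcal{I}_{\mathcal{R}}(\mathcal{V})=\mathrm{Im}(\varphi)$ with the product $\odot_{\varphi}$ (Remark \ref{Corollary}). Then $C_{e\mathcal{R}}^{\ast}(\mathcal{V})$ is the locally $C^{\ast}$-subalgebra of $(\mathrm{Im}(\varphi),\odot_{\varphi})$ generated by $\mathcal{R}\mathcal{V}$.

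($\Leftarrow$) Assume $\mathcal{J}_{\mathcal{R}}=\{0\}$. Then $\sigma$ is the identity of $\mathcal{C}$, and Lemma \ref{LEM} turns $\rho$ into a local isometric $\mathcal{R}$-$\ast$-isomorphism of $\mathcal{C}$ onto $C_{e\mathcal{R}}^{\ast}(\mathcal{V})\subseteq\mathcal{I}_{\mathcal{R}}(\mathcal{V})$. Since $\rho$ is a $\ast$-morphism, each $\iota$-component is an injective $\ast$-homomorphism of $C^{\ast}$-algebras. The same then holds for every matrix amplification, so $\rho$ is local completely isometric. Thus $(\mathcal{I}_{\mathcal{R}}(\mathcal{V}),\rho)$ is a local $\mathcal{R}$-extension of $\mathcal{C}$, and its target is an injective $\mathcal{R}$-module. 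For rigidity, let $\psi$ be a unital local completely contractive $\mathcal{R}$-map on $\mathcal{I}_{\mathcal{R}}(\mathcal{V})$ fixing $\rho(\mathcal{C})$. Then $\psi$ fixes $\rho(\mathcal{V})=\kappa(\mathcal{V})$, so $\psi=\mathrm{id}$ by local $\mathcal{R}$-rigidity of $\mathcal{I}_{\mathcal{R}}(\mathcal{V})$ over $\mathcal{V}$. Hence $(\mathcal{I}_{\mathcal{R}}(\mathcal{V}),\rho)$ is an injective $\mathcal{R}$-envelope of $\mathcal{C}$, and the uniqueness part of Proposition \ref{Coro} gives $\mathcal{I}_{\mathcal{R}}(\mathcal{V})=\mathcal{I}_{\mathcal{R}}(\mathcal{C})$.

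($\Rightarrow$) Assume $\mathcal{I}_{\mathcal{R}}(\mathcal{V})=\mathcal{I}_{\mathcal{R}}(\mathcal{C})$. By Proposition \ref{Coro} and Lemma \ref{Lemma 2}, I may choose $\varphi\in\min\mathscr{E}_{\mathcal{C}}$ (so $\varphi\upharpoonright_{\mathcal{C}}=\mathrm{id}$). Then $\mathrm{Im}(\varphi)$ is an injective $\mathcal{R}$-module containing $\mathcal{C}$ and rigid over $\mathcal{C}$. By hypothesis and uniqueness, it is also rigid over $\mathcal{V}$, so it serves as $\mathcal{I}_{\mathcal{R}}(\mathcal{V})$ with $\kappa$ the inclusion. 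Since $\varphi(c^{\ast}c)=c^{\ast}c=\varphi(c)^{\ast}\varphi(c)$ for $c\in\mathcal{C}$, and likewise for $cc^{\ast}$, Theorem \ref{Mult.Dom}(3) gives $\mathcal{C}\subseteq\mathcal{M}_{\varphi}$. Hence $c_{1}\odot_{\varphi}c_{2}=\varphi(c_{1}c_{2})=c_{1}c_{2}$, so $\mathcal{C}$ is a locally $C^{\ast}$-subalgebra of $(\mathrm{Im}(\varphi),\odot_{\varphi})$ with its original product. The seminorms are inherited from $C^{\ast}(\mathcal{D}_{\mathcal{E}})$ in both cases. Therefore the locally $C^{\ast}$-algebra generated by $\mathcal{R}\kappa(\mathcal{V})$ inside $(\mathrm{Im}(\varphi),\odot_{\varphi})$ is exactly $\mathcal{C}$. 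So $C_{e\mathcal{R}}^{\ast}(\mathcal{V})=\mathcal{C}$, and the inclusion $\mathcal{C}\rightarrow C_{e\mathcal{R}}^{\ast}(\mathcal{V})$ is a unital local contractive $\mathcal{R}$-$\ast$-morphism with dense range extending $\kappa$. By the uniqueness clause of Definition \ref{UP-local}, $\rho$ is this inclusion, so $\mathcal{J}_{\mathcal{R}}=\ker\rho=\{0\}$.

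The main obstacle is the bookkeeping of ``up to isomorphism'' in ($\Rightarrow$). The envelope $C_{e\mathcal{R}}^{\ast}(\mathcal{V})$ is only unique up to local isometric $\mathcal{R}$-$\ast$-isomorphism (Corollary \ref{Corollary C^*-env}). So I must check that $\ker\rho$ does not depend on which model is used. This holds because any two choices of $\rho$ differ by composition with such an isomorphism, which is injective and therefore does not change the kernel. I must also check that the ambient model $\mathrm{Im}(\varphi)$ with $\varphi\in\min\mathscr{E}_{\mathcal{C}}$ legitimately realizes $\mathcal{I}_{\mathcal{R}}(\mathcal{V})$, which is where the hypothesis enters.
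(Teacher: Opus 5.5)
Your proof is correct and is essentially the argument the paper leaves implicit (the corollary is stated without proof after Theorem \ref{R-Shilov boundary}): $\mathcal{J}_{\mathcal{R}}=\ker\rho$ vanishes exactly when $\mathcal{C}$ itself is the locally $C^{\ast}$-algebra generated by $\mathcal{R}\kappa(\mathcal{V})$ inside $(\mathcal{I}_{\mathcal{R}}(\mathcal{V}),\odot)$, and rigidity passes between $\mathcal{V}$ and $\mathcal{C}$, with the equality read compatibly with $\kappa$ as you do. Two small simplifications: in $(\Leftarrow)$ the injectivity of each $\rho_{\iota}$ follows directly from $\rho(c)P_{\iota}=\rho(cP_{\iota})$ and $\ker\rho=\{0\}$, so you need not lean on Lemma \ref{LEM}; and in $(\Rightarrow)$ Theorem \ref{Mult.Dom}(3) is not needed, since $c_{1}c_{2}\in\mathcal{C}$ and $\varphi\upharpoonright_{\mathcal{C}}=\mathrm{id}$ already give $c_{1}\odot_{\varphi}c_{2}=c_{1}c_{2}$.
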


\begin{corollary}
\label{C. R-Shilov} Let $\mathcal{V}_{1}\subseteq C^{\ast }(\mathcal{D}_{%
\mathcal{E}})$ and $\mathcal{V}_{2}\subseteq C^{\ast }(\mathcal{D}_{\mathcal{%
E}})$ be two unital local operator spaces, and let $\mathcal{C}_{1}$ and $%
\mathcal{C}_{2}$ be the unital locally $C^{\ast }$-algebras generated by $%
\mathcal{RV}_{1}$ and $\mathcal{RV}_{2}$, respectively. If $\varphi :%
\mathcal{V}_{1}\rightarrow \mathcal{V}_{2}$ is a unital local complete order 
$\mathcal{R}$-isomorphism and both $\mathcal{V}_{1}$ and $\mathcal{V}_{2}$
have trivial local $\mathcal{R}$-Shilov boundary, then $\varphi $ extends
uniquely to a unital local isometric $\mathcal{R}-\ast $-isomorphism $\rho :%
\mathcal{C}_{1}\rightarrow \mathcal{C}_{2}.$
\end{corollary}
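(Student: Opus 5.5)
The plan is to show that when both Shilov boundaries are trivial, each $\mathcal{C}_{k}$, together with the inclusion of $\mathcal{V}_{k}$, is itself a local $\mathcal{R}-C^{\ast}$-envelope of $\mathcal{V}_{k}$. Once that is established, Proposition \ref{uniqueness envelopes} supplies the required isomorphism directly.

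\emph{Step 1: identifying the envelopes.} Let $\mathcal{J}_{\mathcal{R}}^{(k)}$ be the local $\mathcal{R}$-Shilov boundary of $\mathcal{V}_{k}$, $k=1,2$. The proof of Theorem \ref{R-Shilov boundary} shows that $\mathcal{J}_{\mathcal{R}}^{(k)}=\ker\rho_{k}$, where $\rho_{k}:\mathcal{C}_{k}\rightarrow C_{e\mathcal{R}}^{\ast}(\mathcal{V}_{k})$ is the canonical local contractive $\mathcal{R}-\ast$-morphism with dense range. The same proof shows that $\left(\overline{\mathcal{C}_{k}/\mathcal{J}_{\mathcal{R}}^{(k)}},\sigma_{k}\upharpoonright_{\mathcal{V}_{k}}\right)$ is a local $\mathcal{R}-C^{\ast}$-envelope of $\mathcal{V}_{k}$.

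Triviality means $\mathcal{J}_{\mathcal{R}}^{(k)}=\{0\}$. In that case the quotient seminorms $\widehat{p}_{\iota}$ coincide with $p_{\iota}$. Since $\mathcal{C}_{k}$ is complete, the canonical map $\sigma_{k}:\mathcal{C}_{k}\rightarrow\overline{\mathcal{C}_{k}/\{0\}}$ is a local isometric $\ast$-isomorphism. I would verify this at the level of each $C^{\ast}$-algebra $(\mathcal{C}_{k})_{\iota}$, using the inverse-limit description given before Definition \ref{Deff Shilov R-ideal}. It follows that $(\mathcal{C}_{k},i_{\mathcal{V}_{k}})$ is a local $\mathcal{R}-C^{\ast}$-envelope of $\mathcal{V}_{k}$. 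Equivalently, $\rho_{k}$ is an injective local contractive $\ast$-morphism, and Lemma \ref{LEM} makes it a local isometric $\ast$-isomorphism onto $C_{e\mathcal{R}}^{\ast}(\mathcal{V}_{k})$.

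\emph{Step 2: existence.} Apply Proposition \ref{uniqueness envelopes} with $\theta=\varphi$, $(\mathcal{D}_{1},\kappa_{1})=(\mathcal{C}_{1},i_{\mathcal{V}_{1}})$ and $(\mathcal{D}_{2},\kappa_{2})=(\mathcal{C}_{2},i_{\mathcal{V}_{2}})$. This gives a unique unital local isometric $\mathcal{R}-\ast$-isomorphism $\rho:\mathcal{C}_{1}\rightarrow\mathcal{C}_{2}$ with $\rho\circ i_{\mathcal{V}_{1}}=i_{\mathcal{V}_{2}}\circ\varphi$. In other words, $\rho$ extends $\varphi$.

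\emph{Step 3: uniqueness.} Suppose $\rho'$ is another unital $\mathcal{R}-\ast$-morphism extending $\varphi$. Then $\rho$ and $\rho'$ agree on $\mathcal{V}_{1}$. Being $\mathcal{R}$-maps and $\ast$-morphisms, they also agree on the $\ast$-algebra generated by $\mathcal{R}\mathcal{V}_{1}$. This algebra is dense in $\mathcal{C}_{1}$, and both maps are continuous since they are local isometric. Hence $\rho=\rho'$. Alternatively, uniqueness already follows from the uniqueness clause of Proposition \ref{uniqueness envelopes}.

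The main obstacle is Step 1: confirming that a trivial Shilov ideal really makes $(\mathcal{C}_{k},i_{\mathcal{V}_{k}})$ satisfy the universal property of Definition \ref{UP-local}, rather than only being isomorphic to something that does. I would handle this by transporting the universal property along the local isometric $\mathcal{R}-\ast$-isomorphism $\widetilde{\rho}_{k}\circ\sigma_{k}=\rho_{k}:\mathcal{C}_{k}\rightarrow C_{e\mathcal{R}}^{\ast}(\mathcal{V}_{k})$. This map intertwines $i_{\mathcal{V}_{k}}$ with $\kappa_{k}$, so any morphism into $C_{e\mathcal{R}}^{\ast}(\mathcal{V}_{k})$ composes with $\rho_{k}^{-1}$ to give the required morphism into $\mathcal{C}_{k}$.
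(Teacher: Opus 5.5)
Your proposal is correct and is essentially the paper's proof. Triviality of the local $\mathcal{R}$-Shilov boundaries makes $(\mathcal{C}_{k},i_{\mathcal{V}_{k}})$ local $\mathcal{R}-C^{\ast}$-envelopes of $\mathcal{V}_{k}$, and Proposition \ref{uniqueness envelopes} with $\theta=\varphi$ then gives the unique unital local isometric $\mathcal{R}-\ast$-isomorphism $\rho$ with $\rho\upharpoonright_{\mathcal{V}_{1}}=\varphi$. The only differences are that the paper simply asserts your Step 1, which you justify by taking $\mathcal{J}_{\mathcal{R}}=\{0\}$ in the proof of Theorem \ref{R-Shilov boundary}, and that your Step 3 is already covered by the uniqueness clause of that proposition.
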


\begin{proof}
Since $\mathcal{V}_{1}$ and $\mathcal{V}_{2}$ have trivial local $\mathcal{R}
$-Shilov boundaries, the pairs $\left( \mathcal{C}_{1},i_{\mathcal{V}_{1}}\right) $ and $%
\left( \mathcal{C}_{2},i_{V_{2}}\right) $ are local $\mathcal{R}-C^{\ast }$%
-envelopes of $\mathcal{V}_{1}$ and $\mathcal{V}_{2}$, respectively. By 
Proposition \ref{uniqueness envelopes}, there exists a unique unital local
isometric $\mathcal{R}-\ast $-isomorphism $\rho :\mathcal{C}_{1}\rightarrow 
\mathcal{C}_{2}$ such that $\rho \upharpoonright _{\mathcal{V}_{1}}=\varphi
. $%
\begin{equation*}
\begin{tikzcd} {\mathcal{V}_{1}} && {\mathcal{V}_{2}} \\ {\mathcal{C}_{1}=C^*(\mathcal{V}_{1})} &&
{\mathcal{C}_{2}=C^*(\mathcal{V}_{2})} \arrow["\varphi", from=1-1, to=1-3] \arrow[hook,
from=1-1, to=2-1] \arrow[hook, from=1-3, to=2-3] \arrow["\rho", from=2-1,
to=2-3] \end{tikzcd}
\end{equation*}
\end{proof}

\begin{example}
Let $\{\mathcal{H};\mathcal{E}=\{\mathcal{H}_{n}\}_{n\geq 1};\mathcal{D}_{%
\mathcal{E}}\}$ be a Fr\'{e}chet quantized domain in the Hilbert space $%
\mathcal{H}$, and let $\mathcal{V}\subseteq C^{\ast }(\mathcal{D}_{\mathcal{E}})$
be a unital local operator space. If $\mathcal{V}$ has trivial local $%
\mathcal{R}$-Shilov boundary and $K^{\ast }(\mathcal{D}_{\mathcal{E}%
})\subseteq C^{\ast }\left( \mathcal{V}\right) $, then $\mathcal{I}_{%
\mathcal{R}}(\mathcal{V})=$ $C^{\ast }(\mathcal{D}_{\mathcal{E}})$.
\end{example}





\subsection*{Funding} This research did not receive any specific grant from funding agencies in the public, commercial, or not-for-profit sectors.

\subsection*{Data Availability} This paper has no associated data.
\subsection*{Declarations} The authors have no relevant financial or non-financial interests to disclose.

\end{document}